\newcommand{\displaybump}{\hbox to \@totalleftmargin{\hfil}}
\DeclareMathOperator\stab{stab}
\DeclareMathOperator\Sym{Sym}
\DeclareMathOperator\End{End}
\DeclareMathOperator\Aut{Aut}
\DeclareMathOperator\bik{bik}
\DeclareMathOperator\out{out}
\DeclareMathOperator\desc{desc}
\DeclareMathOperator\parb{par}
\DeclareMathOperator\id{id}
\DeclareMathOperator\bbN{\mathbb{N}}
\DeclareMathOperator\bbQ{\mathbb{Q}}
\DeclareMathOperator\bbZ{\mathbb{Z}}
\DeclareMathOperator\calP{\mathcal{P}}
\DeclareMathOperator\calL{\mathcal{L}}
\theoremstyle{definition}
\newtheorem{theorem}{Theorem}[section]
\newtheorem*{theorem*}{Theorem}
\newtheorem{lemma}[theorem]{Lemma}
\newtheorem*{lemma*}{Lemma}
\newtheorem{corollary}[theorem]{Corollary}
\newtheorem{corollary*}{Corollary}
\newtheorem{proposition}[theorem]{Proposition}
\newtheorem*{proposition*}{Proposition}
\newtheorem*{definition*}{Definition}
\newtheorem{remark}[theorem]{Remark}
\newtheorem*{remark*}{Remark}
\newtheorem*{example*}{Example}
\title{Willis Theory via Graphs}
\author{Timothy P. Bywaters \\ The University of Sydney \\ School of Mathematics and Statistics \\ NSW 2006, Australia \\ t.bywaters@maths.usyd.edu.au \and Stephan Tornier \\ The University of Newcastle \\ School of Mathematical and Physical Sciences \\ University Drive, Callaghan, NSW 2308, Australia \\ stephan.tornier@newcastle.edu.au}
\date{\today}
\begin{document}

\maketitle
\begin{abstract}
We study the scale and tidy subgroups of an endomorphism of a totally disconnected locally compact group using a geometric framework. This leads to new interpretations of tidy subgroups and the scale function. Foremost, we obtain a geometric tidying procedure which applies to endomorphisms as well as a geometric proof of the fact that tidiness is equivalent to being minimizing for a given endomorphism. Our framework also yields an endomorphism version of the Baumgartner-Willis tree representation theorem. We conclude with a construction of new endomorphisms of totally disconnected locally compact groups from old via HNN-extensions.
\end{abstract}

\vspace{0.6cm}
\noindent
Keywords. Totally disconnected locally compact groups, scale function, tidy subgroups, endomorphisms of groups, permutation groups, groups acting on graphs.

\vspace{0.2cm}
\noindent
2000 Mathematics Subject Classification. Primary 22D05, Secondary 05C25.

\section{Introduction}

The study of totally disconnected locally compact (t.d.l.c.) groups has its origin in the desire to understand general locally compact groups, which are ubiquitous in mathematics. Any locally compact group $G$ may be viewed as an extension of its connected component $G_{0}$ by the totally disconnected quotient $G/G_{0}$.

Every connected locally compact group is an inverse limit of Lie groups by the solution of Hilbert's fifth problem due to Gleason \cite{Gle52}, Yamabe \cite{Yam53}, Montgomery--Zippin \cite{MZ52} and others. Therefore, totally disconnected locally compact groups have become the new focus.

\vspace{0.2cm}

Little was known beyond van Dantzig's existence theorem of an identity neighbourhood basis consisting of compact open subgroups \cite{vDa31} until Willis raised the hope for a general structure theory of t.d.l.c. groups through the introduction of new concepts around their automorphisms \cite{Wil94}, \cite{Wil01}. Aside from developing the structure theory of totally disconnected locally compact groups \cite{Wil04}, \cite{BW06}, \cite{Wil07}, \cite{BRW12}, this theory has had many unexpected applications, including to the fields of random walks and ergodic theory \cite{DSW06}, \cite{JRW96}, \cite{PW03}, arithmetic groups \cite{SW13} and Galois theory \cite{CH09}.

It is hence natural to search for the largest natural framework of Willis theory. In \cite{Wil15}, the foundations of Willis theory have been extended from automorphisms to endomorphisms. Some work has been done to generalize existing results accordingly, see \cite{BGT16}. The present article contributes to this scheme: We characterize tidy subgroups and the scale function for endomorphisms of t.d.l.c. groups via graphs and permutations and thereby generalize work of M{\"o}ller \cite{Moe02} for the case of automorphisms.
We use these characterizations to give alternate proofs of fundamental results concerning the scale and tidy subgroups. The reader is referred to Section~\ref{sec:preliminaries} for an introduction to Willis theory, including the concepts of tidy subgroups and the scale function, as well as fundamentals about directed graphs.

\vspace{0.2cm}
Let $G$ be a t.d.l.c. group. Further, let $\alpha$ be a continuous endomorphism of $G$ and $U$ a compact open subgroup of $G$. Using a certain graph associated to the data $(G,\alpha,U)$ we give a geometric proof of existence of a subgroup of $U$ which is tidy above for $\alpha$ (\cite[Proposition 3]{Wil15}), as well as the tidiness below condition (\cite[Proposition 8]{Wil15}). Combining both yields the following characterization of the scale and tidiness, resembling \cite[Lemma 3.1]{Moe02} and \cite[Theorem 3.4]{Moe02}, see Lemma~\ref{lem:finite_vertices_case} and Theorem \ref{thm:tidy_tree}.

For $i\!\in\!\bbN_{0}$, define $v_{-i}\!:=\!\alpha^{-i}(U)\!\in\!\calP(G)$ and a rooted, directed graph $\Gamma_{+}$ by
\begin{displaymath}
  V(\Gamma_{+})\!=\!\{uv_{-i}\mid u\in U,\ i\in\bbN_{0}\},\quad E(\Gamma_{+})\!=\!\{(uv_{-i},uv_{-i-1})\mid u\in U,\ i\in\bbN_{0}\}.
\end{displaymath}

\begin{theorem*}
Let $G$ be a t.d.l.c. group, $\alpha\in\End(G)$ and $U\le G$ compact open.
\begin{itemize}[leftmargin=1cm]
 \item[(i)] If $\{v_{-i}\mid i\in\bbN_{0}\}$ is finite, then there is a compact open subgroup $U$ of $G$ with $\alpha(U)\le U$ and which is tidy for $\alpha$ and $s(\alpha)=1$.
 \item[(ii)] If $\{v_{-i}\mid i\in\bbN_{0}\}$ is infinite, then $U$ is tidy for $\alpha$ if and only if the graph $\Gamma_{+}$ is a directed tree, rooted at $v_{0}$ with constant in-valency (excluding the root) equal to $1$ and constant out-valency. In this case, $s(\alpha)$ equals said out-valency.
\end{itemize}
\end{theorem*}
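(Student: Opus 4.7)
The plan is to decompose the theorem into its two cases, with each reducing to an interplay between the combinatorial structure of $\Gamma_+$ and the algebraic notion of tidiness (tidy above plus tidy below, combined with the minimizing property $s(\alpha)=[\alpha(U):\alpha(U)\cap U]$).

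For part (i), I would exploit stabilization. Finiteness of $\{v_{-i}\}$ forces the descending chain of preimage subgroups $\alpha^{-i}(U)$ to stop, giving some $n$ with $v_{-n}=v_{-n-1}$; equivalently, $\alpha(v_{-n})\le v_{-n}$. Setting $V:=U\cap v_{-n}$ produces a compact open subgroup satisfying $\alpha(V)\le V$. Then $[\alpha(V):\alpha(V)\cap V]=1$, so by the infimum definition of the scale we get $s(\alpha)=1$, and forward-invariance of $V$ makes it automatically tidy both above and below.

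For part (ii) I must prove the biconditional in both directions. In the forward direction, assume $U$ is tidy and read off the graph conditions. The tree property amounts to the statement that distinct level-$i$ cosets do not share a level-$(i+1)$ child, which is equivalent to $U\cap v_{-i-1}\le v_{-i}$ for all $i$; this follows from the tidy-above decomposition $U=U_+U_-$ together with the compatibility of $\alpha$ on $U_+$. In-valency one at non-root vertices is then automatic from the rooted, directed tree shape. Constant out-valency is extracted from tidy below: at the vertex $uv_{-i}$ the out-valency equals $[U\cap v_{-i}:U\cap v_{-i-1}]$ (after left-translating $u$ to the identity), and tidy below forces these indices to coincide with $[\alpha(U):\alpha(U)\cap U]$, which in turn equals $s(\alpha)$ by minimality.

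In the backward direction, assume the graph-theoretic conditions and build the tidy-above and tidy-below properties. The tree structure forces the coset nesting $U\cap v_{-i-1}\le v_{-i}$ at every level, hence a coherent descending family of subgroups whose intersection plays the role of $U_+$ and yields the decomposition $U=U_+U_-$. The uniform out-valency pins the indices $[U\cap v_{-i}:U\cap v_{-i-1}]$ to a single constant $d$, and combined with the tree shape this uniformity is meant to translate into the topological closedness defining tidy below; one then computes $s(\alpha)=[\alpha(U):\alpha(U)\cap U]=d$ directly. The hardest step will be precisely this conversion of a combinatorial branching uniformity into the analytic closedness of $\bigcup_n\alpha^n(U_+)$, which requires exploiting the geometry of $\Gamma_+$ globally by tracking representatives along infinite rays to obtain uniform control of $\alpha$-iterates inside $G$.
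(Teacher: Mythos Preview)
Your proposal reverses the roles of tidy above and tidy below in part (ii), and this makes both directions fail. In the forward direction you derive the tree property (in-valency $1$) from the decomposition $U=U_+U_-$, but this does not work: the condition at $v_{-i}$ is $U\cap\alpha^{-i}(U)\subseteq\alpha^{-i+1}(U)$, and writing $u=u_+u_-$ handles $u_-$ but leaves you needing $\alpha^{i-1}(u_+)\in U$ given only $\alpha^{i}(u_+)\in U$ and $u_+\in U_+$. The paper obtains this from tidy \emph{below}, via $U_{++}\cap U=U_+$ (Lemma~\ref{lem:tidy_below}\ref{item:lem:tidy_below:U_+}): then $\alpha^{i}(u_+)\in U_{++}\cap U=U_+\subseteq\alpha(U)$, contradicting $u_+\notin\alpha^{-i+1}(U)$. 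Constant out-valency, by contrast, is a consequence of tidy \emph{above} alone (Proposition~\ref{prop:out_index}). The backward direction is inverted in the same way: the tree structure yields $U_{--}\cap U=U_-$ directly (an element of $U$ fixing $v_{-n}$ for large $n$ must fix every $v_{-i}$ by following unique parents back to the root), hence tidy below by Lemma~\ref{lem:tidy_below}; tidy above is then obtained from constant out-valency by showing $|Uv_{-1}|=|U_iv_{-1}|$ for all $i$ and passing to the limit $|Uv_{-1}|=|U_+v_{-1}|$ via compactness, which is one of the equivalent conditions in Theorem~\ref{thm:tidy_above}. The closedness of $\bigcup_n\alpha^n(U_+)=U_{++}$ that you flag as the hardest step is neither the definition of tidy below used here (that concerns $U_{--}$) nor required in the argument.

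Two smaller issues. In part (i) the sets $\alpha^{-i}(U)$ do not form a descending chain, and $V:=U\cap\alpha^{-n}(U)$ need not satisfy $\alpha(V)\le V$; you must take the full intersection $V=U_-=\bigcap_{k\ge 0}\alpha^{-k}(U)$, which under the hypothesis is a finite intersection of open subgroups. And the identification $s(\alpha)=[\alpha(U):\alpha(U)\cap U]$ for tidy $U$ is not a direct computation but the content of Theorem~\ref{thm:tidy_minimizing}, whose proof in the paper requires the tidying procedure of Section~\ref{sec:tidying_procedure}; invoking it as ``minimality'' at this stage is either a substantial forward reference or circular.
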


We use this theorem to establish a new, geometric tidying procedure for the case of endomorphisms, see Theorem \ref{thm:tidy_existence}. It features yet another graph defined in terms of the data $(G,\alpha,U)$ which admits an action of $U_{++}$, a fundamental subgroup of $G$ associated to $\alpha$ and $U$, see Section \ref{sec:Willis_theory}.
Most of the work goes into showing that this graph admits a quotient with a connected component isomorphic to a regular rooted tree. The stabilizer of its root turns out to be tidy for $\alpha$.

Theorem \ref{thm:tidy_existence} and associated constructions result in a geometric proof of the fact \cite[Theorem 2]{Wil15} that tidiness is equivalent to being minimizing, see Theorem~\ref{thm:tidy_minimizing}. Using the aforementioned ideas, we obtain a tree representation theorem for a certain natural subsemigroup of $\End(G)$ associated to $\alpha$, analogous to \cite[Theorem 4.1]{BW04} for the case of automorphisms.

Finally, we give a simple way to construct endomorphisms of non-compact t.d.l.c groups from certain endomorphisms of compact groups. 

\subsection{Structure}
In Section \ref{sec:preliminaries} we collect the necessary preliminaries about Willis theory, graphs and the permutation topology which appears in this context. We then move on to characterize the notions of being tidy above, tidy below and tidy in terms of directed graphs in Section \ref{sec:char_tidy_subgroups}. These characterizations are used in Section \ref{sec:tidying_procedure} to develop a geometric tidying procedure. In return, this tidying procedure is employed in Section \ref{sec:scale_function} to recover fundamental results about the scale function. Section \ref{sec:tree_rep_thm} contains the tree representation theorem. Finally, Section \ref{sec:new_from_old} contains a construction of new endomorphisms of t.d.l.c. groups from old.

\subsection{Acknowledgments}
The authors owe thanks to George Willis for suggesting a graph-theoretic study of the scale and tidiness for endomorphisms in the spirit of M{\"o}ller, and for later discussions surrounding this work. Both authors also thank Jacqui Ramagge for helpful conversations and suggesting improvements on an early draft. This research was conducted whilst the first author was supported by the Australian Government Research Training Program Scholarship. The second author thanks The University of Newcastle and The University of Sydney for their hospitality, and gratefully acknowledges financial support through the SNSF Doc.Mobility fellowship 172120.

Finally, the authors are indebted to an anonymous referee whose comments not only significantly improved readability but also pointed out and suggested solutions to shortfalls in a number of arguments.

\section{Preliminaries}\label{sec:preliminaries}

In this section, we collect the necessary prerequisites for Willis theory, graphs and the permutation topology. Given a t.d.l.c. group $G$, we denote by $\End(G)$ the semigroup of continuous homomorphisms from $G$ to itself.

\subsection{Willis Theory}\label{sec:Willis_theory}
Throughout, $G$ denotes a t.d.l.c. group, $\alpha\in\End(G)$ an endomorphism of $G$ and $U\le G$ a compact open subgroup. We begin by defining essential subgroups associated to this data.

Set $U_{0}:=U$ and define inductively for $n\in\bbN_{0}$:
\begin{displaymath}
U_{n+1}:=U\cap\alpha(U_{n})\quad \quad\text{and}\quad U_{-n-1}:=U_{-n}\cap\alpha^{-n-1}(U)=\bigcap_{k = 0}^{n+1}\alpha^{-k}(U).
\end{displaymath}
Using these, we further define the following subgroups:
\begin{displaymath}
 U_{+}:=\bigcap_{n\in \bbN}U_{n},\quad U_{++}:=\bigcup_{n\in\bbN}\alpha^{n}(U_{+}),\quad U_{-}:=\bigcap_{n\in \bbN}U_{-n},\quad U_{--}:=\bigcup_{n\in\bbN}\alpha^{-n}(U_{-}).
\end{displaymath}
Both $(U_{n})_{n\in\bbN_{0}}$ and $(U_{-n})_{n\in\bbN_{0}}$ are descending sequences of compact subgroups of $U$. Whereas $U_{-n}$
is open for all $n\in\bbN_{0}$, the groups $U_{n}$ $(n\in\bbN)$ need not be so.

\begin{remark}\label{rem:comparison_of_definitions}
In the setting of automorphisms, the definitions of $U_{n+1}$ and $U_{+}$ given here agree with the definitions $U_{n+1} := \bigcap_{k = 0}^{n+1}\alpha^k(U)$ and $U_{+} := \bigcap_{k\ge 0} \alpha^k(U)$ which were originally given in \cite{Wil94}. They do not agree in the case of endomorphisms. See \cite[Section 3]{Wil15} for a discussion surrounding this. 
\end{remark}

There is an important alternative description of the above subgroups in terms of trajectories and regressive sequences: Let $x\in G$. The \emph{$\alpha$-trajectory} of $x$ is the sequence $(\alpha^{n}(x))_{n\in\bbN_{0}}$ in $G$. An \emph{$\alpha$-regressive trajectory} of $x$ is a sequence $(x_{n})_{n\in\bbN_{0}}$ in $G$ such that $x_{0}=x$ and $\alpha(x_{n})=x_{n-1}$ for all $n\in\bbN$. Consequently, we have the following verbal descriptions of $U_{\pm}$ and $U_{\pm\pm}$. See \cite[Proposition 1]{Wil15} for details.
\begin{displaymath}
 U_{+}=\left\{\text{\begin{tabular}{c} elements of $U$ which admit an \\ $\alpha$-regressive trajectory contained in $U$\end{tabular}}\right\},
\end{displaymath}
\begin{displaymath}
 U_{++}=\left\{\text{\begin{tabular}{c} elements of $G$ which admit an $\alpha$-regressive \\ trajectory that is eventually contained in $U$\end{tabular}}\right\},
\end{displaymath}
\begin{displaymath}
 U_{-}=\left\{\text{\begin{tabular}{c} elements of $U$ whose \\ $\alpha$-trajectory is contained in $U$\end{tabular}}\right\},
\end{displaymath}
\begin{displaymath}
 U_{--}=\left\{\text{\begin{tabular}{c} elements of $G$ whose $\alpha$-trajectory \\ is eventually contained in $U$\end{tabular}}\right\}.
\end{displaymath}

The following properties of compact open subgroups in relation to a given endomorphism lie at the heart of Willis theory.

The subgroup $U$ is \emph{tidy above} for $\alpha$ if $U = U_{+}U_{-}$, and \emph{tidy below} for $\alpha$ if $U_{--}$ is closed. It is \emph{tidy} for $\alpha$ if it is both tidy above and tidy below for $\alpha$.

\begin{remark}
Our definition of being tidy below deviates from \cite[Definition~9]{Wil15}. In the approach given there, the natural definition for being tidy below is that $U_{++}$ be closed and the sequence $([\alpha^{n+1}(U_+):\alpha^{n}(U_+)])_{n\in\bbN}$ be constant. Assuming tidiness above, the two definitions are equivalent by \cite[Proposition 9]{Wil15}. However, the definition given here is the one which appears naturally in our geometric context.
\end{remark}

The \emph{scale} of $\alpha$ is 
\[s(\alpha) = \min\{[\alpha(U):U\cap \alpha(U)]\mid U\le G \hbox{ compact open}\}.\]
A compact open subgroup $U\le G$ is \emph{minimizing} for $\alpha$ if $[\alpha(U):\alpha(U)\cap U]=s(\alpha)$.

\begin{remark}
The scale is well-defined as the index of an open subgroup inside a compact subgroup is finite: The latter can be covered by cosets of the open subgroup.
\end{remark}

One of the major achievements of \cite{Wil94},\cite{Wil01} and \cite{Wil15} is a proof showing that being minimizing and being tidy for a given endomorphism are equivalent conditions on compact open subgroups \cite[Theorem 2]{Wil15}. It relies on an algorithm, known as a \emph{tidying procedure}, which transforms, in a finite number of steps, an arbitrary compact open subgroup of $G$ into one that is tidy for a given endomorphism. This algorithm  allows one to prove many desirable properties of the scale function.

In \cite{Moe02}, a geometric interpretation of the scale and tidy subgroups as well as a new tidying procedure were given for the case of automorphisms. In the present work we generalize this framework to the setting of endomorphisms.

For the reader's convenience, we include the statement of \cite[Lemma 2]{Wil15} below. It constitutes an endomorphism version of the equality \begin{displaymath}
\alpha^{k}\left(\bigcap_{i=m}^{n}\alpha^{i}(U)\right)=\bigcap_{i=m+k}^{n+k}\alpha^{i}(U)
\end{displaymath}
which holds for an automorphism $\alpha$ and $k,m,n\in\bbZ$ with $m\le n$.

\begin{lemma}[{\cite[Lemma 2]{Wil15}}]\label{lem:wil_lem2}
Retain the above notation. For all $n,m\in\bbN_0$:
\begin{enumerate}[(i)]
  \item $U_{-n-m}=(U_{-n})_{-m}$,
  \item $\alpha^{k}(U_{-n})=\begin{cases} U_{k}\cap U_{k-n} & 0\le k\le n \\ \alpha^{k-n}(U_{n}) & k\ge n\end{cases}$, and
  \item\label{item:wil_lem2_iii} $(U_{-n})_{k}=U_{k}\cap U_{-n}$ for all $k\ge 0$ and $(U_{-n})_{+}=U_{+}\cap U_{-n}$.
\end{enumerate}
\end{lemma}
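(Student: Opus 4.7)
The plan is to extract a single push-forward identity and then derive all three claims from it. The identity, valid for any subsets $A, B \subseteq G$, reads $\alpha(A \cap \alpha^{-1}(B)) = \alpha(A) \cap B$; both inclusions are immediate from the definitions. Together with the fact that preimages commute with arbitrary intersections, this is essentially all the set-theoretic input required. I would prove the three parts in the order (i), (iii), (ii).

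Part (i) is purely set-theoretic. Expanding both sides of $U_{-n-m} = (U_{-n})_{-m}$ from the definitions produces the double intersection $\bigcap_{k=0}^{m}\bigcap_{j=0}^{n}\alpha^{-k-j}(U)$; reindexing by $i = k+j$, whose range is exactly $\{0,\ldots,n+m\}$, collapses this to $\bigcap_{i=0}^{n+m}\alpha^{-i}(U) = U_{-(n+m)}$.

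For part (iii) I would induct on $k$, treating the trivial case $n = 0$ separately since there $U_{-0} = U$. The base case $k = 0$ is clear because $U_{-n} \subseteq U$. For the inductive step, substitute the hypothesis $(U_{-n})_k = U_k \cap U_{-n}$ and use $U_{-n} = U \cap \alpha^{-1}(U_{-(n-1)})$ together with $U_k \subseteq U$ to rewrite $U_k \cap U_{-n}$ as $U_k \cap \alpha^{-1}(U_{-(n-1)})$. The push-forward identity then yields $\alpha((U_{-n})_k) = \alpha(U_k) \cap U_{-(n-1)}$; intersecting with $U_{-n}$, which lies in $U \cap U_{-(n-1)}$, gives $U_{-n} \cap \alpha(U_k) = U_{-n} \cap U_{k+1}$. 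The formula $(U_{-n})_{+} = U_{+} \cap U_{-n}$ follows at once by taking the intersection over $k$ and distributing.

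Part (ii) is the main content and proceeds similarly, by induction on $k$ for fixed $n$. In the range $0 \le k < n$, starting from $\alpha^k(U_{-n}) = U_k \cap U_{k-n}$, I rewrite $U_{k-n} = U_{-(n-k)}$ as $U \cap \alpha^{-1}(U_{-(n-k-1)})$, absorb the $U$ factor using $U_k \subseteq U$, and apply the push-forward identity to obtain $\alpha^{k+1}(U_{-n}) = \alpha(U_k) \cap U_{-(n-k-1)} = U_{k+1} \cap U_{(k+1)-n}$, where the last equality uses $U_{-(n-k-1)} \subseteq U$. The boundary $k = n$ gives $U_n \cap U = U_n$, which agrees with $\alpha^{n-n}(U_n) = U_n$ and thus provides the base case for the range $k \ge n$; the second formula then propagates for $k > n$ by a single application of $\alpha$. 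I expect the one place requiring care to be the bookkeeping at the boundary $k = n$, where the two pieces of the piecewise formula must glue consistently; everything else reduces to iterating the push-forward identity and exploiting the containment $U_k \subseteq U$.
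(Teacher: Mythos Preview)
The paper does not prove this lemma; it is quoted verbatim from \cite[Lemma 2]{Wil15} for the reader's convenience, with no proof supplied. There is therefore nothing to compare your argument against in the paper itself.

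Your proof is correct. The push-forward identity $\alpha(A\cap\alpha^{-1}(B))=\alpha(A)\cap B$ is exactly the right engine, and combined with the containments $U_k\subseteq U$ and $U_{-j}\subseteq U$ it drives all three inductions cleanly. The bookkeeping at the boundary $k=n$ in part (ii) is handled correctly (both branches give $U_n$), and your separation of the trivial case $n=0$ in part (iii) is appropriate since the rewriting $U_{-n}=U\cap\alpha^{-1}(U_{-(n-1)})$ requires $n\ge 1$. One cosmetic point: in the inductive step of (iii) you could simply note that $U_{-n}\cap\alpha(U_k)\cap U_{-(n-1)}=U_{-n}\cap\alpha(U_k)$ directly from $U_{-n}\subseteq U_{-(n-1)}$, rather than saying $U_{-n}$ ``lies in $U\cap U_{-(n-1)}$''; but this is already what you do in effect.
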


\subsection{Directed and Undirected Graphs}

We largely follow M{\"o}ller's notation for directed graphs. A \emph{directed graph $\Gamma$} is a tuple $(V(\Gamma),E(\Gamma))$ consisting of a \emph{vertex set} $V(\Gamma)$ and an \emph{edge set} $E(\Gamma)\subseteq V(\Gamma)\times V(\Gamma)\setminus\{(u,u)\mid u\in V(\Gamma)\}$. We denote by $o,t:E(\Gamma)\to V(\Gamma)$ the projections onto the first and second factor, the \emph{origin} and \emph{terminus} of an edge.
Let $\Gamma$ be a directed graph. An \emph{arc} of length $k\in \bbN$ from $v\in V(\Gamma)$ to $v'\in V(\Gamma)$ is a tuple $(v=v_{0},\ldots,v_{k}=v')$ of distinct vertices of $\Gamma$ such that $(v_{i},v_{i+1})$ in an edge in $\Gamma$ for all $i\!\in\!\{0,\ldots,k-1\}$. Two vertices $v,w\in\Gamma(V)$ are \emph{adjacent} if either $(v,w)\in E(\Gamma)$ or $(w,v)\in E(\Gamma)$. A \emph{path}  of length $k\in \bbN$ from $v\in V(\Gamma)$ to $v'\in V(\Gamma)$ is a tuple $(v=v_{0},\ldots,v_{k}=v')$ of distinct vertices of $\Gamma$ such that either $(v_{i},v_{i+1})$ or $(v_{i+1},v_{i})$ is an edge in $\Gamma$ for all $i\in\{0,\ldots,k-1\}$. The directed graph $\Gamma$ is connected if for all $v,w\in V(\Gamma)$ there is a path from $v$ to $w$. It is a \emph{tree} if it is connected and has no non-trivial cycles, i.e. tuples $(v_{0},\ldots,v_{k})$ with $k\ge 3$ and such that $(v_{0},\ldots,v_{k-1})$ and $(v_{k-1},v_k)$ are both paths and $v_{k}=v_{0}$. Two infinite paths in $\Gamma$ are \emph{equivalent} if they intersect in an infinite path. When $\Gamma$ is a tree, this is an equivalence relation on infinite paths and the \emph{boundary} $\partial\Gamma$ of $\Gamma$ is the set of these equivalence classes. For the following, let $v\!\in\! V(\Gamma)$. Set $\mathrm{in}_{\Gamma}(v)\!:=\!\{w\in V(\Gamma)\mid (w,v)\in E(\Gamma)\}$ and $\mathrm{out}_{\Gamma}(v):=\{w\in V(\Gamma)\mid (v,w)\in E(\Gamma)\}$. The \emph{in-valency} of $v\in V(\Gamma)$ is the cardinality of $\mathrm{in}_{\Gamma}(v)$ and the \emph{out-valency} of $v\in V(\Gamma)$ is the cardinality of $\mathrm{out}_{\Gamma}(v)$. The directed graph $\Gamma$ is \emph{locally finite} if all its vertices have finite in- and out-valency and \emph{regular} if $|\mathrm{in}_{\Gamma}(v)|$ and $|\out_{\Gamma}(v)|$ are constant for $v\in V(\Gamma)$.

A \emph{directed line} in $\Gamma$ is a sequence $(v_{i})_{i\in\bbZ}$ of distinct vertices such that either $(v_{i},v_{i+1})$ is an edge for every $i\in\bbZ$, or $(v_{i},v_{i-1})$ is an edge for every $i\in\bbZ$.

For a subset $A\subseteq V(\Gamma)$, the \emph{subgraph of $\Gamma$ spanned by $A$} is the directed graph with vertex set $A$ and edge set $\{(v,w)\in E(\Gamma)\mid v,w\in A\}$.

The \emph{set of descendants} of $v\!\in\! V(\Gamma)$ is $\mathrm{desc}_{\Gamma}(v)\!:=\!\{w\!\in\! V(\Gamma)\!\mid\!\exists\text{ arc from $v$ to $w$}\}$. For $A\subseteq V(\Gamma)$, set $\mathrm{desc}_{\Gamma}(A):=\bigcup_{v\in A}\mathrm{desc}_{\Gamma}(v)$. A directed tree $\Gamma$ is \emph{rooted} at $v_0\in V(\Gamma)$ if $\Gamma = \desc_{\Gamma}(v_0)$. In this case, $|\operatorname{in}_{\Gamma}(v)| = 1$ for all vertices $v\neq v_0$ and $|\operatorname{in}_{\Gamma}(v_0)|= 0$. The definition of being regular is altered for directed rooted trees: A directed tree rooted at $v_0$ is \emph{regular} if $|\out_\Gamma(v)|$ is constant for $v\in V(\Gamma)$.

A \emph{morphism} between directed graphs $\Gamma_{1}=(V_{1},E_{1})$ and $\Gamma_{2}=(V_{2},E_{2})$ is a pair $(\alpha_{V},\alpha_{E})$ of maps $\alpha_{V}:V_{1}\to V_{2}$ and $\alpha_{E}:E_{1}\to E_{2}$ preserving the graph structure, i.e. $\alpha_{V}(o(e))=o(\alpha_{E}(e))$ and $\alpha_{V}(t(e))=t(\alpha_{E}(e))$ for all $e\in E_{1}$. An \emph{automorphism} of a directed graph $\Gamma=(V,E)$ is a morphism $\alpha=(\alpha_{V},\alpha_{E})$ from $\Gamma$ to itself such that $\alpha_{V}$ and $\alpha_{E}$ are bijective and $\alpha$ admits an inverse morphism.

\vspace{0.2cm}
An \emph{undirected graph} $\Gamma$, after Serre \cite{Ser03}, is a tuple $(V,E)$ consisting of a \emph{vertex set} $V$ and an \emph{edge set} $E$, together with a fixed-point-free involution of $E$, denoted by $e\mapsto\overline{e}$, and maps $o,t:E\to V$, providing the \emph{origin} and \emph{terminus} of an edge, such that $o(\overline{e})=t(e)$ and $t(\overline{e})=o(e)$ for all $e\in E$. Given $e\in E$, the pair $\{e,\overline{e}\}$ is a \emph{geometric edge}. For $x\in V$, we let $E(x):=o^{-1}(x)=\{e\in E\mid o(e)=x\}$ be the set of edges issuing from $x$. The \emph{valency} of $x\in V$ is $|E(x)|$. In particular, a directed graph $(V,E)$ yields an undirected graph by passing to the symmetric closure of $E\subseteq V\times V$. The valency of a vertex in this undirected graph is the sum of the out- and in-valency of the same vertex in the directed graph. Undirected trees and their boundary are defined analogous to the case of directed trees. So are morphisms and automorphisms.

\subsection{Permutation Topology}\label{sec:perm_top}
The results of this article involve groups acting on directed graphs. In this situation, the acting group may be equipped with the \emph{permutation topology} for its action on the vertex set of said graph. This topology is naturally introduced in the following, more general setting. See e.g. \cite{Moe10}.

Let $X$ be a set and consider $G\le\Sym(X)$. The basic open sets for the permutation topology on $G$ are
\begin{displaymath}
 U_{x,y}:=\{g\in G\mid \forall i\in\{1,\ldots,n\}:\ g(x_{i})=y_{i}\}
\end{displaymath}
with $n\in\bbN$ and $x=(x_{1},\ldots,x_{n}), y=(y_{1},\ldots,y_{n})\in X^{n}$. The permutation topology turns $G$ into a topological group which is Hausdorff and totally disconnected. It  is locally compact if and only if the orbits of $U_{x}:= U_{x,x}$ are finite for all $x\in X$. Also, it makes the action map $G\times X\!\to\! X$ given by $(g,x)\!\mapsto\! g(x)$ continuous.

\section{Characterization of Tidy Subgroups}\label{sec:char_tidy_subgroups}

Let $G$ be a totally disconnected, locally compact group and let $\alpha\in\End(G)$. In this section, we characterize the compact open subgroups $U$ of $G$ which are tidy for $\alpha$ in terms of certain directed graphs. In doing so we generalize several results of \cite{Moe02} from conjugation automorphisms to general endomorphisms.

Frequently, we restrict to the case where the set $\{\alpha^{-i}(U)\mid i\in\bbN_{0}\}$ is infinite and hence all $\alpha^{-i}(U)$ ($i\in\bbN_{0}$) are distinct. The finite case corresponds to M{\"o}ller's periodicity case \cite[Lemma 3.1]{Moe02} and is covered by the following lemma.

\begin{lemma}\label{lem:finite_vertices_case}
Let $G$ be a t.d.l.c. group, $\alpha\in\End(G)$ and $U\le G$ compact open. If $\{\alpha^{-i}(U)\mid i\in\bbN_{0}\}$ is finite, then there is $N\in\bbN_{0}$ such that $\smash{V:=\bigcap_{k=0}^{N}\alpha^{-k}(U)}=U_{-}$ satisfies $\alpha(V)\le V$ and is tidy for $\alpha$.
\end{lemma}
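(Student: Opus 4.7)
The finiteness hypothesis forces the decreasing sequence $(U_{-n})_{n\in\bbN_{0}}$ of compact open subgroups to stabilize, since each $U_{-n}$ is determined by the finite collection $\{\alpha^{-k}(U)\mid 0\le k\le n\}$ and only finitely many distinct such intersections can arise. The first step is thus to pick $N\in\bbN_{0}$ with $U_{-n}=U_{-N}$ for all $n\ge N$, set $V:=U_{-N}$, and immediately read off $V=\bigcap_{n\in\bbN_{0}}U_{-n}=U_{-}$; in particular $V$ is a compact open subgroup of $U$.

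Next I would establish $\alpha(V)\le V$ directly from stabilization: the intersection $V\cap\alpha^{-1}(V)$ equals $\bigcap_{k=0}^{N+1}\alpha^{-k}(U)=U_{-(N+1)}=V$, so $V\le\alpha^{-1}(V)$, i.e.\ $\alpha(V)\le V$. Iterating gives $\alpha^{n}(V)\le V$ for all $n\in\bbN_{0}$, so the $\alpha$-trajectory of every element of $V$ stays inside $V$. Viewing $V$ as the ambient compact open subgroup in the construction of $V_{-}$, this shows $V_{-}=V$; alternatively, Lemma~\ref{lem:wil_lem2}(i) gives $V_{-m}=(U_{-N})_{-m}=U_{-N-m}=V$ for all $m\in\bbN_{0}$, with the same conclusion.

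Tidiness then splits neatly. For tidiness above, $V_{-}=V$ together with $1\in V_{+}\le V$ yields $V_{+}V_{-}=V_{+}V=V$. For tidiness below, the inclusion $\alpha(V)\le V$ produces an ascending chain $V\le\alpha^{-1}(V)\le\alpha^{-2}(V)\le\cdots$ of open subgroups of $G$ whose union is
\[
V_{--}=\bigcup_{n\in\bbN_{0}}\alpha^{-n}(V_{-})=\bigcup_{n\in\bbN_{0}}\alpha^{-n}(V).
\]
This union is therefore itself an open subgroup of $G$, and any open subgroup of a topological group is closed, so $V_{--}$ is closed.

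The entire argument is driven by stabilization of the sequence $(U_{-n})$, so no substantial obstacle is anticipated. The only points requiring a modicum of care are verifying that the stable limit coincides with $U_{-}$, and that the chain $(\alpha^{-n}(V))_{n}$ is genuinely ascending; both follow at once from stabilization and from $\alpha(V)\le V$ respectively. Once $\alpha(V)\le V$ is in hand, tidy above and tidy below come essentially for free.
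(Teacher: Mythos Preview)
Your proof is correct and follows essentially the same approach as the paper's own argument: identify $V=U_{-N}=U_{-}$ from the finiteness hypothesis, deduce $\alpha(V)\le V$, conclude $V=V_{-}$ for tidiness above, and observe that $V_{--}$ is open and hence closed for tidiness below. You simply fill in details (the explicit computation $V\cap\alpha^{-1}(V)=U_{-(N+1)}=V$ and the ascending-chain description of $V_{--}$) that the paper leaves implicit.
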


\begin{proof}
If $\{\alpha^{-i}(U)\mid i\in\bbN_{0}\}$ is finite, then  $U_{-}=\bigcap_{k\in\bbN_{0}}\alpha^{-i}(U)$ is an intersection of finitely many open subgroups. Say $\smash{U_{-}\!=\!\bigcap_{k=0}^{N}\alpha^{-k}(U)\!=:\!V}$. Then $V\le G$ is compact open and $\alpha(V)\le V$. We conclude $V=V_{-}$. Hence $V$ is tidy above for $\alpha$. Since $V=V_{-}\le V_{--}$ we also deduce that $V_{--}$ is open and hence closed. Thus $V$ is also tidy below for $\alpha$.
\end{proof}

\subsection{Tidiness Above}\label{sec:tidy_above}
We recover the fact that for every compact open subgroup $U\le G$ there is $n\!\in\!\bbN_{0}$ such that $U_{-n}\!=\bigcap_{k=0}^{n}\alpha^{-n}(U)$ is tidy above for $\alpha$.

Consider the graph $\Gamma$ defined as follows: Set $v_{-i}:=\alpha^{-i}(U)\in\calP(G)$ for $i\in\bbN_0$, where $\calP(G)$ denotes the power set of $G$. Now set
\begin{displaymath}
V(\Gamma):=\{gv_{-i}\mid g\in G,\ i\in\bbN_{0}\} \quad\text{and}\quad E(\Gamma):=\{(gv_{-i},gv_{-i-1})\mid g\in G,\ i\in\bbN_{0}\}.
\end{displaymath}
Note that $G$ acts on $\Gamma$ by automorphisms via left multiplication. For this action, we compute the stabilizer $G_{v_{-i}}=\alpha^{-i}(U)$ $(i\ge 0)$, as well as
\begin{displaymath}
G_{\{v_{-m}\mid m\ge 0\}}=\bigcap\nolimits_{m\ge 0}\alpha^{-m}(U)=U_{-}.
\end{displaymath}

We now reprove \cite[Lemma 4]{Wil15} in terms of the graph $\Gamma$.

\begin{lemma}\label{lem:wil_lem4_graph}
Retain the above notation. Suppose that $U_{N}v_{-1}=U_{+}v_{-1}$ for some $N\in\bbN_0$. Then $U_{-n}v_{-n-1}=(U_{-n})_{+}v_{-n-1}$ for all $n\ge N$.
\end{lemma}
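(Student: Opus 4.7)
The plan is to unfold the asserted equality $U_{-n}v_{-n-1}=(U_{-n})_{+}v_{-n-1}$ into a statement about cosets: using Lemma~\ref{lem:wil_lem2}(\ref{item:wil_lem2_iii}) to rewrite $(U_{-n})_{+}=U_{+}\cap U_{-n}$, it suffices to exhibit, for every $u\in U_{-n}$ and every $n\ge N$, an element $v\in U_{+}\cap U_{-n}$ with $v^{-1}u\in\alpha^{-n-1}(U)$; the reverse inclusion $(U_{+}\cap U_{-n})v_{-n-1}\subseteq U_{-n}v_{-n-1}$ is immediate.

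The key move is to push $u$ down by $\alpha^{n}$. By Lemma~\ref{lem:wil_lem2}(ii) we have $\alpha^{n}(U_{-n})=U_{n}\cap U_{0}=U_{n}$, so $\alpha^{n}(u)\in U_{n}\subseteq U_{N}$ because $n\ge N$. The hypothesis $U_{N}v_{-1}=U_{+}v_{-1}$ translates to the equality of subsets $U_{N}\cdot\alpha^{-1}(U)=U_{+}\cdot\alpha^{-1}(U)$ of $G$, so one may factor $\alpha^{n}(u)=wz$ with $w\in U_{+}$ and $z\in\alpha^{-1}(U)$. Applying $\alpha$ yields $\alpha^{n+1}(u)\in\alpha(w)\cdot U$.

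It remains to lift $w$ back to the correct level. Since $w\in U_{+}$, the regressive-trajectory description of $U_{+}$ recalled in Section~\ref{sec:Willis_theory} furnishes a sequence $(w_{k})_{k\in\bbN_{0}}$ in $U$ with $w_{0}=w$ and $\alpha(w_{k})=w_{k-1}$ for all $k\ge 1$. Set $v:=w_{n}$. Then $\alpha^{k}(v)=w_{n-k}\in U$ for $0\le k\le n$, so $v\in U_{-n}$, while the tail $(w_{n},w_{n+1},\ldots)$ is itself a regressive trajectory of $v$ contained in $U$, giving $v\in U_{+}$. A direct computation $\alpha^{n+1}(v)=\alpha(w_{0})=\alpha(w)$, combined with the previous paragraph, yields $\alpha^{n+1}(v^{-1}u)\in\alpha(w)^{-1}\cdot\alpha(w)U=U$, i.e.\ $v^{-1}u\in\alpha^{-n-1}(U)$, which is exactly what was needed.

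The main obstacle is recognising this lifting step: the hypothesis only supplies an element $w\in U_{+}$ at ``level zero'', whereas one needs an element which is \emph{simultaneously} in $U_{+}$ and in $U_{-n}$ and whose image under $\alpha^{n+1}$ equals $\alpha(w)$. The regressive-trajectory characterisation of $U_{+}$ makes exactly such a lift available, after which the argument goes through uniformly for every $n\ge N$ with no induction required.
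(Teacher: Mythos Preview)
Your proof is correct and follows essentially the same route as the paper's: push $u\in U_{-n}$ forward via $\alpha^{n}$ into $U_{n}\subseteq U_{N}$, invoke the hypothesis to produce an element of $U_{+}$ at level zero, then use the regressive-trajectory description of $U_{+}$ to lift that element back to $U_{+}\cap U_{-n}=(U_{-n})_{+}$ and verify the coset identity via $\alpha^{n+1}$. The paper phrases the lifting step as the surjectivity statement $\alpha^{n}(U_{+}\cap U_{-n})=U_{+}$, while you make the regressive trajectory explicit, but the content is identical.
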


\begin{proof}
By definition, $(U_{-n})_{+}v_{-n-1}\subseteq U_{-n}v_{-n-1}$. Now, let $w\in U_{-n}v_{-n-1}$. Then there is $u\in U_{-n}$ such that $w=uv_{-n-1}$. We obtain $\alpha^{n}(u)\in\alpha^{n}(U_{-n})$ which equals $U_{n}$ by Lemma \ref{lem:wil_lem2} and is contained in $U_{N}$ since $n\ge N$. Hence, by assumption, there is $u_{+}\in U_{+}$ such that $\alpha^{n}(u)v_{-1}=u_{+}v_{-1}$. By the characterisation of $U_+$ as elements of $U$ admitting an $\alpha$-regressive trajectory contained in $U$, we have $\alpha^n(U_+\cap U_{-n}) = U_+$. Hence we may pick $u_{+}'\in U_{+}\cap U_{-n}$ with $u_{+}v_{-1}=\alpha^{n}(u_{+}')v_{-1}$. Then $u_{+}'\in (U_{-n})_{+}$ as by Lemma \ref{lem:wil_lem2} we have $U_{+}\cap U_{-n}=(U_{-n})_{+}$. We conclude that $u_{+}'v_{-n-1}=uv_{-n-1}$ since $u^{-1}u_{+}'\in U_{-n-1}\le G_{v_{-n-1}}$ by the following argument: We have $u^{-1}u_{+}'\in U_{-n}$ by definition, thus it suffices to show $u^{-1}u_{+}'\in\alpha^{-n-1}(U)$. By construction, $\alpha^{n}(u)^{-1}u_{+}\in G_{v_{-1}}=\alpha^{-1}(U)$, hence
\begin{displaymath}
\alpha^{n+1}(u^{-1}u_{+}')=\alpha^{n+1}(u^{-1})\alpha^{n+1}(u_{+}')=\alpha(\alpha^{n}(u^{-1})u_{+})\in U.
\end{displaymath}
\end{proof}

We use the following lemma to prove analogues of Theorems 2.1 and 2.3 in~\cite{Moe02}.

\begin{lemma}\label{lem:moe_thm2.3_end}
Retain the above notation. Fix $N\in\bbN_0$ and consider the following:
\begin{itemize}
  \item[(i)] $U_{N}v_{-1}=U_{+}v_{-1}$.
  \item[(ii)] For every $u\in U_{-N}$ there is $u_{+}\in U_{+}\cap U_{-N}$ with $u_{+}v_{i}=uv_{i}$ for all $i\le 0$.
  \item[(iii)] The subgroup $U_{-N}$ is tidy above for $\alpha$.
\end{itemize}
Then (i) implies (ii), and (ii) implies (iii).
\end{lemma}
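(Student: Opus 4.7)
The plan is to treat the two implications separately. For (i)$\Rightarrow$(ii) I will use Lemma~\ref{lem:wil_lem4_graph} iteratively to produce better and better approximations $u_n\in U_+\cap U_{-N}$ that agree with $u$ on $v_0,v_{-1},\ldots,v_{-n}$, then pass to a cluster point by compactness. For (ii)$\Rightarrow$(iii) I will simply translate the agreement condition on all negative vertices into the single membership $u^{-1}u_+\in U_-$, which gives exactly the factorisation required for tidiness above.

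More concretely, for (i)$\Rightarrow$(ii) I start with $u_N:=e$. Since $u\in U_{-N}$ stabilises $v_0,\ldots,v_{-N}$, this $u_N$ agrees with $u$ on those vertices, and trivially $u_N\in U_+\cap U_{-N}$. For the inductive step at level $n\ge N$, suppose $u_n\in U_+\cap U_{-N}$ agrees with $u$ on $v_0,\ldots,v_{-n}$, so that $u_n^{-1}u\in\bigcap_{i=0}^{n}\alpha^{-i}(U)=U_{-n}$. Lemma~\ref{lem:wil_lem4_graph} furnishes $w\in(U_{-n})_+=U_+\cap U_{-n}$ with $wv_{-n-1}=u_n^{-1}u\,v_{-n-1}$. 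Setting $u_{n+1}:=u_nw$, I check that $u_{n+1}\in U_+\cap U_{-N}$ (both factors lie there), that $u_{n+1}$ still agrees with $u$ on $v_0,\ldots,v_{-n}$ (because $w\in U_{-n}\subseteq G_{v_{-i}}$ for $i\le n$), and that $u_{n+1}v_{-n-1}=uv_{-n-1}$ by construction.

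Each $U_k$ is a continuous image of a compact group, hence closed, so $U_+=\bigcap_k U_k$ is closed and $U_+\cap U_{-N}$ is compact. The sequence $(u_n)_{n\ge N}$ therefore admits a cluster point $u_+\in U_+\cap U_{-N}$. For every fixed $i\ge 0$, the coset $u\alpha^{-i}(U)$ is clopen and contains $u_n$ for all sufficiently large $n$, so the cluster point lies in it; intersecting over $i$ yields $u_+v_{-i}=uv_{-i}$ for all $i\ge 0$, which is (ii). For (ii)$\Rightarrow$(iii), Lemma~\ref{lem:wil_lem2}(iii) identifies $(U_{-N})_+$ with $U_+\cap U_{-N}$, and unwinding the definition, using $\alpha^{-k}(\alpha^{-j}(U))=\alpha^{-(k+j)}(U)$, gives $(U_{-N})_-=U_-$. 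The condition ``$u_+v_i=uv_i$ for all $i\le 0$'' from (ii) is exactly $u^{-1}u_+\in\bigcap_{j\ge 0}\alpha^{-j}(U)=U_-$, so $u\in u_+U_-\subseteq(U_{-N})_+(U_{-N})_-$, which is the nontrivial inclusion in $U_{-N}=(U_{-N})_+(U_{-N})_-$, establishing tidiness above.

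The main obstacle I anticipate is organising the induction in (i)$\Rightarrow$(ii) so that $u_{n+1}$ lands in $U_+\cap U_{-N}$ and simultaneously preserves the agreement already achieved on earlier vertices; this is what forces the base case to begin at level $N$ rather than $0$ and what makes Lemma~\ref{lem:wil_lem4_graph} indispensable. Once that step is in place, the compactness-cluster-point argument and the translation to (iii) are routine.
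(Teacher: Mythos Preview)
Your proposal is correct and follows essentially the same approach as the paper's proof: an inductive construction via Lemma~\ref{lem:wil_lem4_graph} followed by a compactness/cluster-point argument for (i)$\Rightarrow$(ii), and the identification $(U_{-N})_+=U_+\cap U_{-N}$, $(U_{-N})_-=U_-$ for (ii)$\Rightarrow$(iii). The only cosmetic difference is your choice of base case $u_N:=e$ (exploiting that $u\in U_{-N}$ already fixes $v_0,\ldots,v_{-N}$), whereas the paper starts one step later by invoking Lemma~\ref{lem:wil_lem4_graph} to produce $u_1$; this is just a shift of index.
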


\begin{proof}
To see (i) implies (ii) let $u\in U_{-N}$. By induction, we construct a sequence $\smash{(u_{n})_{n\in\bbN}}$ contained in $U_{+}\cap U_{-N}$ such that $\smash{u_{n}v_{i}=uv_{i}}$ for all $i\in\{-N-n,\ldots,0\}$. Then, as $U_{+}\cap U_{-N}$ is compact, $(u_{n})_{n\in\bbN}$ has an accumulation point $u_{+}\in U_{+}\!\cap\! U_{-N}$. We conclude that for any given $n\in\bbN$, we have
\begin{displaymath}
u_{k}^{-1}u_{+}\in G_{v_{-n}}=\alpha^{-n}(U)
\end{displaymath}
for large enough $k\in\bbN$ because $\alpha^{-n}(U)$ is open. That is, given $n\in\bbN$ we have 
\[u_+v_{-n} = u_kv_{-n} = uv_{-n}.\] 
for sufficiently large $k\in\bbN$. Now, by (i), Lemma \ref{lem:wil_lem4_graph} and Lemma \ref{lem:wil_lem2}\ref{item:wil_lem2_iii}, we may pick $\smash{u_{1}\in U_{+}\cap U_{-N}}$ such that $u_{1}v_{-N-1} = uv_{-N-1}$. Next, assume that $\smash{u_{n}}$ has been constructed for some $n\in\bbN$. Then $u_n^{-1}uv_i = v_i$ for all $i\in\{-N-n,\ldots, 0\}$. That is,
\begin{displaymath}
  u_{n}^{-1}u\in \bigcap_{i = 0}^{n + N}\alpha^{-i}(U) = U_{-N-n}.
\end{displaymath}
By Lemma \ref{lem:wil_lem4_graph}, there exists $x\in (U_{-N-n})_{+}$ such that $u_n^{-1}uv_{-N-n-1} = xv_{-N-n-1}$. By assumption, $u_{n}\in U_{+}\cap U_{-N}$ and, by Lemma \ref{lem:wil_lem2}, $x\in(U_{-N-n})_{+}\!=\!U_{+}\cap U_{-N-n}$. Hence $u_{n}x\in U_{+}\cap U_{-N}$. Also, $u_nx(v_i) = u(v_i)$ for all $i\in\{-N-n-1,\ldots, 0\}$. We may therefore set $u_{n+1}:=u_{n}x$.

To see that (ii) implies (iii) we use that, by assumption, for every $u\in U_{-N}$ there is $u_{+}\in U_{+}\cap U_{-N}$ such that $u$ and $u_{+}$ agree on $v_{i}$ for all $i\le 0$. Set $u_{-}:=u_{+}^{-1}u$. Then $u_{-}v_{i}=v_{i}$ for all $i\le 0$. Hence $u_{-}\in G_{\{v_{m}\mid m\le 0\}}=U_{-}$ and
\begin{displaymath}
U_{-N}=(U_{+}\cap U_{-N})U_{-} = (U_{-N})_+(U_{-N})_{-}
\end{displaymath}
by Lemma \ref{lem:wil_lem2} as required.
\end{proof}

\begin{theorem}\label{thm:tidy_above_exists}
Let $G$ be a t.d.l.c. group, $\alpha\in\End(G)$ and $U\le G$ compact open. Then there is $N\in\bbN$ such that $U_{N}v_{-1} = U_{+}v_{-1}$, and $U_{-N}$ is tidy above for $\alpha$.
\end{theorem}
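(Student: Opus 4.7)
The plan is to verify hypothesis (i) of Lemma \ref{lem:moe_thm2.3_end}, namely that $U_{N}v_{-1} = U_{+}v_{-1}$ for some $N\in\bbN_{0}$, and then invoke the implications (i)$\Rightarrow$(ii)$\Rightarrow$(iii) of that lemma to conclude that $U_{-N}$ is tidy above for $\alpha$. The verification of (i) splits into a combinatorial stabilization argument for the descending chain $(U_{n})_{n\in\bbN_{0}}$ followed by a compactness argument, which is where the essential feature of t.d.l.c.\ groups is used.

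For the stabilization, note that since $(U_{n})_{n\in\bbN_{0}}$ is a descending chain of subgroups, the sets $U_{n}v_{-1}$ form a descending chain of subsets of $Uv_{-1}$. Recalling from Section \ref{sec:tidy_above} that $G_{v_{-1}} = \alpha^{-1}(U)$, the orbit $Uv_{-1}$ has cardinality $[U:U\cap\alpha^{-1}(U)] = [U:U_{-1}]$, which is finite because $U_{-1}$ is an open subgroup of the compact group $U$. Hence the chain $(U_{n}v_{-1})_{n\in\bbN_{0}}$ of finite sets stabilizes at some $N\in\bbN_{0}$, so that $U_{n}v_{-1} = U_{N}v_{-1}$ for all $n\ge N$.

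Next, I identify $U_{N}v_{-1}$ with $U_{+}v_{-1}$. The inclusion $U_{+}v_{-1}\subseteq U_{N}v_{-1}$ is immediate since $U_{+}\subseteq U_{N}$. For the reverse, fix $u\in U_{N}$ and for each $n\ge N$ set
\[C_{n} := \{w\in U_{n}\mid wv_{-1} = uv_{-1}\} = u\alpha^{-1}(U)\cap U_{n}.\]
By stabilization each $C_{n}$ is non-empty. Moreover, $\alpha^{-1}(U)$ is closed in $G$, being the preimage of the compact (hence closed) subgroup $U$ under the continuous map $\alpha$. Therefore each $C_{n}$ is a closed subset of the compact group $U$, and hence compact. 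Since $C_{n+1}\subseteq C_{n}$, the finite intersection property yields an element $u_{+}\in\bigcap_{n\ge N}C_{n}$, which lies in $\bigcap_{n}U_{n} = U_{+}$ and satisfies $u_{+}v_{-1} = uv_{-1}$. This gives $U_{N}v_{-1}\subseteq U_{+}v_{-1}$, so equality holds, and Lemma \ref{lem:moe_thm2.3_end} completes the proof.

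The main obstacle is this last compactness step: one must promote the \emph{stagewise} coincidence $u_{n}v_{-1} = uv_{-1}$ with $u_{n}\in U_{n}$ to the existence of a \emph{single} witness lying in the deep intersection $U_{+} = \bigcap_{n}U_{n}$. Everything else is largely formal: the descending chain of orbits must stabilize simply because it is bounded in size by the finite index $[U:U_{-1}]$. The content of the theorem sits in translating ``eventually coincident modulo $U_{n}$'' into a statement about $U_{+}$ itself, which is made possible by the compactness of each $U_{n}$.
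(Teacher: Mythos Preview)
Your proof is correct and uses the same underlying compactness ingredients as the paper (the $U_{n}$ are compact, $\alpha^{-1}(U)$ is closed), but the packaging differs in a way worth noting. The paper argues by contradiction: assuming $U_{+}v_{-1}\subsetneq U_{n}v_{-1}$ for every $n$, it picks $u_{n}\in U_{n}$ with $u_{n}v_{-1}\notin U_{+}v_{-1}$, extracts an accumulation point $u_{+}$ of $(u_{n})$ in the compact group $U$, shows $u_{+}\in\bigcap_{m}U_{m}=U_{+}$, and obtains a contradiction from $u_{+}v_{-1}=u_{n_{k}}v_{-1}$ for large $k$. You instead first observe that $Uv_{-1}$ is \emph{finite} (of cardinality $[U:U_{-1}]$), so the descending chain $(U_{n}v_{-1})_{n}$ stabilizes outright; then you produce the witness in $U_{+}$ directly via the finite intersection property for the nested compacta $C_{n}=u\alpha^{-1}(U)\cap U_{n}$. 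Your route is a little cleaner and avoids both the contradiction framing and sequential arguments; the paper's route, by contrast, never appeals to finiteness of $Uv_{-1}$ and would read the same way even without that observation. Either way, the final appeal to Lemma~\ref{lem:moe_thm2.3_end} is identical.
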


\begin{proof}
First note that $U_{+}v_{-1}\subseteq U_{m}v_{-1}\subseteq U_{n}v_{-1}$ for all $0\le n\le m$ since the sets $U_{n}$ ($n\in\bbN_{0}$) are nested. Thus it suffices to show that $U_{N}v_{-1}\subset U_{+}v_{-1}$ for some $N\in\bbN$. Towards a contradiction, assume that $U_{+}v_{-1}\subsetneq U_{n}v_{-1}$ for all $n\in\bbN$,
i.e. there is $w_{n}\in V(\Gamma)$ such that $w_{n}\in U_{n}v_{-1}$ for all $n\in\bbN$ but $w_{n}\not\in U_{+}v_{-1}$. Then there is a sequence  $(u_{n})_{n\in\bbN}$ contained in $U$ such that $u_{n}\in U_{n}$ and $u_{n}v_{-1}=w_{n}$. Since $U$ is compact, the sequence $(u_{n})_{n\in\bbN}$ has an accumulation point $u_{+}$ in $U$. This accumulation point has to be contained in $U_{+}$: Indeed, pick a subsequence $(u_{n_{k}})_{k\in\bbN}$ of $(u_{n})_{n\in\bbN}$ converging to $u_{+}$. Then for any given $m\in\bbN$, we have $u_{n_{k}}\in U_{m}$ for almost all $k$. Since $U_{m}$ is closed we conclude that $u_{+}\in U_{m}$ for every $m\in\bbN$. Hence
\begin{displaymath}
 u_{+}\in\bigcap_{m\in\bbN}U_{m}=U_{+}.
\end{displaymath}
Furthermore, if $u_{+}v_{-1}=w$, then because $u_{+}u_{n_{k}}^{-1}$ is contained in the open set $G_{v_{-1}}$ for large enough $k\in\bbN$ we must have $w = w_k$ for sufficiently large $k\in\bbN$. We conclude that $w_k\in U_{+}v_{-1}$ for sufficiently large $k\in\bbN$ and thus we have a contradiction. Now, $U_{-N}$ is tidy above for $\alpha$ by Lemma \ref{lem:moe_thm2.3_end}.
\end{proof}

\begin{theorem}\label{thm:tidy_above}
Let $G$ be a t.d.l.c. group, $\alpha\in\End(G)$ and $U\le G$ compact open. Then the following statements are equivalent:
\begin{enumerate}[(i)]
  \item\label{item:thm_tidy_above_cosets} $UU_{-1} = U_+U_{-1}$.
  \item $Uv_{-1}=U_{+}v_{-1}$.
  \item\label{item:thm_tidy_above_U_+_action} For every $u\in U$ there is $u_{+}\in U_{+}$ such that $u_{+}v_{i}=uv_{i}$ for all $i\le 0$.
  \item The subgroup $U$ is tidy above for $\alpha$.
\end{enumerate}
\end{theorem}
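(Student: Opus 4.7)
The plan is to close a cycle of implications using Lemma \ref{lem:moe_thm2.3_end} for the forward direction and an elementary containment for the return.

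First I would observe that \ref{item:thm_tidy_above_cosets} and (ii) are essentially the same statement translated through the orbit-stabilizer correspondence. Since $G_{v_{-1}} = \alpha^{-1}(U)$, the orbit $Uv_{-1}$ is in bijection with $U/(U\cap\alpha^{-1}(U)) = U/U_{-1}$, and similarly $U_+v_{-1}$ corresponds to $U_+/(U_+\cap U_{-1})$. Hence $Uv_{-1}=U_+v_{-1}$ if and only if every left coset of $U_{-1}$ in $U$ meets $U_+$, i.e.\ $UU_{-1}=U_+U_{-1}$.

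Next I would obtain (ii) $\Rightarrow$ (iii) $\Rightarrow$ (iv) by specializing Lemma \ref{lem:moe_thm2.3_end} to $N=0$. Indeed $U_{-0}=U$, so the lemma's assumption $U_{N}v_{-1}=U_+v_{-1}$ becomes exactly (ii), its intermediate conclusion becomes exactly (iii), and its final conclusion becomes $U=U_+U_-$, which is (iv). No additional work is needed for these two implications; they are packaged in Lemma \ref{lem:moe_thm2.3_end}.

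Finally, to close the cycle I would prove \ref{item:thm_tidy_above_U_+_action} (iv) $\Rightarrow$ \ref{item:thm_tidy_above_cosets} (i) directly from the definitions. The key elementary observation is that $U_- = \bigcap_{n\in\bbN}U_{-n}\subseteq U_{-1}$, because $U_{-1}$ is one of the terms in the defining intersection. Thus if $U=U_+U_-$ then
\begin{displaymath}
UU_{-1} = U_+U_-U_{-1} \subseteq U_+U_{-1} \subseteq UU_{-1},
\end{displaymath}
so equality holds throughout, giving (i). This last step is the only part requiring an argument beyond citing Lemma \ref{lem:moe_thm2.3_end}, and it is the main thing to verify; the rest is essentially a repackaging of the preceding lemma at $N=0$, together with the translation between cosets and graph orbits.
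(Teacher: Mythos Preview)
Your proof is correct and follows essentially the same route as the paper: the paper proves (i)$\Rightarrow$(ii) from $U_{-1}$ being a subgroup, invokes Lemma~\ref{lem:moe_thm2.3_end} at $N=0$ for (ii)$\Rightarrow$(iii)$\Rightarrow$(iv), and closes the cycle via $UU_{-1}=U_{+}U_{-}U_{-1}=U_{+}U_{-1}$ using $U_{-}\le U_{-1}$, exactly as you do. One cosmetic slip: in your final paragraph you wrote the label \ref{item:thm_tidy_above_U_+_action} (which is (iii)) while clearly intending (iv); the argument itself is for (iv)$\Rightarrow$(i) and is fine.
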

\begin{proof}
That (i) implies (ii) follows as $U_{-1}$ is a subgroup. That (ii) implies (iii) and (iii) implies (iv) follows from Lemma \ref{lem:moe_thm2.3_end} for $N = 0$.	
Finally, if (iv) holds, then $UU_{-1}=U_{+}U_{-}U_{-1}=U_{+}U_{-1}$ as $U_{-}\le U_{-1}$.
\end{proof}

\begin{proposition}\label{prop:out_index}
Let $G$ be a t.d.l.c. group, $\alpha\in\End(G)$ and $U\le G$ compact open as well as tidy above for $\alpha$. Then 
\begin{align*}
[\alpha^{-n}(U):\alpha^{-n-1}(U)\cap\alpha^{-n}(U)] & =[\alpha^{-n}(U)\cap U:\alpha^{-n-1}(U)\cap\alpha^{-n}(U)\cap U] \\
& = [U_{-n}:U_{-n-1}]\\
& = [U:U_{-1}]
\end{align*}
for all $n\in\bbN$.
\end{proposition}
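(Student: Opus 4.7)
Write $V_i := \alpha^{-i}(U)$ for brevity and label the four quantities in the proposition by $A$, $B$, $C$, $D$ in the order they appear. The plan is to deduce $A = C$ and $B = C$ simultaneously from a single product decomposition of $V_n$, and then to prove $C = D$ by a separate kernel-image computation for $\alpha^n$.

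The core claim is the decomposition
\begin{displaymath}
V_n = U_{-n} \cdot (V_n \cap V_{n+1}).
\end{displaymath}
Given $x \in V_n$, so $\alpha^n(x) \in U$, tidy above lets me factor $\alpha^n(x) = ab$ with $a \in U_+$ and $b \in U_-$. Using the $\alpha$-regressive-trajectory characterization of $U_+$, I iteratively pick $a_1, \ldots, a_n \in U_+$ satisfying $\alpha(a_k) = a_{k-1}$ with $a_0 := a$, so that $\alpha^k(a_n) = a_{n-k} \in U_+ \subseteq U$ for every $k \in \{0, \ldots, n\}$; this places $a_n \in \bigcap_{k=0}^{n} V_k = U_{-n}$. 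Meanwhile $\alpha^n(a_n^{-1}x) = b \in U_-$, and the $\alpha$-invariance of $U_-$ yields $\alpha^{n+1}(a_n^{-1}x) = \alpha(b) \in U$, so $a_n^{-1}x \in V_n \cap V_{n+1}$. The factorization $x = a_n(a_n^{-1}x)$ proves the claim. Restricting to $x \in V_n \cap U$, the factor $a_n^{-1}x$ automatically lies in $U$ since $U_{-n} \subseteq U$, hence one also obtains $V_n \cap U = U_{-n}(V_n \cap V_{n+1} \cap U)$. The standard index identity (together with $U_{-n} \subseteq V_n$, so $U_{-n} \cap V_n \cap V_{n+1} = U_{-n-1}$) then yields
\begin{displaymath}
A = [V_n : V_n \cap V_{n+1}] = [U_{-n} : U_{-n-1}] = C,
\end{displaymath}
and analogously $B = C$.

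For $C = D$ I would use Lemma~\ref{lem:wil_lem2} to see that $\alpha^n : U_{-n} \to U_n$ is surjective. Its kernel lies inside $U_{-n-1}$ (since $\alpha^n(y) = e$ forces $\alpha^{n+1}(y) = e \in U$), and the preimage of $U_n \cap U_{-1}$ is exactly $U_{-n-1}$ (by unpacking the definition of $U_{-1}$), giving $U_{-n}/U_{-n-1} \cong U_n/(U_n \cap U_{-1})$. Now tidy above together with the inclusions $U_+ \subseteq U_n$ and $U_- \subseteq U_{-1}$ yields $U = U_+U_- \subseteq U_nU_{-1} \subseteq U$, so $U = U_nU_{-1}$, and hence $D = [U : U_{-1}] = [U_n : U_n \cap U_{-1}] = C$.

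The main obstacle is the decomposition $V_n = U_{-n}(V_n \cap V_{n+1})$: a general $x \in V_n$ need not lie in $U$, yet one must produce a representative of its coset modulo $V_n \cap V_{n+1}$ inside the much smaller subgroup $U_{-n}$. This simultaneously exploits both halves of tidy above, namely the regression property of $U_+$ (to push $a_n$ down into the intersection $U_{-n}$) and the $\alpha$-invariance of $U_-$ (to place the remainder in $V_{n+1}$). Once this decomposition is in hand, $A = B = C$ is immediate, and the remaining equality $C = D$ is a routine computation with $\alpha^n$.
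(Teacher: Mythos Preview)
Your proof is correct. For the equality $C = D$ you and the paper do essentially the same thing: apply $\alpha^{n}$ to pass from $U_{-n}/U_{-n-1}$ to a quotient at level zero, with surjectivity coming from tidy above. The paper maps directly into $U/U_{-1}$ and invokes Theorem~\ref{thm:tidy_above}\ref{item:thm_tidy_above_cosets} ($UU_{-1}=U_{+}U_{-1}$) together with $\alpha^{n}(U_{+}\cap U_{-n})=U_{+}$; you instead land in $U_{n}/(U_{n}\cap U_{-1})$ and then use $U=U_{n}U_{-1}$, which is the same content packaged slightly differently.

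Where you genuinely diverge is in handling $A=B=C$. The paper dispatches these with the phrase ``proven analogously,'' meaning the same induced-map argument with $\alpha^{n}$ applied to $V_{n}$ (respectively $V_{n}\cap U$) in place of $U_{-n}$. You instead establish the product decomposition $V_{n}=U_{-n}(V_{n}\cap V_{n+1})$ by explicitly lifting the $U_{+}$-factor of $\alpha^{n}(x)$ back along an $\alpha$-regressive trajectory into $U_{-n}$. This is more hands-on: it works directly from the trajectory characterizations of $U_{\pm}$ rather than from the already-proved equivalences in Theorem~\ref{thm:tidy_above}, and it makes transparent why the coset representative can be chosen in $U_{-n}$. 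The paper's route is shorter once Theorem~\ref{thm:tidy_above} is available; yours is more self-contained and would still go through if one had only the definition of tidy above and the description of $U_{\pm}$ via trajectories.
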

\begin{proof}
We prove the equality $[U_{-n}:U_{-n-1}]=[U:U_{-1}]$. The others are proven analogously. The homomorphism $\alpha^{n}$ induces an injective map from $U_{-n}/U_{-n-1}$ to $U/U_{-1}$ as $(\alpha^{n})^{-1}(U_{-1})\cap U_{-n}=U_{-n-1}$. This map is also surjective because $\alpha^{n}(U_{+}\cap U_{-n})=U_{+}$ and $U_{+}U_{-1}=UU_{-1}$ by Theorem \ref{thm:tidy_above}\ref{item:thm_tidy_above_cosets}.
\end{proof}

The following equality is used in Section \ref{sec:tree_rep_thm}.

\begin{lemma}\label{lem:coset_calculations}
Let $G$ be a t.d.l.c. group, $\alpha\in\End(G)$ and $U\le G$ compact open as well as tidy above for $\alpha$. Then $[\alpha(U):U\cap \alpha(U)] = [\alpha(U_+):U_+]$.
\end{lemma}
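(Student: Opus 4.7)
The plan is to exhibit a natural bijection between the left coset spaces $\alpha(U_+)/U_+$ and $\alpha(U)/(U\cap\alpha(U))$ induced by the inclusion $\alpha(U_+)\hookrightarrow\alpha(U)$. The starting observation is the identity $\alpha(U_+)\cap U=U_+$, which I would establish using the verbal description of $U_+$ recalled from \cite{Wil15}: if $x\in U_+$, any $\alpha$-regressive trajectory $(x,x_1,x_2,\ldots)$ in $U$ witnesses $x_1\in U_+$ and thus $x=\alpha(x_1)\in\alpha(U_+)$; conversely, if $x=\alpha(y)\in U$ with $y\in U_+$ and $(y,y_1,\ldots)$ an $\alpha$-regressive trajectory of $y$ in $U$, then $(x,y,y_1,\ldots)$ is an $\alpha$-regressive trajectory of $x$ in $U$, so $x\in U_+$.

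In particular, $U_+\le\alpha(U_+)\cap U\le U\cap\alpha(U)$, so the assignment $xU_+\mapsto x(U\cap\alpha(U))$ defines a map $\varphi\colon\alpha(U_+)/U_+\to\alpha(U)/(U\cap\alpha(U))$. Injectivity is immediate from the identity above: if $\alpha(u_+),\alpha(v_+)\in\alpha(U_+)$ lie in the same coset of $U\cap\alpha(U)$, then $\alpha(v_+)^{-1}\alpha(u_+)\in\alpha(U_+)\cap U=U_+$.

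For surjectivity, I would invoke tidiness above. Given $x=\alpha(u)\in\alpha(U)$, write $u=u_+u_-$ with $u_+\in U_+$ and $u_-\in U_-$. Then $x=\alpha(u_+)\alpha(u_-)$. Since elements of $U_-$ have their entire $\alpha$-trajectory in $U$, one has $\alpha(U_-)\le U_-\le U$, hence $\alpha(u_-)\in U\cap\alpha(U)$. Therefore $x(U\cap\alpha(U))=\alpha(u_+)(U\cap\alpha(U))$ lies in the image of $\varphi$. Combining injectivity and surjectivity gives the stated equality of indices.

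The only nontrivial ingredient is really the identity $\alpha(U_+)\cap U=U_+$; everything else is a short manipulation using the tidy-above decomposition and the defining closure property $\alpha(U_-)\subseteq U$. I do not anticipate a serious obstacle here, since the regressive trajectory characterization of $U_+$ in Section~\ref{sec:Willis_theory} does the heavy lifting, and tidiness above is exactly the hypothesis needed to split an arbitrary $\alpha(u)\in\alpha(U)$ into an $\alpha(U_+)$-part and a $U\cap\alpha(U)$-part.
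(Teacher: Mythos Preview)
Your proof is correct and follows essentially the same approach as the paper: both arguments hinge on the identity $\alpha(U_+)\cap U=U_+$ together with the observation $\alpha(U_-)\le U\cap\alpha(U)$ coming from tidiness above, and your explicit bijection $\alpha(U_+)/U_+\to\alpha(U)/(U\cap\alpha(U))$ is just a repackaging of the paper's coset computation $\alpha(U)(U\cap\alpha(U))=\alpha(U_+)(U\cap\alpha(U))$. The only difference is that you spell out the proof of $\alpha(U_+)\cap U=U_+$ via regressive trajectories, whereas the paper simply asserts it.
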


\begin{proof}
Note that 
\begin{displaymath}
  \alpha(U)(U\cap \alpha(U)) = \alpha(U_+)\alpha(U_-)(U\cap \alpha(U)) = \alpha(U_+)(U\cap \alpha(U))
\end{displaymath}
as $\alpha(U_-)\le \alpha(U\cap \alpha^{-1}(U))\le \alpha(U)\cap U$. Thus 
\[[\alpha(U):U\cap \alpha(U)] = [\alpha(U_+):U\cap \alpha(U)\cap \alpha(U_+)] = [\alpha(U_+):U\cap\alpha(U_+)].\]
Since $U\cap\alpha(U_+) = U_+$, the desired equality follows.
\end{proof}

\subsection{Tidiness Below}
In this section we present a geometric proof for the commonly used criterion that identifies a compact open and tidy above subgroup $U\le G$ as tidy below if we have $U_{--}\cap U=U_{-}$, cf. \cite[Proposition 8]{Wil15}.

First, recall that $U_{++}\!=\!\bigcup_{i\in\bbN_{0}}\alpha^{i}(U_{+})$ and $\smash{U_{--}\!=\!\bigcup_{i\in\bbN_{0}}\alpha^{-i}(U_{-})}$. In terms of the graph $\Gamma$ introduced in Section \ref{sec:tidy_above}, we have
\begin{displaymath}
U_{--}=\bigcup_{n\in\bbN}G_{\{v_{m}\mid m\le -n\}}.
\end{displaymath}

\begin{lemma}\label{lem:tidy_below}
Let $G$ be a t.d.l.c. group, $\alpha\in\End(G)$ and $U\le G$ be compact open as well as tidy above for $\alpha$. Then
\begin{enumerate}[(i)]
 \item the group $U_{--}\le G$ is closed if and only if $U_{--}\cap U=U_{-}$, and
 \item \label{item:lem:tidy_below:U_+} if $U_{--}$ is closed, then $U_{++}\cap U=U_{+}$.
\end{enumerate}
\end{lemma}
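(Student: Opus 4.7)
The plan is to exploit the description $U_{--} = \bigcup_{n\ge 0}\alpha^{-n}(U_-)$ as an ascending union of closed subgroups, together with the tidy-above decomposition $U = U_+U_-$.

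For the reverse direction of (i), I would apply the standard topological-group fact that a subgroup $H\le G$ is closed if and only if $H\cap V$ is closed in $V$ for some open subgroup $V\le G$. Since $U_- = U\cap\bigcap_{n\ge 1}\alpha^{-n}(U)$ is closed in $U$ as an intersection of closed subsets, the hypothesis $U_{--}\cap U = U_-$ immediately gives closedness of $U_{--}$ in $G$.

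For the forward direction of (i), suppose $U_{--}$ is closed. Set $V_n := \alpha^{-n}(U_-)\cap U$; these form an ascending chain of closed (hence compact) subgroups of $U$ with $V_0 = U_-$ and $\bigcup_{n\ge 0} V_n = U_{--}\cap U$, which is now compact by closedness of $U_{--}$. By the Baire category theorem, applied in this compact Baire space, some $V_N$ has nonempty interior, hence is an open subgroup of finite index; the chain therefore stabilizes with $V_N = U_{--}\cap U$. To propagate the stabilization down to $V_0 = U_-$, decompose $x\in V_N$ via tidy above as $x = u_+u_-$ with $u_\pm\in U_\pm$. The $\alpha$-invariance $\alpha(U_-)\subseteq U_-$ together with $\alpha^N(x)\in U_-$ forces $\alpha^N(u_+)\in U_-$, placing $u_+$ in $U_+\cap\alpha^{-N}(U_-) = U_+\cap V_N$. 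The general surjectivity $\alpha^k(U_+\cap U_{-k}) = U_+$, an immediate consequence of the verbal characterisation of $U_+$ and used in the proof of Lemma~\ref{lem:wil_lem4_graph}, then permits an inductive descent placing $u_+\in U_+\cap U_-\subseteq U_-$, whence $x\in U_-$. This descent is the main obstacle.

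For part (ii), invoke (i) to obtain $U_{--}\cap U = U_-$. Given $x\in U_{++}\cap U$, pick $N\ge 0$ and $z\in U_+$ with $\alpha^N(z) = x$. Iterating the surjectivity $\alpha^N(U_+\cap U_{-N}) = U_+$ produces a sequence $(z^{(k)})_{k\ge 1}\subseteq U_+\cap U_{-N}$ with $\alpha^N(z^{(k)}) = z^{(k-1)}$, where $z^{(0)} := z$; concatenating the partial forward orbits $(\alpha^{N-1}(z^{(k+1)}), \ldots, \alpha(z^{(k+1)}), z^{(k+1)})$ yields a regressive trajectory of $z$ in $U$. The remaining step is to bridge the initial $N$ steps from $x$ back to $z$, which reduces to forcing $z$ into $U_-$. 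For this I would run a parallel Baire argument on the ascending union $U_{++}\cap U = \bigcup_n \alpha^n(U_+)\cap U$ of closed subgroups, using that $U_{++}$ is closed in the present tidy-above setting whenever $U_{--}$ is; the index identity of Lemma~\ref{lem:coset_calculations} then pins the resulting stabilization at $U_+$, yielding $x\in U_+$ and thus $U_{++}\cap U = U_+$.
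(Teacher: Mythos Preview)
Your reverse direction of (i) is correct and matches the paper's argument.

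For the forward direction of (i), the Baire step is fine and does give $V_N = U_{--}\cap U$ for some $N$, but your descent to $V_0 = U_-$ does not work. After writing $x = u_+u_-$ you obtain $u_+\in U_+\cap V_N$; this is no improvement, since $x$ was already in $V_N$. The surjectivity $\alpha^k(U_+\cap U_{-k}) = U_+$ produces \emph{preimages} in $U$, whereas what you need is to force the \emph{forward} iterates $\alpha(u_+),\ldots,\alpha^{N-1}(u_+)$ into $U$. Nothing you have written controls these, and in fact the failure of this control is exactly the phenomenon the paper exploits: it proves the contrapositive by taking $u\in (U_{--}\cap U)\setminus U_-$, reducing via tidy above to $u\in U_+$, and then using a regressive trajectory of $u$ in $U_+$ to build a sequence $(x_i)\subseteq U_{--}\cap U$ whose accumulation point has infinitely many ``humps'' leaving $U_-$ and hence cannot lie in $U_{--}$. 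There is no Baire shortcut here; the obstacle you flag as ``the main obstacle'' is genuine.

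For (ii), your sketch has two problems. First, the sets $\alpha^n(U_+)\cap U$ need not be closed, since $\alpha$ is only a continuous endomorphism, so the Baire argument does not get off the ground. Second, invoking ``$U_{++}$ is closed whenever $U_{--}$ is'' is circular: in the paper's framework that equivalence (cf.\ the remark on tidy below) is established only after the present lemma, via \cite[Proposition~9]{Wil15}. The paper instead reduces (ii) to (i) directly: given $u\in (U_{++}\cap U)\setminus U_+$, one may assume $u\in U_-$ by tidy above, pick a regressive trajectory $(u_n)$ with $u_n\in U_+$ for $n\ge N$ and $u_{N-1}\notin U$, and observe that $u_N\in (U_{--}\cap U)\setminus U_-$, contradicting~(i).
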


\begin{proof}
For (i), first assume that $U_{--}\cap U=U_{-}$. Then $U_{--}\cap U$ is closed. Since $U$ is open, this implies that $U_{--}$ is closed, see \cite[Chapter III.2.1, Proposition 4]{Bou98}.

Now suppose that $U_{--}\cap U\neq U_{-}$. By definition, $U_{-}\subseteq U_{--}\cap U$. Hence there exists $u\in U=G_{v_{0}}$ with $u\in G_{\{v_{m}\mid m\le -n\}}$ for some $n\in\bbN$ but $u\not\in U_{-}=G_{\{v_{m}\mid m\le 0\}}$. Then there is $l\in\bbN$ with $0<l<n$ and such that $uv_{-l}\neq v_{-l}$. Since $U$ is tidy above, we may decompose $u=u_{+}u_{-}$ for some $u_{+}\in U_{+}$ and $u_{-}\in U_{-}$. Hence, replacing $u$ with $uu_{-}^{-1}$, we may assume that $u\in U_{+}$. Choose an $\alpha$-regressive trajectory $(u_{j})_{j\in\bbN_0}$ of $u$ contained in $U_{+}$. Define a sequence $(x_{i})_{i\in\bbN_0}$ contained in $U_{--}\cap U_{+}\le U$ as follows: Set $x_{0}:=u$ and $x_{i+1}:=x_{i}u_{(i+1)n}$. We collect the relevant properties of the sequences $(u_{j})_{j\in\bbN_0}$ and $(x_{i})_{i\in\bbN_0}$ in the following lemma, see below for an illustration of the second sequence.

\begin{lemma}\label{lem:aux_sequences}
The sequences $(u_{j})_{j\in\bbN_0}$ and $(x_{i})_{i\in\bbN_0}$ have the following properties.
\begin{itemize}
 \item[(a)] For all $j\in\bbN_0$: $u_{j}\in G_{\{v_{m}\mid m\le -n-j\}}\cap G_{\{v_{m}\mid -j\le m\le 0\}}\cap U_{+}\le U_{--}\cap U_{+}$.
 \item[(b)] For all $i\in\bbN$: $x_{i-1}\in G_{\{v_{m}\mid m\le -in\}}\cap U_{+}\le U_{--}\cap U_{+}$.
 \item[(c)] For all $j\in\bbN_0$: $u_{j}\not\in G_{v_{-l-j}}$.
 \item[(d)] For all $i\in\bbN_0$ and $0\le j\le i$: $x_{i}\not\in G_{v_{-l-jn}}$ and $x_{i+1}v_{-l-jn}=x_{i}v_{-l-jn}$.
\end{itemize}
\end{lemma}

\begin{proof}
For (a), note that $\alpha^{j}(u_{j})=u\in G_{\{v_{m}\mid m\le -n\}}=\bigcap_{k\ge n}\alpha^{-k}(U)$ by assumption and therefore $\smash{u_{j}\in\alpha^{-j}\big(\bigcap_{k\ge n}\alpha^{-k}(U)\big)=\bigcap_{k\ge n+j}\alpha^{-k}(U)=G_{\{v_{m}\mid m\le -n-j\}}}$. For the second part, simply recall that $(u_{j})_{j}$ is an $\alpha$-regressive trajectory of $u$ contained in $U_{+}$; in particular, $u_{j}\in U_{+}$ and $\alpha^{m}(u_{j})\in U_{+}\le U$ for all $0\le m\le j$. Therefore, $u_{j}\in\alpha^{-m}(U)=G_{v_{-m}}$ for all $0\le m\le j$.

Part (b) follows from (a) given that $x_{i}=x_{i-1}u_{in}=uu_{n}\cdots u_{(i-1)n}u_{in}$.

For part (c), recall that we have $u\not\in\alpha^{-l}(U)=G_{v_{-l}}$ by assumption and therefore $u_{j}\not\in\alpha^{-l-j}(U)=G_{v_{-l-j}}$. In order to prove part (d), we argue by induction: The element $x_{0}=u$ satisfies $x_{0}\not\in G_{v_{-l}}$ by part (c). Also $x_{1}v_{-l}=x_{0}v_{-l}$ because $x_{0}^{-1}x_{1}=u^{-1}uu_{n}=u_{n}$ and $u_{n}\in G_{\{v_{m}\mid -n\le m\le 0\}}$ by part (a). Now assume the statement holds true for $i\in\bbN$ and consider $x_{i+1}=x_{i}u_{(i+1)n}$. Then $x_{i+1}\not\in G_{v_{-l-(i+1)n}}$ because $u_{(i+1)n}\not\in G_{v_{-l-(i+1)n}}$ by part (c) whereas $x_{i}\in G_{v_{-l-(i+1)n}}$ by part (b). Also, $x_{i+1}v_{-l-jn}=x_{i}v_{-l-jn}$ for all $0\le j\le i$ since $x_{i+1}=x_{i}u_{(i+1)n}$ and $u_{(i+1)n}\in G_{\{v_{m}\mid -(i+1)n\le m\le 0\}}$ by part (a).
\end{proof}

By Lemma \ref{lem:aux_sequences}, the sequence $(x_{i})_{i\in\bbN_{0}}\subseteq U_{--}\cap U_{+}\subseteq U$ has the following shape, analogous to \cite[Figure 1]{Moe02}.
\begin{figure}[ht]
\begin{displaymath}
  \includegraphics{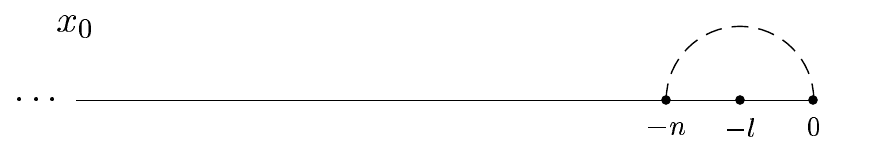}
\end{displaymath}
\begin{displaymath}
  \includegraphics{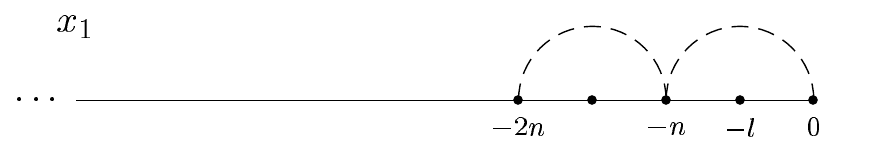}
\end{displaymath}
\begin{displaymath}
  \includegraphics{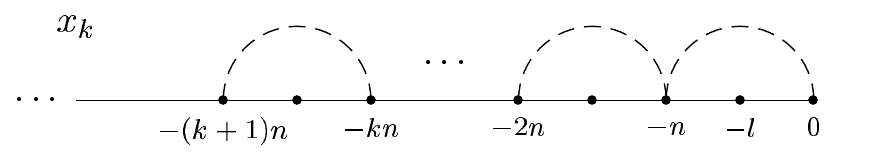}
\end{displaymath}
\begin{displaymath}
  \includegraphics{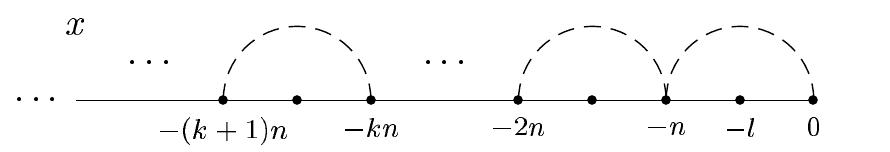}
\end{displaymath}
\caption{The sequence $(x_{i})_{i\in\bbN_{0}}$.}
\end{figure}

Now, since $U$ is compact, the sequence $(x_{i})_{i\in\bbN_{0}}\subseteq U_{--}\cap U_{+}\subseteq U$ has an accumulation point $x\in U$. However, $x\not\in U_{--}$ and hence $U_{--}$ is not closed.

For part (ii), note that $U_{+}\subseteq U_{++}\cap U$ by definition. Hence, towards proving the converse, we assume that there is $u\in(U_{++}\cap U)\backslash U_{+}$. Since $U$ is tidy above we may decompose $u=u_{+}u_{-}$ with $u_{+}\in U_{+}$ and $u_{-}\in U_{-}$. Replacing $u$ with $u_{+}^{-1}u\in(U_{++}\cap U)\backslash U_{+}$ we may hence assume $u\in U_{-}$.

Now, since $u\in U_{++}$, there is an $\alpha$-regressive trajectory $(u_{n})_{n\in\bbN_{0}}$ of $u$ in $G$ such that for some $N\in\bbN$ we have $u_{n}\in U_{+}$ for all $n\ge N$ and $u_{N-1}\not\in U$. Consider the element $u_{N}\in U$. For $n\ge N$ we have $\alpha^{n}(u_{N})=\alpha^{n-N}(u)\in U_{-}$. Hence $u_{N}\in U_{--}\cap U$. However, $u_{N}\not\in U_{-}$: Indeed, $u_{N}\not\in G_{v_{-1}}=\alpha^{-1}(U)$ because $u_{N-1}\not\in U$. Therefore, by part (i), $U_{--}$ is not closed.
\end{proof}

\subsection{Tidiness}\label{sec:tidiness}
We now combine the previous sections in order to characterize tidiness in terms of a subgraph of the graph $\Gamma$ introduced above. As before, let $G$ be a t.d.l.c. group, $\alpha\in\End(G)$ and $U\le G$ compact open.
Recall the definition $v_{-i}:=\alpha^{-i}(U)\in\calP(G)$ for $i\in\bbN_{0}$. We consider the subgraph $\Gamma_{+}$ of $\Gamma$ defined by
\begin{displaymath}
  V(\Gamma_{+}):=\{uv_{-i}\mid u\in U,\ i\in\bbN_{0}\}, \quad E(\Gamma_{+}):=\{(uv_{-i},uv_{-i-1})\mid u\in U,\ i\in\bbN_{0}\}.
\end{displaymath}
Note that the action of $U\le G$ on $\Gamma$ preserves $\Gamma_{+}\subseteq\Gamma$ and that $\Gamma_{+} = \desc_{\Gamma_{+}}(v_{0})$.

\begin{lemma}\label{lem:tidy_above_transitive}
Let $G$ be a t.d.l.c. group, $\alpha\in\End(G)$ and $U\le G$. If $U$ is tidy above for $\alpha$, then $U$ acts transitively on arcs of a given length issuing from $v_{0}\in V(\Gamma_{+})$.
\end{lemma}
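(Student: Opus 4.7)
The plan is to prove the claim by induction on the arc length $k\ge 1$.

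For the base case $k=1$, any arc issuing from $v_0$ has the form $(v_0,uv_{-1})$ for some $u\in U$, and this is clearly the image of the standard arc $(v_0,v_{-1})$ under $u\in U$; tidiness above is not needed here.

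For the inductive step, let $\gamma=(v_0,w_1,\ldots,w_{k+1})$ be an arc of length $k+1$. Applying the inductive hypothesis to the length-$k$ prefix of $\gamma$ yields $g\in U$ mapping $(v_0,v_{-1},\ldots,v_{-k})$ to $(v_0,w_1,\ldots,w_k)$, and hence $g^{-1}\gamma = (v_0,v_{-1},\ldots,v_{-k},w^{*})$ where $w^{*}:=g^{-1}w_{k+1}$ is a down-neighbor of $v_{-k}$ in $\Gamma_+$. It suffices to produce $h\in U_{-k}=\bigcap_{i=0}^{k}(U\cap\alpha^{-i}(U))$, which is precisely the pointwise stabilizer in $U$ of $v_0,v_{-1},\ldots,v_{-k}$, satisfying $hv_{-k-1}=w^{*}$; then $gh\in U$ will carry the standard length-$(k+1)$ arc to $\gamma$. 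Existence of such $h$ reduces to transitivity of $U_{-k}$ on the down-neighbors of $v_{-k}$ in $\Gamma_+$. These down-neighbors are parameterized by $V_k/(V_k\cap v_{-k-1})$ where $V_k:=U\cap v_{-k}$, while the $U_{-k}$-orbit of $v_{-k-1}$ has size $[U_{-k}:U_{-k}\cap v_{-k-1}]=[U_{-k}:U_{-(k+1)}]$. Proposition~\ref{prop:out_index}, which requires $U$ to be tidy above, supplies
\[
[V_k:V_k\cap v_{-k-1}] \;=\; [U_{-k}:U_{-(k+1)}] \;=\; [U:U_{-1}],
\]
so the $U_{-k}$-orbit exhausts the set of down-neighbors and the desired $h$ exists.

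The main obstacle is the index equality in the display above, since this is exactly where tidiness above enters: without this hypothesis, $[V_k:V_k\cap v_{-k-1}]$ can strictly exceed $[U_{-k}:U_{-(k+1)}]$, so the $U_{-k}$-orbit of $v_{-k-1}$ fails to cover all down-neighbors of $v_{-k}$ and the inductive step collapses. Because the intervening vertex-wise indices behave compatibly with $\alpha^k$ and the identification $\alpha^k(V_k)=U\cap\alpha^k(U)$, tidiness above is exactly the ingredient that propagates the single-step count $[U:U_{-1}]$ to every level of $\Gamma_+$.
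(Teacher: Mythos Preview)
Your argument is correct and follows essentially the same route as the paper's proof: both proceed by induction on the arc length, reducing the inductive step to transitivity of the pointwise stabilizer $U_{-k}$ on $\out_{\Gamma_+}(v_{-k})$, and both obtain that transitivity by comparing the orbit size $[U_{-k}:U_{-(k+1)}]$ with the out-valency $[\alpha^{-k}(U)\cap U:\alpha^{-k}(U)\cap\alpha^{-k-1}(U)\cap U]$ via Proposition~\ref{prop:out_index}. The paper compresses this into a single sentence, while you have spelled out the base case, the inductive reduction, and the counting argument in full; the substance is the same.
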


\begin{proof}
Given that $|\mathrm{out}_{\Gamma_{+}}(v_{-n+1})|=[\alpha^{-n+1}(U)\cap U:\alpha^{-n+1}(U)\cap\alpha^{-n}(U)\cap U]$ and $U_{\{v_{-k}\mid k\le n-1\}}\!=\!U_{-n+1}$, the assertion follows by induction from Proposition~\ref{prop:out_index}.
\end{proof}

We are now ready to characterize tidiness of $U$ in terms of $\Gamma_{+}$ when the set $\{v_{-i}\mid i\in\bbN_{0}\}$ is infinite. Concerning the case where $\{v_{-i}\mid i\in\bbN_{0}\}$ is finite, Theorem \ref{thm:tidy_tree} is complemented by Lemma \ref{lem:finite_vertices_case}.

The following lemmas will help to identify vertices in $\Gamma_{+}$, and later on vertices in $\Gamma_{++}$, to be defined in Section \ref{sec:tidying_procedure}, as (un)equal and are frequently used in the proof of Theorem \ref{thm:tidy_tree}.
\begin{lemma}\label{lem:equal_vert_in_gamma}
Retain the above notation and suppose $\{v_{-i}\mid i\in\bbN_{0}\}$ is infinite. Let $g_0v_{-i}$, $g_{1}v_{-j}\in V(\Gamma)\subseteq\calP(G)$, where $g_0,g_1\in G$. If $g_{0}v_{-i}=g_{1}v_{-j}$, then $i=j$.
\end{lemma}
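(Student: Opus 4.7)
The plan is to argue in two simple steps: first that equality of the cosets $g_0 v_{-i} = g_1 v_{-j}$ forces equality of the subgroups $v_{-i} = v_{-j}$, and then that the hypothesis that $\{v_{-k} \mid k \in \bbN_0\}$ is infinite rules out the case $i \neq j$.

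For the first step, I would note that $v_{-i} = \alpha^{-i}(U)$ is the preimage of the subgroup $U$ under the homomorphism $\alpha^{i}$, and hence is itself a subgroup of $G$. Thus $g_0 v_{-i}$ and $g_1 v_{-j}$ are left cosets of subgroups of $G$, possibly of different subgroups. A routine computation shows that if two left cosets $g_0 H_0$ and $g_1 H_1$ of subgroups $H_0, H_1 \le G$ coincide as sets, then $H_0 = H_1$: indeed, $1 \in H_0$ forces $g_0 \in g_1 H_1$, whence $g_1^{-1}g_0 \in H_1$, so $g_0 H_1 = g_1 H_1 = g_0 H_0$ and therefore $H_0 = H_1$. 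This step applied to our cosets gives $v_{-i} = v_{-j}$.

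For the second step, I would suppose towards a contradiction that $i \neq j$, and without loss of generality that $i < j$. Applying $\alpha^{-1}$ to both sides of the set equality $\alpha^{-i}(U) = \alpha^{-j}(U)$ and iterating yields $v_{-(i+k)} = v_{-(j+k)}$ for every $k \in \bbN_0$. Consequently the tail sequence $(v_{-m})_{m \ge i}$ is periodic with period dividing $j - i$, and hence attains only finitely many distinct values. Combined with the finitely many earlier terms $v_0, v_{-1}, \ldots, v_{-(i-1)}$, this forces $\{v_{-k} \mid k \in \bbN_0\}$ to be finite, contradicting the hypothesis. Therefore $i = j$.

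There is no substantive obstacle in this argument; the only point requiring a moment's care is the elementary but easily overlooked observation that two left cosets of possibly distinct subgroups can agree as sets only when the underlying subgroups coincide. Everything else is a direct consequence of the definitions and of $\alpha$ being a homomorphism.
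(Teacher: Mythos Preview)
Your proof is correct and follows essentially the same approach as the paper's. The paper's proof is simply terser: it asserts in one line that infiniteness of $\{v_{-i}\mid i\in\bbN_{0}\}$ forces the $v_{-i}$ to be pairwise distinct (precisely what your periodicity argument establishes), and then invokes the same elementary fact that left cosets of distinct subgroups are distinct, which you also spell out.
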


\begin{proof}
Since $\{v_{-i}\mid i\in\bbN_{0}\}$ is infinite and $v_{-i}:=\alpha^{-i}(U)$, its elements are pairwise distinct. The assertion hence follows from the fact that left cosets of distinct subgroups of a given group are distinct.
\end{proof}

Lemma \ref{lem:equal_vert_in_gamma}, applied to $\Gamma_{+}$ as a subgraph of $\Gamma$, shows that the depth of a vertex $uv_{-n}$ $(u\in U,\ n\in\bbN_{0})$, defined as the length of the shortest arc from the root to $uv_{-n}$, is given by $n$.

\begin{lemma}\label{lem:gamma_+_depth}
Retain the above notation and suppose $\{v_{-i}\mid i\in\bbN_{0}\}$ is infinite. Let $u\in U$ and $n\in\bbN$. Then $uv_{-n}\in V(\Gamma_{+})$ has depth $n$ in $\Gamma_{+}$.
\end{lemma}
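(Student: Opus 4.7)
The plan is to prove the two inequalities $\desc_{\Gamma_+}(v_0)$-depth$(uv_{-n}) \le n$ and $\ge n$ separately, each relying on Lemma~\ref{lem:equal_vert_in_gamma} to detect the ``level'' $n$ from the coset representation.

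For the upper bound I would exhibit an explicit arc of length $n$ from $v_0$ to $uv_{-n}$, namely
\[
(v_0,\ uv_{-1},\ uv_{-2},\ \ldots,\ uv_{-n}).
\]
Each consecutive pair is an edge in $\Gamma_+$: the pair $(v_0,uv_{-1})$ because $v_0 = uv_0$ (as $u \in U$), and the pair $(uv_{-i},uv_{-i-1})$ for $1 \le i \le n-1$ by the very definition of $E(\Gamma_+)$. To confirm these vertices are pairwise distinct (as required for an arc), I would observe that if $uv_{-i} = uv_{-j}$ for indices $i,j \in \{0,\ldots,n\}$, then cancelling $u$ gives $v_{-i} = v_{-j}$, so $\alpha^{-i}(U) = \alpha^{-j}(U)$, forcing $i = j$ by the assumption that $\{v_{-i} \mid i \in \bbN_0\}$ is infinite (equivalently, by Lemma~\ref{lem:equal_vert_in_gamma}).

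For the lower bound I would take an arbitrary arc $(w_0,w_1,\ldots,w_k)$ in $\Gamma_+$ with $w_0 = v_0$ and $w_k = uv_{-n}$, and prove by induction on $i$ that $w_i = g_i v_{-i}$ for some $g_i \in U$. Indeed, the edges of $\Gamma_+$ have the form $(gv_{-j}, gv_{-j-1})$ with $g \in U$ and $j \in \bbN_0$, so given $w_i = g_i v_{-i}$, the edge $(w_i, w_{i+1})$ must equal $(g' v_{-j'}, g' v_{-j'-1})$ for some $g' \in U$ and $j' \in \bbN_0$; matching first coordinates gives $g_i v_{-i} = g' v_{-j'}$, and Lemma~\ref{lem:equal_vert_in_gamma} then forces $j' = i$, so $w_{i+1} = g' v_{-i-1}$ with $g' \in U$. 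Applying the inductive conclusion at $i=k$, we have $w_k = g_k v_{-k}$ for some $g_k \in U$; combining with $w_k = uv_{-n}$ and one more application of Lemma~\ref{lem:equal_vert_in_gamma} yields $k = n$.

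There is no serious obstacle here: the argument is essentially bookkeeping controlled by Lemma~\ref{lem:equal_vert_in_gamma}. The only subtlety to watch is to remember that edges of $\Gamma_+$ (as opposed to $\Gamma$) require the prefactor $g$ in the pair $(gv_{-j}, gv_{-j-1})$ to lie in $U$ rather than in $G$, which is precisely what makes the induction close with $g_i \in U$ at every step.
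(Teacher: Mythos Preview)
Your proposal is correct and follows essentially the same approach as the paper: the paper's proof is precisely your lower-bound argument (induction along an arbitrary arc using Lemma~\ref{lem:equal_vert_in_gamma} to force $k=n$), while the existence of some arc---your explicit upper-bound construction---is handled in the paper by the remark just before the lemma that $\Gamma_{+}=\desc_{\Gamma_{+}}(v_{0})$.
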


\begin{proof}
Let $(w_{0}=v_{0},w_{1},\ldots,w_{k})$ be an arc from $v_{0}$ to $uv_{-n}$. By definition, we have $w_{1}=u_{1}v_{-1}$ for some $u_{1}\in U$. Hence, by induction, $w_{k}=u_{k}v_{-k}$ for some $u_{k}\in U$. But $w_{k}=uv_{-n}$. Therefore, $n=k$ by Lemma \ref{lem:equal_vert_in_gamma}.
\end{proof}

\begin{theorem}\label{thm:tidy_tree}
Let $G$ be a t.d.l.c. group, $\alpha\in\End(G)$ and $U\le G$ compact open. Assume $\{v_{-i}\mid i\in\bbN_{0}\}$ is infinite. Then $U$ is tidy for $\alpha$ if and only if $\Gamma_{+}$ is a directed tree with constant in-valency $1$, excluding $v_{0}$, as well as constant out-valency.
\end{theorem}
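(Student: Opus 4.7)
The strategy is to reduce each of the three graph conditions (tree, in-valency~$1$ at non-roots, constant out-valency) to statements about the subgroup chain $(U_{-n})_{n\ge 0}$, and then to match these against tidiness above and below. Using Lemma~\ref{lem:equal_vert_in_gamma} to keep track of which cosets coincide as vertices of $\Gamma_+$, a direct computation in the spirit of Lemma~\ref{lem:tidy_above_transitive} shows that at every vertex of depth $n$ the out-valency equals $[U_{-n}:U_{-n-1}]$, while at every vertex of depth $n+1$ the in-valency equals $[U\cap\alpha^{-n-1}(U):U_{-n-1}]$. Thus constant out-valency is equivalent to constancy of $[U_{-n}:U_{-n-1}]$ in $n$, and in-valency~$1$ at every non-root vertex is equivalent to $U\cap\alpha^{-n}(U)=U_{-n}$ for every $n\ge 1$ (convexity: if $u\in U$ and $\alpha^n(u)\in U$, then $\alpha^k(u)\in U$ for all intermediate $k$). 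Moreover, rooted-ness $\Gamma_+=\desc_{\Gamma_+}(v_0)$ together with in-valency~$1$ forces $\Gamma_+$ to be a tree by a depth argument: any non-trivial undirected cycle would contain a vertex of locally maximum depth whose two cycle-neighbours would have strictly smaller depth and hence both be in-neighbours, contradicting in-valency~$1$.

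For the forward direction, suppose $U$ is tidy. Proposition~\ref{prop:out_index} gives $[U_{-n}:U_{-n-1}]=[U:U_{-1}]$, hence constant out-valency. For in-valency~$1$, take $u\in U$ with $\alpha^n(u)\in U$ and decompose $u=u_+u_-$ via Theorem~\ref{thm:tidy_above}\ref{item:thm_tidy_above_U_+_action}; since $\alpha^k(u_-)\in U_-\subseteq U$ for every $k$, it suffices to exhibit the iterates $\alpha^k(u_+)\in U$ for $0\le k\le n$. Concatenating $\alpha^{n-1}(u_+),\ldots,u_+$ with a regressive trajectory of $u_+$ in $U$ produces a regressive trajectory of $\alpha^n(u_+)$ eventually in $U$, so $\alpha^n(u_+)\in U_{++}$; tidiness below via Lemma~\ref{lem:tidy_below}\ref{item:lem:tidy_below:U_+} then delivers $\alpha^n(u_+)\in U_+$. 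Using the surjectivity $\alpha^n(U_+\cap U_{-n})=U_+$ (as extracted from the proof of Lemma~\ref{lem:wil_lem4_graph}), pick $y\in U_+\cap U_{-n}$ with $\alpha^n(y)=\alpha^n(u_+)$; the elements $y$ and $u_+$ then act identically on $v_{-n}$, and a careful coset argument propagated along the stabiliser chain $(U\cap\alpha^{-k}(U))_{0\le k\le n}$ in $U$ upgrades this to agreement on every $v_{-k}$, giving $u_+\in U_{-n}$ as required.

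For the backward direction, in-valency~$1$ immediately yields tidiness below: if $u\in U_{--}\cap U$, then $\alpha^k(u)\in U$ for all $k\ge N$ for some $N$, so by the convexity statement applied at step $N$ we obtain $\alpha^k(u)\in U$ for every $0\le k\le N$, hence for every $k\ge 0$, whence $u\in U_-$. Lemma~\ref{lem:tidy_below}(i) then supplies $U_{--}$ closed. Constant out-valency $c$ yields $[U:U_{-n}]=c^n$, while Theorem~\ref{thm:tidy_above_exists} supplies $N$ with $U_{-N}$ tidy above and $U_Nv_{-1}=U_+v_{-1}$; applying Proposition~\ref{prop:out_index} to $U_{-N}$, using Lemma~\ref{lem:wil_lem2} to identify $(U_{-N})_{-m}=U_{-N-m}$, and bootstrapping via the transitivity of $U_{-N}$ on arcs (Lemma~\ref{lem:tidy_above_transitive}) together with the orbit count coming from in-valency~$1$, one concludes $Uv_{-1}=U_+v_{-1}$, whereupon tidiness above for $U$ follows from Theorem~\ref{thm:tidy_above}.

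The principal obstacle is the step in the forward direction that upgrades $\alpha^n(u_+)\in U_+$---a statement asserting only the existence of \emph{some} $\alpha$-regressive trajectory of $\alpha^n(u_+)$ in $U$---to the concrete conclusion that the specific iterates $\alpha^k(u_+)$ lie in $U$ for $1\le k\le n-1$. Bridging this gap requires the joint use of Lemma~\ref{lem:tidy_below}\ref{item:lem:tidy_below:U_+}, the surjectivity of $\alpha^n$ on $U_+\cap U_{-n}$, and the stabiliser structure of $\Gamma_+$, where $\alpha$'s possible non-injectivity is controlled by working with the $U$-stabilisers of the vertices along the arc $v_0\to v_{-1}\to\cdots\to v_{-n}$ rather than with kernels of $\alpha^k$ directly.
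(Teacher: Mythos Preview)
Your setup contains a computational error that propagates through the argument. The out-valency of $\Gamma_{+}$ at $v_{-n}$ is $[U\cap\alpha^{-n}(U):U\cap\alpha^{-n}(U)\cap\alpha^{-n-1}(U)]$, not $[U_{-n}:U_{-n-1}]$; similarly the in-valency at $v_{-n-1}$ is $[U\cap\alpha^{-n-1}(U):U\cap\alpha^{-n-1}(U)\cap\alpha^{-n}(U)]$, not $[U\cap\alpha^{-n-1}(U):U_{-n-1}]$. Your formulas only hold \emph{after} the convexity condition $U\cap\alpha^{-n}(U)=U_{-n}$ is established, so you cannot use them to characterise constant out-valency independently of in-valency~$1$. (The equivalence ``in-valency~$1$ everywhere $\Leftrightarrow$ convexity'' is salvageable by induction on depth, but not via the formula you wrote.)

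The substantive gap is the step you yourself flag as the ``principal obstacle''. Having $y\in U_{+}\cap U_{-n}$ with $\alpha^{n}(y)=\alpha^{n}(u_{+})$ gives only $y^{-1}u_{+}\in\ker(\alpha^{n})$, and there is no reason for $\ker(\alpha^{n})\cap U$ to lie in $U_{-n}$; the proposed ``careful coset argument propagated along the stabiliser chain'' amounts to assuming in-valency~$1$ at lower depths, which is what you are trying to prove. The paper avoids this trap by arguing for in-valency~$1$ directly by contradiction: if some $v_{-i}$ has in-valency $\ge 2$, pick $u\in U_{+}$ (via Theorem~\ref{thm:tidy_above}) fixing $v_{-i}$ but not $v_{-i+1}$, and observe that $\alpha^{i}(u)\in U_{++}\cap U=U_{+}\subseteq\alpha(U)$ while $u\notin\alpha^{-i+1}(U)$ forces $\alpha^{i}(u)\notin\alpha(U)$. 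For the converse direction, the paper does not detour through Theorem~\ref{thm:tidy_above_exists} and an unspecified bootstrap; it shows directly that the tree structure and constant out-valency give $|Uv_{-1}|=|U_{-i}v_{-i-1}|=|U_{i}v_{-1}|$ for every $i$ (the second equality via the injection induced by $\alpha^{i}$), and then a compactness argument in $U$ extracts $u_{+}\in U_{+}$ with $u_{+}v_{-1}=uv_{-1}$, giving $Uv_{-1}=U_{+}v_{-1}$ and hence tidiness above by Theorem~\ref{thm:tidy_above}. Your backward argument for tidiness below via convexity is fine and essentially the same as the paper's tree-path-fixing argument.
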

\begin{proof}
First, assume that $U$ is tidy for $\alpha$. Notice that for a given $i\le 0$, the in- and out-valency is constant among the collection of vertices $\{uv_{i}\mid u\in U\}$ given that $U$ acts on $\Gamma_{+}$ by automorphisms.

Concerning in-valencies it therefore suffices to show that each $v_{-i}$ for $i\ge 1$ has in-valency equal to one, note that Lemma \ref{lem:equal_vert_in_gamma} already implies that $|\mathrm{in}_{\Gamma_{+}}(v_0)| = 0$. Suppose otherwise, that is $|\mathrm{in}_{\Gamma_{+}}(v_{-i})|\ge 2$ for some $i\ge 1$. Then there is $u\in U_{v_{-i}}\subseteq\alpha^{-i}(U)$ such that $u v_{-i+1}\neq v_{-i+1}$. By Theorem \ref{thm:tidy_above} we may assume that $u\in U_{+}$. Now consider $u':=\alpha^{i}(u)\in U_{++}\cap U$. Since $U$ is tidy below, Lemma \ref{lem:tidy_below} shows that $u'\in U_{+}=U_{++}\cap U$. But $u\not\in\alpha^{-i+1}(U)$ and hence $u'=\alpha^{i}(u)\not\in\alpha(U)\supseteq U_{+}$, a contradiction. Thus $\Gamma_{+}$ is a directed tree.

Concerning out-valencies, we may also restrict our attention to $\{v_{-i}\mid i\in\bbN_{0}\}$. For $i\in\bbN$, we have $|\mathrm{out}_{\Gamma_{+}}(v_{-i})|=[\alpha^{-i}(U)\cap U:\alpha^{-i}(U)\cap U\cap\alpha^{-i-1}]$ which equals $[U:U_{-1}]=|\out_{\Gamma_{+}}(v_{0})|$ by Proposition \ref{prop:out_index}.

\vspace{0.2cm}
Now assume that $\Gamma_{+}$ has all the stated properties. Since $\Gamma_{+}$ is a tree, we have $U_{--}\cap U\subseteq U_{-}$ while the reverse inclusion holds by definition. Hence $U_{--}$ is closed by Lemma \ref{lem:tidy_below} and $U$ is tidy below. Combining the constant out-valency assumption with the fact that $\Gamma_{+}$ is a tree we obtain the equality $|Uv_{-1}|=|U_{-i}v_{-i-1}|$. Next, $|U_{-i}v_{-i-1}|=|U_{i}v_{-1}|$ since $|U_{i}v_{-1}|\le|Uv_{-1}|$ and due to the following observation: If $u\in U_{-i}$ is such that $uv_{-i-1}\neq v_{-i-1}$, then $\alpha^{i}(u)\in\alpha^{i}(U_{-i})=U_{i}$ by Lemma \ref{lem:wil_lem2} and $\alpha^{n}(u)v_{-1}\neq v_{-1}$. Thus $|U_{i}v_{-1}|\ge|U_{-i}v_{-i-1}|$. Overall, we conclude $|Uv_{-1}|=|U_{i}v_{-1}|$.

Finally, to see that the above implies $|Uv_{-1}|=|U_{+}v_{-1}|$, let $u\in U$. Then for every $i\in\bbN$ there is $u_{i}\in U_{i}$ with $uv_{-1}=u_{i}v_{-1}$. The sequence $(u_{i})_{i\in\bbN}$ is contained in $U$ and hence admits a convergent subsequence. Any such subsequence converges to an element $u_{+}\in\bigcap_{i\ge 0}U_{i}=U_{+}$ which is such that $u_{+}^{-1}u\in U_{v_{-1}}$. Theorem \ref{thm:tidy_above} now implies that $U$ is tidy above.
\end{proof}

The following Lemma is a useful test of tidiness as it relies only on calculating inverse images and indices. It is, in a sense, an algebraic way to see if $\Gamma_{+}$ satisfies the requirements of Theorem \ref{thm:tidy_tree}. We apply it multiple times in upcoming sections.

\begin{lemma}\label{lem:tidy_powers}
Let $G$ be a t.d.l.c. group, $\alpha\in\End(G)$ and $U\le G$ compact open. Then $U$ is tidy for $\alpha$ if and only if $[U\!:\!U\!\cap\!\alpha^{-n}(U)]\!=\![U\!:\!U\!\cap\!\alpha^{-1}(U)]^{n}$ for all $n\!\in\!\bbN$. In particular, if $U$ is tidy for $\alpha$, then $U$ is tidy for $\alpha^k$ for all $k\in\bbN$.
\end{lemma}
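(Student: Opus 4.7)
The plan is to set up the chain of inequalities
\[[U:U\cap\alpha^{-n}(U)]\le[U:U_{-n}]=\prod_{i=0}^{n-1}[U_{-i}:U_{-i-1}]\le[U:U_{-1}]^{n}\]
and then to show that tidiness of $U$ is equivalent to equality at both ends. The first $\le$ follows from $U_{-n}\subseteq U\cap\alpha^{-n}(U)$, while the last is a consequence of the injection $U_{-i}/U_{-i-1}\hookrightarrow U/U_{-1}$ induced by $\alpha^{i}$ (well-definedness and injectivity are elementary). Both inequalities are unconditional and reduce the task to controlling when each becomes an equality.

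For the forward direction I would split according to whether $\{v_{-i}\mid i\in\bbN_{0}\}$ is finite or infinite, mirroring the structure of the section. Tidiness implies tidy above, so Proposition \ref{prop:out_index} turns the right-hand inequality into an equality in either case. For the left-hand inequality in the infinite case, Theorem \ref{thm:tidy_tree} realises $\Gamma_{+}$ as a rooted directed tree; any $u\in U\cap\alpha^{-n}(U)$ induces an automorphism of $\Gamma_{+}$ fixing both $v_{0}$ and $v_{-n}$, and uniqueness of arcs in a tree forces $u$ to fix each intermediate $v_{-k}$, placing $u$ in $U_{-n}$. In the finite case the sequence $U_{-n}$ eventually stabilises, so $[U_{-i}:U_{-i-1}]=1$ for large $i$; combined with Proposition \ref{prop:out_index} this forces $[U:U_{-1}]=1$, hence $\alpha(U)\le U$, and every term of the chain collapses to $1=1^{n}$.

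For the backward direction, equality at both ends of the chain is equivalent to (a) $U\cap\alpha^{-n}(U)=U_{-n}$ for every $n$, and (b) $[U_{-i}:U_{-i-1}]=[U:U_{-1}]$ for every $i$. Statement (a) yields tidiness below via Lemma \ref{lem:tidy_below}: if $u\in U_{--}\cap U$, then $\alpha^{m}(u)\in U$ for all sufficiently large $m$, so $u\in U\cap\alpha^{-m}(U)=U_{-m}$; hence $\alpha^{k}(u)\in U$ for all $0\le k\le m$, and letting $m\to\infty$ places $u$ in $U_{-}$. Statement (b) means the injection $\alpha^{i}:U_{-i}/U_{-i-1}\hookrightarrow U/U_{-1}$ is bijective, and using Lemma \ref{lem:wil_lem2} this rewrites as $U_{i}U_{-1}=U$ for every $i$. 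Each of the finitely many $U_{-1}$-cosets in $U$ therefore meets every $U_{i}$, and since $U$ is compact and each $U_{i}$ is closed, an accumulation-point argument produces a representative in $U_{+}=\bigcap_{i}U_{i}$, yielding $U_{+}U_{-1}=U$ and hence tidy above by Theorem \ref{thm:tidy_above}.

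The main obstacle will be the compactness step in the backward direction, as it is the one place where combinatorial information about finite indices must be upgraded to a statement about the limit subgroup $U_{+}$. Once the main equivalence is established, the in-particular clause is immediate: writing $d:=[U:U\cap\alpha^{-1}(U)]$, the forward direction gives $[U:U\cap\alpha^{-kn}(U)]=d^{kn}=(d^{k})^{n}=[U:U\cap\alpha^{-k}(U)]^{n}$ for all $n\in\bbN$, and the backward direction applied to the endomorphism $\alpha^{k}$ then certifies $U$ as tidy for $\alpha^{k}$.
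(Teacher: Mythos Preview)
Your argument is correct, and the ``in particular'' clause is handled exactly as in the paper. The forward direction is close in spirit to the paper's proof, though you phrase the infinite case as showing $U\cap\alpha^{-n}(U)=U_{-n}$ via uniqueness of arcs in the tree $\Gamma_{+}$, whereas the paper simply counts $|Uv_{-n}|=d^{n}$ directly.

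The backward direction is where your approach genuinely diverges. The paper stays with the graph $\Gamma_{+}$: it observes that $|\out_{\Gamma_{+}}(v_{0})|=d$, that every other vertex has out-valency at most $d$, and that $|Uv_{-n}|=d^{n}$ forces every out-valency to equal $d$; this makes $\Gamma_{+}$ a regular rooted tree and Theorem~\ref{thm:tidy_tree} finishes. You instead split the chain equality into the two algebraic conditions (a) $U\cap\alpha^{-n}(U)=U_{-n}$ and (b) $[U_{-i}:U_{-i-1}]=[U:U_{-1}]$, then prove tidy below from (a) and tidy above from (b) separately, the latter via a compactness argument producing representatives in $U_{+}$. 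Your route is more self-contained in that it avoids invoking the full graph characterisation Theorem~\ref{thm:tidy_tree}, at the cost of redoing the accumulation-point step that already appears in the proof of that theorem. The paper's route is shorter precisely because that work has been packaged.

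One presentational point: you invoke Lemma~\ref{lem:tidy_below} to pass from $U_{--}\cap U=U_{-}$ to ``$U_{--}$ is closed'', but that lemma carries a tidy-above hypothesis which you only establish afterwards from (b). The direction you need is in fact elementary ($U_{-}$ is closed and $U$ is open, so $U_{--}$ is closed by \cite[III.2.1, Proposition~4]{Bou98}) and does not use tidy above; either say this explicitly, or reorder so that (b) is treated first.
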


\begin{proof}
First, assume that $U$ is tidy for $\alpha$. If $\{v_{-i}\mid i\in\bbN_{0}\}$ is finite, then for some $N\in\bbN_{0}$ we have $[U_{-N}: U_{-N-1}] = 1$ by Lemma \ref{lem:finite_vertices_case}. Proposition \ref{prop:out_index} shows that $1=[U:U\cap\alpha^{-1}(U)]$ which implies $\alpha^{-1}(U)\supseteq U$. Therefore $\alpha^{-n}(U)\supseteq U$ for all $n\in\bbN$ and the both assertions follow. Now assume that $\{v_{-i}\mid i\in\bbN_{0}\}$ is infinite. Then $\Gamma_{+}$ is a rooted directed tree with constant out-valency $d$. The vertices at depth $n$ in this rooted directed tree are precisely the vertices $uv_{-n}$ $(u\in U)$ by Lemma \ref{lem:gamma_+_depth} and the definition of $\Gamma_{+}$. We obtain
\begin{displaymath}
  [U:U\cap\alpha^{-n}(U)]=|Uv_{-n}|=d^{n}=[U:U\cap\alpha^{-1}(U)]^{n}
\end{displaymath}
as desired. Conversely, assume that $[U:U\cap\alpha^{-n}(U)]=[U:U\cap\alpha^{-1}(U)]^{n}$ for all $n\in\bbN$ and consider the graph $\Gamma_{+}$. We have $d:=|\mathrm{out}_{\Gamma_{+}}(v_{0})|=[U:U\cap\alpha^{-1}(U)]$ as before. By definition of $\Gamma_{+}$, the out-valency of any other vertex is at most $d$. But
\begin{displaymath}
  |Uv_{-n}|=[U:U\cap\alpha^{-n}(U)]=[U:U\cap\alpha^{-1}(U)]^{n}=d^{n}
\end{displaymath}
by assumption. Since $Uv_{-n}$ is precisely the set of vertices at depth $n$ in $\Gamma_{+}$ by Lemma \ref{lem:gamma_+_depth}, we conclude that every vertex has out-valency $d$. Hence $\Gamma_{+}$ is a tree of constant in-valency $1$, excluding $v_{0}$, and $U$ is tidy for $\alpha$ by Theorem~\ref{thm:tidy_tree}.

For the final claim, note that if $U$ is tidy for $\alpha$, then $[U:U\cap \alpha^{-n}(U)] = d^n$ for all $n\in\bbN_{0}$. Thus, given $k\in\bbN$, we have
\[[U:U\cap \alpha^{-kn}(U)] = d^{kn} = (d^k)^n = [U:U\cap \alpha^{-k}(U)]^n.\] 
By the first claim wee see that $U$ is tidy for $\alpha^k$.
\end{proof}

\section{A Graph-Theoretic Tidying Procedure}\label{sec:tidying_procedure}

Let $G$ be a totally disconnected, locally compact group and let $\alpha\in\End(G)$. We show that there is a compact open subgroup of $G$ which is tidy for $\alpha$. 

The proof is algorithmic: Starting from an arbitrary compact open subgroup we construct a locally finite graph $\Gamma_{++}$. A certain quotient, inspired by \cite{Moe00}, of this graph has a connected component isomorphic to a regular rooted tree which admits an action of a subgroup of $G$. The stabilizer of the root in this tree is the desired tidy subgroup.

\vspace{0.2cm}
For the remainder of the section, fix $U\le G$ compact open. Referring to Lemma~\ref{lem:finite_vertices_case}, we shall assume throughout that $\{\alpha^{-i}(U)\mid i\in\bbN_{0}\}$ is infinite. By Theorem \ref{thm:tidy_above_exists} we may also assume that $U$ is tidy above for $\alpha$.

\subsection{The Graph $\Gamma_{++}$}
Consider the graph $\Gamma_{++}$ defined by
\[
  V(\Gamma_{++})\!=\!\{uv_{-i}\mid u\in U_{++}\hbox{, }i\in\bbN_0\}, \text{ and}
\]
\vspace{-0.45cm}
\[
  E(\Gamma_{++})\!=\!\{(uv_{-i},uv_{-i-1})\mid u\in U_{++}\hbox{, } i\in\bbN_0\}.
\]

The following remark will be used in the proof of Theorem \ref{thm:tidy_existence}.

\begin{remark}\label{rem:rel_gamma_++_gamma}
Note that $\Gamma_{++}$ is a subgraph of $\Gamma$. Also, if $U$ is tidy above for $\alpha$, the graphs $\Gamma_{+}$ and $\Gamma$ have the same out-valency by Proposition \ref{prop:out_index}, given that
\begin{align*}
|\out_{\Gamma}(v_{-i})|&=[\alpha^{-i}(U):\alpha^{-i-1}(U)\cap\alpha^{-i}(U)]\text{, and} \\
|\out_{\Gamma_{+}}(v_{-i})|&=[\alpha^{-i}(U)\cap U:\alpha^{-i-1}(U)\cap\alpha^{-i}(U)\cap U].
\end{align*}
Consequently, $\mathrm{desc}_{\Gamma}(v_{0})=\Gamma_{+}\subseteq \mathrm{desc}_{\Gamma_{++}}(v_{0})\subseteq\mathrm{desc}_{\Gamma}(v_{0})=\Gamma_{+}$
and therefore $\mathrm{desc}_{\Gamma_{++}}(v_{0})=\mathrm{desc}_{\Gamma}(v_{0})=\Gamma_{+}$.
\end{remark}

Note that $U_{++}$ acts on $\Gamma_{++}$ by automorphisms. We now define an injective graph endomorphism of $\Gamma_{++}$ that appears frequently. Let $uv_{i}\in V(\Gamma_{++})$ where $u\in U_{++}$. Since $\alpha(U_{++}) = U_{++}$, there exists $u'\in U_{++}$ such that $\alpha(u') = u$. Define $\rho(uv_{i}) = u'v_{i-1}$. The following proposition summarizes the properties of $\rho$ and includes justification that $\rho$ is well-defined.

\begin{proposition}\label{prop:properties_of_induced_map}
Retain the above notation. The map $\rho$ is a graph isomorphism from $\Gamma_{++}$ to $\rho(\Gamma_{++})$ where
\[
  V(\rho(\Gamma_{++}))=\{uv_{-i}\mid u\in U_{++}\hbox{, }i \in\bbN\}, \text{ and}
\]
\vspace{-0.4cm}
\[
  E(\rho(\Gamma_{++}))=\{(uv_{-i},uv_{-i-1})\mid u\in U_{++}\hbox{, }i \in\bbN\}.
\]
\end{proposition}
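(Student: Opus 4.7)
The plan is to verify that $\rho$ is well-defined on vertices, that it extends naturally to edges, that its image on vertices and edges matches the description of $\rho(\Gamma_{++})$ given in the proposition, and finally that it is injective with an inverse that also respects the graph structure.

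The main obstacle is well-definedness, since the formula $\rho(uv_{-i})=u'v_{-i-1}$ depends on two choices: the coset representative $u\in U_{++}$ of the vertex $uv_{-i}$ and the preimage $u'\in U_{++}$ of $u$ under $\alpha$. These two indeterminacies collapse once one observes the following. First, since $v_{-i}=\alpha^{-i}(U)$ is a subgroup, two representatives $u_{1},u_{2}\in U_{++}$ of the same vertex satisfy $u_{1}^{-1}u_{2}\in\alpha^{-i}(U)$. Second, any two preimages $u_{1}',u_{2}'\in U_{++}$ (which exist because $\alpha(U_{++})=U_{++}$) of elements $u_{1},u_{2}$ satisfy $\alpha((u_{1}')^{-1}u_{2}')=u_{1}^{-1}u_{2}\in\alpha^{-i}(U)$, hence $(u_{1}')^{-1}u_{2}'\in\alpha^{-i-1}(U)$. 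In particular, if $u_{1}v_{-i}=u_{2}v_{-i}$ then $u_{1}'v_{-i-1}=u_{2}'v_{-i-1}$, so $\rho$ is well-defined on vertices. The special case $u_{1}=u_{2}$ handles independence from the choice of preimage (and amounts to $\ker\alpha\subseteq\alpha^{-i-1}(U)$ for all $i\in\bbN_{0}$, which is immediate from $\alpha^{i+1}(\ker\alpha)=\{1\}\subseteq U$).

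Having established well-definedness, the image identification is straightforward. By construction, the vertices of $\rho(\Gamma_{++})$ form the set $\{u'v_{-i-1}\mid u'\in U_{++},\ i\in\bbN_{0}\}$, which, reindexing $j:=i+1$, is exactly $\{u'v_{-j}\mid u'\in U_{++},\ j\in\bbN\}$. To handle edges I would exploit the freedom in choosing preimages by fixing a single $u'$ with $\alpha(u')=u$ and using it at both endpoints of an edge $(uv_{-i},uv_{-i-1})$; well-definedness guarantees this choice is immaterial. Then $\rho$ sends this edge to $(u'v_{-i-1},u'v_{-i-2})$, which is of the prescribed form, and conversely every such edge arises this way.

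Injectivity on vertices comes from combining Lemma~\ref{lem:equal_vert_in_gamma} (which forces equality of depth indices) with the computation $\alpha(\alpha^{-i-1}(U))\subseteq\alpha^{-i}(U)$: if $\rho(u_{1}v_{-i})=\rho(u_{2}v_{-j})$ then $i=j$ and $(u_{1}')^{-1}u_{2}'\in\alpha^{-i-1}(U)$, hence applying $\alpha$ yields $u_{1}^{-1}u_{2}\in\alpha^{-i}(U)$. Injectivity on edges follows from injectivity on vertices since edges are determined by their endpoints. The incidence relations $o$ and $t$ are preserved by inspection of the formula for $\rho$ on edges. The inverse map, sending $u'v_{-i-1}\mapsto\alpha(u')v_{-i}$ and $(u'v_{-i-1},u'v_{-i-2})\mapsto(\alpha(u')v_{-i},\alpha(u')v_{-i-1})$, is an honest graph morphism (no ambiguity since $\alpha$ is a function), and inverts $\rho$, completing the proof that $\rho$ is a graph isomorphism onto $\rho(\Gamma_{++})$.
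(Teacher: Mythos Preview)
Your proof is correct and follows essentially the same approach as the paper's: both establish well-definedness via the computation $\alpha((u_1')^{-1}u_2')=u_1^{-1}u_2\in\alpha^{-i}(U)\Rightarrow(u_1')^{-1}u_2'\in\alpha^{-i-1}(U)$, then check injectivity by reversing this, and describe the image using $\alpha(U_{++})=U_{++}$. One small omission: in your well-definedness argument you treat the depth index $i$ as given, but a priori a vertex could have two presentations $u_1v_{-i}=u_2v_{-j}$ with $i\neq j$; the paper invokes Lemma~\ref{lem:equal_vert_in_gamma} here (you do invoke it for injectivity, so you clearly have it in hand). The only substantive stylistic difference is in the last step: the paper shows $\rho$ maps non-edges to non-edges (hence the inverse bijection is a graph morphism), whereas you write down the inverse $u'v_{-i-1}\mapsto\alpha(u')v_{-i}$ explicitly and observe it is a morphism---an equally valid and arguably more direct route.
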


\begin{proof}
We first show $\rho$ is well-defined. Suppose $u_0v_{-i}, u_1v_{-j}\in V(\Gamma_{++})$ represent the same vertex. Lemma \ref{lem:equal_vert_in_gamma} gives $i = j$ and so $u_{0}^{-1}u_1\!\in\! \alpha^{-i}(U)$. Choose $w_0,w_1\in U_{++}$ with $\alpha(w_i) = u_i$ for $i\in\{0,1\}$. Then $\alpha(w_0^{-1}w_{1}) = u_{0}^{-1}u_1\in \alpha^{-i}(U)$ and so $w_0^{-1}w_1\in \alpha^{-i-1}(U)$. This implies $w_0v_{-i-1} = w_{1}v_{-i - 1}$. This shows that setting $\rho(u_0v_{-i}) = w_0v_{-i-1}$ is well-defined. 

To see that $\rho$ is injective suppose that $\rho(u_0v_{-i}) = \rho(u_1v_{-j})$. Then there are $w_0$ and $w_1$ such that $w_0v_{-i - 1}=w_{1}v_{-j - 1}$ and $\alpha(w_i) = u_i$ $(i\in\{0,1\})$. Lemma \ref{lem:equal_vert_in_gamma} gives $i = j$ and so $w_0^{-1}w_1\in \alpha^{-i-1}(U)$. We have
$\alpha(w_0^{-1}w_1) = u_0^{-1}u_1\in \alpha^{-i}(U)$, thus $u_0v_{-1} = u_1v_{-i}$.

As to $V(\rho(\Gamma_{++}))$ we have, $V(\rho(\Gamma_{++}))\supseteq\{uv_{-i}\mid u\in U_{++},\ i\in\bbN\}$ by definition as $\alpha(U_{++})=U_{++}$. Equality follows from Lemma~\ref{lem:equal_vert_in_gamma}.

To see that $\rho$ preserves the edge relation, let $(uv_{-i},uv_{-i - 1})\in E(\Gamma_{++})$. Choose $u'\!\in\! U_{++}$ with $\alpha(u')\!=\! u$. Then $(\rho(uv_{-i}),\rho(uv_{-i-1}))\!=\!(u'v_{-i-1}, u'v_{-i-2})\in E(\Gamma_{++})$. Thus $\rho$ is a graph morphism.

Again, we have $E(\rho(\Gamma_{++}))\supseteq\{(uv_{-i},uv_{-i-1})\mid u\in U_{++}\hbox{, }i \in\bbN\}$ by definition as $\alpha(U_{++})=U_{++}$ and equality by Lemma \ref{lem:equal_vert_in_gamma}.

Finally, to see that $\rho$ is invertible on $\rho(\Gamma_{++})$, we show that $\rho$ maps non-edges to non-edges. Suppose $(u_{0}v_{-n},u_{1}v_{-m})\in V(\Gamma_{++})\times V(\Gamma_{++})$ is not an edge of $\Gamma_{++}$. Then either $m\neq n+1$, or $m=n+1$ but $u_{1}\alpha^{-m}(U)\neq u_{0}\alpha^{-m}(U)$, i.e. $u_{0}^{-1}u_{1}\not\in\alpha^{-m}(U)$. In the first case, $\rho(u_{0}v_{-n},u_{1}v_{-m})$ is not an edge of $\Gamma_{++}$ by Lemma \ref{lem:equal_vert_in_gamma} and the definition of the edge set of $\Gamma_{++}$. In the second case, we have $\rho(u_{0}v_{-n})=u_{0}'v_{-n-1}$ and $\rho(u_{1}v_{-m})=u_{1}'v_{-m-1}$ for some $u_{0}',u_{1}'\in U_{++}$ with $\alpha(u_{0}')=u_{0}$ and $\alpha(u_{1}')=u_{1}$. But $u_{1}'\alpha^{-m-1}\neq u_{0}'\alpha^{-m-1}(U)$ because we have $u_{0}'^{-1}u_{1}'\not\in\alpha^{-m-1}(U)$ given that $u_{0}^{-1}u_{1}\not\in\alpha^{-m}(U)$.
\end{proof}

The following two results capture arc-transitivity of the action of $U_{++}$ on $\Gamma_{++}$.

\begin{lemma}\label{lem:basic_arc_trans}
Retain the above notation. Let $\gamma_0$ and $\gamma_1$ be arcs of equal length in $\Gamma_{++}$ and with origin $uv_0$ ($u\in U_{++})$. Then there is $g\in U_{++}$ such that $g\gamma_{0} = \gamma_{1}$.
\end{lemma}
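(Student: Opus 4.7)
The plan is to reduce the statement to arc-transitivity at the root $v_0$ via translation, and then upgrade the $U$-transitivity of Lemma~\ref{lem:tidy_above_transitive} to an element of $U_+\le U_{++}$.

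First, observe that $U_{++}$ is a subgroup of $G$ which acts on $\Gamma_{++}$ by automorphisms. Since $u\in U_{++}$, its inverse $u^{-1}$ lies in $U_{++}$ and translates $\gamma_0,\gamma_1$ to arcs $u^{-1}\gamma_0,u^{-1}\gamma_1$ of the same length originating at $v_0$. If we produce $h\in U_{++}$ with $hu^{-1}\gamma_0=u^{-1}\gamma_1$, then $g:=uhu^{-1}\in U_{++}$ satisfies $g\gamma_0=\gamma_1$. So it suffices to treat the case $u=e$.

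Next, by Remark~\ref{rem:rel_gamma_++_gamma}, $\desc_{\Gamma_{++}}(v_0)=\Gamma_+$, whence $\gamma_0,\gamma_1$ are arcs in $\Gamma_+$ issuing from $v_0$. By Lemma~\ref{lem:tidy_above_transitive} there exists $g'\in U$ with $g'\gamma_0=\gamma_1$, and the task is to replace $g'$ by some $h\in U_+\le U_{++}$. An inspection of the edge structure of $\Gamma_+$ shows that every arc of length $n$ issuing from $v_0$ has the form $(v_0,wv_{-1},wv_{-2},\dots,wv_{-n})$ for some $w\in U$, and that two such arcs coincide exactly when the associated cosets in $U/U_{-n}$ agree; under this identification, the natural $U$-action on arcs from $v_0$ is left multiplication on $U/U_{-n}$. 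It therefore suffices to show that $U_+$ acts transitively on $U/U_{-n}$, equivalently that $U_+U_{-n}=U$. Since $U$ is tidy above, $U=U_+U_-$, and $U_-\le U_{-n}$ yields $U=U_+U_{-n}$, as required.

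The main subtlety is the upgrade step from $g'\in U$ to $h\in U_+$. A direct tidy-above factorisation $g'=g_+g_-$ does not work, because $g_-\in U_-$ need not fix the intermediate vertices $u'v_{-i}$ of $\gamma_0$ when $u'\not\in\alpha^{-i}(U)$. The clean resolution is to reparametrise arcs from $v_0$ by cosets in $U/U_{-n}$ and to note that $U_+$ already surjects onto this quotient under the tidy-above hypothesis.
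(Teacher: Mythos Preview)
Your proof is correct and follows essentially the same approach as the paper: translate by $u^{-1}$ to the root, identify the arcs as lying in $\Gamma_{+}=\desc_{\Gamma_{++}}(v_0)$ via Remark~\ref{rem:rel_gamma_++_gamma}, invoke Lemma~\ref{lem:tidy_above_transitive} for $U$-transitivity, and then upgrade to $U_{+}\le U_{++}$. The only cosmetic difference is that the paper cites Theorem~\ref{thm:tidy_above}\ref{item:thm_tidy_above_U_+_action} for the upgrade, whereas you unpack the same content explicitly via the coset identification $U/U_{-n}$ and the equality $U_{+}U_{-n}=U$; your remark that a naive factorisation $g'=g_{+}g_{-}$ does not directly apply to a non-standard arc is a fair observation, and your coset argument (equivalently, routing through the standard arc $[v_{0},v_{-n}]$) is exactly what makes the paper's citation work.
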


\begin{proof}
Note that $u^{-1}\gamma_{i}$ $(i\in\{0,1\})$ is an arc with origin $v_0$ and thus is contained in $\desc_{\Gamma_{++}}(v_0)$. Remark \ref{rem:rel_gamma_++_gamma} and Lemma \ref{lem:tidy_above_transitive} show that there exists $u'\in U$ such that $u'u^{-1}\gamma_{0} = u^{-1}\gamma_{1}$. By part \ref{item:thm_tidy_above_U_+_action} of Theorem \ref{thm:tidy_above}, we may choose $u'\in U_{+}$. Then $uu'u^{-1}\in U_{++}$ and $g:=uu'u^{-1}$ serves.
\end{proof}

In the following, we write $[v_0,v_{-k}]$ for the arc $(v_0,\ldots, v_{-k})$. 

\begin{proposition}\label{prop:highly_arc_trans}
Retain the above notation. Let $\gamma_0$ and $\gamma_1$ be arcs in $\Gamma_{++}$ of equal length. Then there are $u\in U_{++}$ and $n\in\bbN_{0}$ with either $u\rho^{n}\gamma_{0} = \gamma_1$ or $u\rho^{n}\gamma_{1} = \gamma_0$. If $\gamma_0$ and $\gamma_1$ both terminate at $v_{-i}$ ($i\in\bbN$), we may choose $n=0$ and $u\in U_{++}\cap U_{--}$.
\end{proposition}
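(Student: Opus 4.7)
The plan is to reduce the general case to Lemma~\ref{lem:basic_arc_trans} applied at the root $v_0$ of $\Gamma_+$, using $\rho$ as a bridge to first shift both arcs into the tame region $\Gamma_+$, and to handle the terminal clause via a compactness argument that extends the arcs indefinitely past $v_{-i}$.

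For the general case, assume without loss of generality that the origins of $\gamma_0$ and $\gamma_1$ have depths $j_0\le j_1$ (otherwise swap their roles) and set $n:=j_1-j_0$, so that $\rho^n\gamma_0$ and $\gamma_1$ share the origin depth $j_1$. Since $U_{++}=\bigcup_{N\in\bbN}\alpha^N(U_+)$, for $N$ large enough the origins of both arcs admit $\alpha^N$-preimages in $U_+$; hence the arcs $\rho^{N+n}\gamma_0$ and $\rho^N\gamma_1$ lie entirely in $\Gamma_+=\desc_{\Gamma_{++}}(v_0)$ by Remark~\ref{rem:rel_gamma_++_gamma}. By Lemma~\ref{lem:gamma_+_depth}, we may prepend to each a length-$(j_1+N)$ arc in $\Gamma_+$ from $v_0$ to its origin, producing arcs $\tilde\gamma_0,\tilde\gamma_1$ of equal length with common origin $v_0$.

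Now Lemma~\ref{lem:basic_arc_trans} applied with $u=e$, whose proof in fact yields an element in $U_+$, provides $g\in U_+$ with $g\tilde\gamma_0=\tilde\gamma_1$; restricting to the tails gives $g\cdot\rho^N(\rho^n\gamma_0)=\rho^N\gamma_1$. The compatibility $\rho(h\cdot x)=h'\cdot\rho(x)$ whenever $\alpha(h')=h$, which is immediate from the definition of $\rho$, iterates to $\rho^N(\alpha^N(g)\cdot\sigma)=g\cdot\rho^N(\sigma)$ for any vertex, and hence any arc, $\sigma$. Taking $\sigma=\rho^n\gamma_0$, injectivity of $\rho^N$ from Proposition~\ref{prop:properties_of_induced_map} forces $\alpha^N(g)\cdot\rho^n\gamma_0=\gamma_1$, so $u:=\alpha^N(g)\in\alpha^N(U_+)\subseteq U_{++}$ serves.

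For the terminal case, equal length and shared terminus $v_{-i}$ force $j_0=j_1$ and hence $n=0$. Apply the above construction to the extended pair $(\gamma_0^{(M)},\gamma_1^{(M)})$, obtained by concatenating $\gamma_0$ and $\gamma_1$ with the straight-down path $(v_{-i},v_{-i-1},\ldots,v_{-i-M})$, for each $M\in\bbN$. Because the origin depth is unchanged by the extension, the parameter $N$ can be chosen independently of $M$, yielding $u_M=\alpha^N(g_M)$ with $g_M\in U_+$ such that $u_M\gamma_0^{(M)}=\gamma_1^{(M)}$; in particular $u_M$ fixes each of $v_{-i},\ldots,v_{-i-M}$, so $u_M\in\alpha^{-i}(U_{-M})$. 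The main technical point is to pass to the limit while remaining inside $U_{++}\cap U_{--}$: compactness of $U_+$ delivers a convergent subsequence $g_{M_l}\to g\in U_+$, continuity of $\alpha^N$ gives $u_{M_l}\to u:=\alpha^N(g)\in U_{++}$, and closedness of each $\alpha^{-i}(U_{-M})$ yields $u\in\bigcap_M\alpha^{-i}(U_{-M})=\alpha^{-i}(U_-)\subseteq U_{--}$. Continuity of the $G$-action from Section~\ref{sec:perm_top} gives $u\gamma_0=\gamma_1$ in the limit.
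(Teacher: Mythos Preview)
Your proof is correct, but it takes a noticeably different route from the paper's argument, particularly in the first assertion.

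For the general case, the paper proceeds more directly: after applying $\rho^{i_0-i_1}$ to equalise origin depths, it multiplies by $u_0u_1^{-1}\in U_{++}$ to make the origins coincide at some $u_0v_{-i_0}$, then simply prepends the arc $(u_0v_0,\ldots,u_0v_{-i_0})$ and applies Lemma~\ref{lem:basic_arc_trans} as stated (which already handles origins of the form $uv_0$ for arbitrary $u\in U_{++}$). Your extra $\rho^N$-shift into $\Gamma_+$, followed by the compatibility/injectivity unshift, is correct but unnecessary: you are effectively reproving the case $u\neq e$ of Lemma~\ref{lem:basic_arc_trans} via $\rho$ rather than invoking it. What your detour buys, however, is that the element produced is visibly of the form $\alpha^N(g)$ with $g\in U_+$ and $N$ depending only on the origin coefficients---and this pays off in the second part.

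For the terminal clause the two approaches genuinely diverge. The paper reduces to straightening a single arc $\gamma$ onto $[v_0,v_{-k}]$: it first finds $u'\in U_{++}$ with $u'\gamma=[v_0,v_{-k}]$, then corrects $u'$ by a limit $w'$ of elements $w_n\in U_+$ that straighten ever-longer $u'$-twisted extensions; the product $w'u'$ lands in $U_{++}\cap U_{--}$. Your argument instead extends both $\gamma_0$ and $\gamma_1$ by the same standard tail and reuses the first-part construction verbatim, exploiting that $N$ is independent of $M$ so that the $g_M$ all live in the compact set $U_+$. Both compactness arguments are valid; yours is arguably more streamlined since the containment $u\in U_{++}$ is immediate from $u=\alpha^N(g)$, whereas the paper's assertion $w'u'\in U_{++}$ tacitly relies on $w_n\in U_+$ (coming from the proof, not the statement, of Lemma~\ref{lem:basic_arc_trans}).
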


\begin{proof}
Suppose $\gamma_0$ originates at $u_0v_{-i_0}$ and $\gamma_1$ originates at $u_1v_{-i_1}$. Without loss of generality assume $i_0\ge i_1$. Then $\rho^{i_0 - i_1}(\gamma_1)$ originates at $u_1'v_{-i_0}=\rho^{i_0 - i_1}(u_1v_{-i_1})$ for some $u_{1}'\in U_{++}$. For the first assertion it therefore suffices to show that for any two arcs $\gamma_0$ and $\gamma_1$ originating at vertices $u_0v_{-i}$ and $u_1v_{-i}$ ($u_{0},u_{1}\in U_{++}$), there exists $u\in U_{++}$ with $u\gamma_0 = \gamma_1$. Further still, by considering the image of $\gamma_{1}$ under multiplication by $u_0u_1^{-1}$, we can assume the $u_0 = u_1$. Now we can extend $\gamma_j$ to $\gamma_j'$ by concatenating on the left with the path $(u_0v_0,\ldots, u_0v_{-i})$. By Lemma \ref{lem:basic_arc_trans}, there exists $u\in U_{++}$ such that $u\gamma_0' = \gamma_1'$. We must necessarily have $u\gamma_0 = \gamma_1$.

For the second assertion, let $\gamma$ be an arc terminating in $v_{-k}$. It suffices to show that there is $g\in U_{++}\cap U_{--}$ such that $g\gamma\subseteq[v_{0},v_{-k}]$. Extending $\gamma$ if necessary, we can assume without loss of generality that $\gamma$ originates at some $uv_0$ where $u\in U_{++}$.

We now construct $g\in U_{++}\cap U_{--}$ such that $g\gamma = [v_{0}, v_{-k}]$. By Lemma \ref{lem:basic_arc_trans}, there exists $u'\in U_{++}$ such that $u'\gamma = [v_0,v_{-k}]$. Applying Lemma \ref{lem:basic_arc_trans}, for each $n\in\bbN_{0}$ there exist $w_n\in U_{++}$ such that
\[w_n(v_0,\ldots, v_{-k},u'v_{-k - 1},\ldots, u'v_{-k-n}) = [v_{0},v_{-k-n}].\]
The sequence $(w_{n})_{n\in\bbN}$ is contained in $U$ as each element fixes $v_0$. It hence admits a subsequence converging to some $w'\in U$. Put $g:=w'u'\in U_{++}$. Since $\alpha^{-l}(U)$ is closed for all $l\in\bbN_{0}$, we get $g(v_{-l}) = v_{-l}$ for all $l\ge k$. That is, $g\in U_{--}$ and $g\gamma = [v_0,v_{-k}]$.
\end{proof}

\begin{remark}\label{rem:vertex_trans}
Restricting Proposition \ref{prop:highly_arc_trans} to the case where $\gamma_0$ and $\gamma_1$ are single vertices we conclude that for any two vertices $u_0,u_1\in V(\Gamma_{++})$, there are $n\in\bbN_{0}$ and $u\in U_{++}$ such that either $u\rho^{n}(u_0) = u_1$ or $u\rho^{n}(u_1) = u_0$.
\end{remark}

We now show that $\Gamma_{++}$ is locally finite. We will need the following Lemma which is a consequence of \cite[Proposition 4]{Wil15} given that $\calL_{U}$, see \cite[Definition 5]{Wil15}, is precisely $U_{++}\cap U_{--}$.

\begin{lemma}\label{lem:compact_closure}
The closure of $U_{++}\cap U_{--}$ is compact. \qed
\end{lemma}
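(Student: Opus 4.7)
The plan is simply to derive the lemma from \cite[Proposition~4]{Wil15}, as foreshadowed in the statement of the lemma itself. That proposition asserts that the set $\calL_U$ introduced in \cite[Definition~5]{Wil15} has compact closure in $G$, so the only work is to verify the claimed identification $\calL_U = U_{++} \cap U_{--}$.

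To check this identification I would unpack the definition of $\calL_U$ from \cite{Wil15}: it consists of those $g \in G$ that admit an $\alpha$-regressive trajectory $(g_n)_{n \in \bbN_0}$ such that the resulting bi-infinite sequence $(\ldots, g_2, g_1, g, \alpha(g), \alpha^2(g), \ldots)$ is eventually contained in $U$ in both directions. I would then split this two-sided boundedness condition into its two halves. The backward half---that the regressive trajectory $(g_n)$ is eventually in $U$---is precisely the verbal characterization of $U_{++}$ recalled in Section~\ref{sec:Willis_theory}. The forward half---that $(\alpha^n(g))_{n \in \bbN_0}$ is eventually in $U$---is the characterization of $U_{--}$ given there. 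Conjoining the two descriptions yields $\calL_U = U_{++} \cap U_{--}$, after which the conclusion is immediate.

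The main (indeed only) subtlety is a careful reading of \cite[Definition~5]{Wil15} to confirm that the bilateral boundedness condition defining $\calL_U$ is exactly the conjunction of the two one-sided conditions describing $U_{++}$ and $U_{--}$; once this matching is in hand, no further argument is required, which is why the proof can be stated as a one-line citation rather than developed in detail here.
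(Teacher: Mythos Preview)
Your proposal is correct and follows exactly the paper's approach: the lemma is stated with an immediate \qed, justified in the preceding sentence as a consequence of \cite[Proposition~4]{Wil15} together with the identification $\calL_U = U_{++}\cap U_{--}$ from \cite[Definition~5]{Wil15}. Your unpacking of that identification via the verbal descriptions of $U_{++}$ and $U_{--}$ is precisely the check the paper leaves implicit.
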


The last assertion of the following proposition will be used to show that $\Gamma_{++}$ admits a well-defined ``depth'' function.
\begin{proposition}\label{prop:properties_of_gamma_++}
Retain the above notation. The graph $\Gamma_{++}$
\begin{itemize}
 \item[(i)] has constant out-valency,
 \item[(ii)] has constant in-valency among the vertices $\{uv_{-i}\mid u\in U_{++},\ i\in\bbN\}$,
 \item[(iii)] satisfies that the in-valency of $uv_{0}$ ($u\in U_{++}$) is $0$,
 \item[(iv)] is locally finite, and
 \item[(v)] satisfies that every arc from $uv_{-i}$ to $u'v_{-i - k}$ ($u,u'\in U_{++};\ i,k\in\bbN_{0})$ has length $k$.
\end{itemize}
\end{proposition}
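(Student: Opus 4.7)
I would dispatch parts (iii) and (v) directly from the edge structure together with Lemma~\ref{lem:equal_vert_in_gamma}. For (iii), any in-edge terminating at $uv_0$ would have the form $(wv_{-i},\,wv_{-i-1})$ with $wv_{-i-1}=uv_0$, and Lemma~\ref{lem:equal_vert_in_gamma} would force $i+1=0$, contradicting $i\in\bbN_0$. For (v), the same lemma makes ``depth'' a well-defined function on $V(\Gamma_{++})$: the index $i$ in any representation $uv_{-i}$ is an invariant of the vertex. Since every edge of $\Gamma_{++}$ increases depth by exactly $1$, an arc from $uv_{-i}$ to $u'v_{-i-k}$ has length exactly $k$.

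For (i) and (ii) I would combine the $U_{++}$-action on $\Gamma_{++}$ by automorphisms with the graph isomorphism $\rho\colon\Gamma_{++}\to\rho(\Gamma_{++})$ of Proposition~\ref{prop:properties_of_induced_map}. The key structural observation is that $\rho(\Gamma_{++})$ contains every vertex of $\Gamma_{++}$ of depth $\ge 1$ together with every edge of $\Gamma_{++}$ whose origin has depth $\ge 1$; the only data missing from $\rho(\Gamma_{++})$ are the depth-$0$ vertices and the edges issuing from them. Consequently the out-valency of any vertex $v$ in $\Gamma_{++}$ equals the out-valency of $\rho(v)$ in $\Gamma_{++}$, and if $v$ has depth $\ge 1$ then $\rho(v)$ has depth $\ge 2$, so every in-edge of $\rho(v)$ in $\Gamma_{++}$ already lies in $\rho(\Gamma_{++})$ and the in-valencies of $v$ and $\rho(v)$ agree as well. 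Combined with Remark~\ref{rem:vertex_trans}, which places any two vertices in a common $U_{++}$-orbit after a suitable iterate of $\rho$, this yields constant out-valency on all of $V(\Gamma_{++})$ and constant in-valency on $\{uv_{-i}\mid u\in U_{++},\,i\in\bbN\}$.

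The main obstacle is (iv). For out-valency, (i) reduces the claim to the out-valency of $v_0$, which unwinds to $[U\cap U_{++}:U_{-1}\cap U_{++}]$ and is bounded above by $[U:U_{-1}]$; this is finite because $U_{-1}$ is an open subgroup of the compact group $U$. For in-valency I would invoke the second assertion of Proposition~\ref{prop:highly_arc_trans}: the group $U_{++}\cap U_{--}$ (which is a subgroup, both factors being increasing unions of subgroups) acts transitively on the length-$1$ arcs terminating at $v_{-1}$, hence on the in-neighbourhood of $v_{-1}$. The stabiliser of the reference arc $(v_0,v_{-1})$ in $U_{++}\cap U_{--}$ is $U_{-1}\cap U_{++}\cap U_{--}$, so the in-valency of $v_{-1}$ equals $[U_{++}\cap U_{--}:U_{-1}\cap U_{++}\cap U_{--}]$. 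Passing to the compact closure $H:=\overline{U_{++}\cap U_{--}}$ supplied by Lemma~\ref{lem:compact_closure}, one checks that the natural map of left cosets into $H/(H\cap U_{-1})$ is injective, so this index is bounded by $[H:H\cap U_{-1}]$. Since $U_{-1}$ is open in $G$, the subgroup $H\cap U_{-1}$ is open in the compact group $H$ and therefore of finite index. Together with (ii) and (iii), this establishes local finiteness.
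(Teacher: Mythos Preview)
Your argument tracks the paper's proof closely; parts (i)--(iii) and (v) are handled the same way, and the out-valency bound in (iv) is a minor variant of the paper's (the paper uses Remark~\ref{rem:rel_gamma_++_gamma} and the standing tidy-above hypothesis to get the exact value $[U:U_{-1}]$, while you give an upper bound, which suffices).

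There is one slip in the in-valency computation. The group $U_{++}\cap U_{--}$ does not act on the set of length-$1$ arcs terminating at $v_{-1}$, because a generic element need not fix $v_{-1}$; so orbit--stabiliser does not give your claimed equality $|\mathrm{in}_{\Gamma_{++}}(v_{-1})|=[U_{++}\cap U_{--}:U_{-1}\cap U_{++}\cap U_{--}]$. The paper instead restricts to $U_{++}\cap U_{--}\cap\alpha^{-1}(U)$, which does fix $v_{-1}$, and obtains $|\mathrm{in}_{\Gamma_{++}}(v_{-1})|=[U_{++}\cap U_{--}\cap\alpha^{-1}(U):U_{++}\cap U_{--}\cap\alpha^{-1}(U)\cap U]$. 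Your conclusion is unaffected: your index is the size of the full $(U_{++}\cap U_{--})$-orbit of the arc $(v_0,v_{-1})$, which by Proposition~\ref{prop:highly_arc_trans} contains every in-arc of $v_{-1}$, so it is an upper bound for the in-valency, and your passage to $H=\overline{U_{++}\cap U_{--}}$ then finishes the job. Replacing ``equals'' by ``is at most'' repairs the argument.
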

\begin{proof}
If $u_0,u_1\in V(\Gamma_{++})$, then by Remark \ref{rem:vertex_trans} and swapping $u_0$ with $u_1$ if necessary, there are $g\in U_{++}$ and $n\in\bbN_{0}$ such that $g\rho^{n}(u_0) = u_1$. Proposition \ref{prop:properties_of_induced_map}, shows that \[|\out_{\Gamma_{++}}(u_1)|=|\out_{\Gamma_{++}}(\rho^{n}(u_0))| = |\out_{\Gamma_{++}}(u_0)|,\] hence (i). Similarly, $|\operatorname{in}_{\Gamma_{++}}(u_0)|=|\operatorname{in}_{\Gamma_{++}}(g\rho^{n}(u_0))|$ if neither $u_0$ and $u_1$ are of the form $uv_0$ for some $u\in U_{++}$ and therefore (ii) holds.

The assertion that $|\operatorname{in}_{\Gamma_{++}}(uv_0)| = 0$ follows since for every edge $(u'v_{-i},u'v_{-i-1})$ we have $u'v_{-i-1}\ne uv_0$ by Lemma \ref{lem:equal_vert_in_gamma}.

For local finiteness it now suffices to show that both $\out_{\Gamma_{++}}(v_0)$ and $\operatorname{in}_{\Gamma_{++}}(v_{-1})$ are finite. Note that by Remark \ref{rem:rel_gamma_++_gamma} we have
\[|\out_{\Gamma_{++}}(v_0)|=|Uv_{-1}| = [U:U\cap \alpha^{-1}(U)]\]
which is finite by compactness of $U$ and continuity of $\alpha$. To see that $\operatorname{in}_{\Gamma_{++}}(v_{-1})$ is finite, note that by Proposition \ref{prop:highly_arc_trans} each vertex of $\operatorname{in}_{\Gamma_{++}}(v_{-1})$ can be written as $uv_{0}$ where $u\in U_{++}\cap U_{--}\cap \alpha^{-1}(U)$. Conversely, any such $u$ yields a vertex in $\mathrm{in}_{\Gamma_{++}}(v_{-1})$. Thus
\[|\operatorname{in}_{\Gamma_{++}}(v_{-1})| = [U_{++}\cap U_{--}\cap \alpha^{-1}(U):U_{++}\cap U_{--}\cap \alpha^{-1}(U)\cap U].\]
If $u_{0},u_1\in U_{++}\cap U_{--}\cap \alpha^{-1}(U)$ with $u_0u_1^{-1}\not\in U$, then  $u_{0},u_{1}\!\in\!\overline{U_{++}\cap U_{--}}\cap \alpha^{-1}(U)$ a fortiori and $u_0u_1^{-1}\not\in U$. Thus
\begin{displaymath}
|\operatorname{in}_{\Gamma_{++}}(v_{-1})|\le [\overline{U_{++}\cap U_{--}}\cap \alpha^{-1}(U): \overline{U_{++}\cap U_{--}}\cap \alpha^{-1}(U)\cap U].
\end{displaymath}
Applying Lemma \ref{lem:compact_closure} and noting that $\alpha^{-1}(U)$ is closed, $\overline{U_{++}\cap U_{--}}\cap \alpha^{-1}(U)$ is compact. Furthermore, since $U$ is open, we derive that $\overline{U_{++}\cap U_{--}}\cap \alpha^{-1}(U)\cap U$ is open in $\overline{U_{++}\cap U_{--}}\cap \alpha^{-1}(U)$. Thus $\mathrm{in}_{\Gamma_{++}}(v_{-1})$ is finite.

For part (v), let $(w_{0}, ..., w_{n})$ be an arc from $uv_{−i}$ to $u'v_{−i−k}$ . By definition, we have $w_{1} = u_{n}v_{-i-1}$ for some $u_{1}\in U_{++}$. Hence, by induction, we get $w_{n} = u_{n}v_{-i-n}$ for some $u_{n}\in U_{++}$. But $w_{n}=u'v_{-i-k}$ , so that Lemma 4.2 implies $n = k$.
\end{proof}

\subsection{The quotient $T$}
The tidying procedure relies on identifying a certain quotient $T$ of $\Gamma_{++}$ as a forest of regular rooted trees. To define this quotient, we first introduce a ``depth'' function $\psi:V(\Gamma_{++})\to \bbN$ on $\Gamma_{++}$ as follows: For $v\in V(\Gamma_{++})$, choose an arc $\gamma$ originating from some $uv_0$ ($u\in U_{++}$) and terminating at $v$. Set $\psi(v)$ to be the length of $\gamma$. The following is immediate from Proposition \ref{prop:properties_of_gamma_++}.

\begin{lemma}\label{lem:level_sets}
Retain the above notation. The map $\psi$ is well-defined and we have $\psi(uv_{-i}) = i$ for all $u\in U_{++}$ and $i\in\bbN_{0}$. \qed
\end{lemma}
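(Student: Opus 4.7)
The plan is to separate existence of a qualifying arc from uniqueness of its length, and then to note that the stated formula is unambiguous in the representation $uv_{-i}$ of a vertex. Existence is immediate; I expect the main (but minor) point to be uniqueness, which will be handled by a direct appeal to Proposition \ref{prop:properties_of_gamma_++}(v).

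For existence, given any $v = uv_{-i} \in V(\Gamma_{++})$ with $u \in U_{++}$ and $i \in \bbN_{0}$, I would point to the tuple $(uv_{0}, uv_{-1}, \ldots, uv_{-i})$. Consecutive pairs lie in $E(\Gamma_{++})$ by definition, and the vertices are pairwise distinct by Lemma \ref{lem:equal_vert_in_gamma} since their indices are. This is therefore an arc of length $i$ in $\Gamma_{++}$ originating at a vertex of the required form $u v_{0}$ and terminating at $v$. In particular, at least one arc used in the definition of $\psi$ exists.

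For uniqueness of the length, let $\gamma$ be any arc in $\Gamma_{++}$ originating at some $u'v_{0}$ (with $u' \in U_{++}$) and terminating at $v = uv_{-i}$. Proposition \ref{prop:properties_of_gamma_++}(v), applied with source index $0$ and target index $i$, immediately forces $\gamma$ to have length exactly $i$. Hence every admissible arc for $v$ has the same length $i$, so $\psi$ is well-defined and $\psi(v) = i$.

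Finally, I would confirm that the formula $\psi(uv_{-i}) = i$ itself is unambiguous in the representation: if $uv_{-i} = u'v_{-j}$ as vertices of $\Gamma_{++} \subseteq \Gamma$, then Lemma \ref{lem:equal_vert_in_gamma} gives $i = j$. Thus the index $i$ is determined by the vertex alone, and the lemma follows. No genuine obstacle arises; the entire argument is essentially a packaging of Proposition \ref{prop:properties_of_gamma_++}(v) and Lemma \ref{lem:equal_vert_in_gamma}.
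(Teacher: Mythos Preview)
Your proposal is correct and matches the paper's approach: the paper states the lemma as immediate from Proposition~\ref{prop:properties_of_gamma_++} and gives no further argument, and your proof simply unpacks that claim by exhibiting the obvious arc $(uv_{0},\ldots,uv_{-i})$ and invoking part~(v) together with Lemma~\ref{lem:equal_vert_in_gamma} for well-definedness.
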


By virtue of Lemma \ref{lem:level_sets} we may define the level sets $V_{k}:=\psi^{-1}(k)\subseteq V(\Gamma_{++})$ for $k\ge 0$ and
the edge sets $E_{k}:=\{(w,w')\in E(\Gamma_{++})\mid\psi(w')=k\}$ for $k\ge 1$. It is a consequence of Lemma \ref{lem:level_sets} and Lemma \ref{lem:equal_vert_in_gamma} that  $(w,w')\in E_{k}$ if and only if there is $u\in U_{++}$ such that $(w,w')=(uv_{-k + 1}, uv_{-k})$. On $V_{k}$ $(k\ge 1)$ we introduce an equivalence relation by $w\sim w'$ if $w$ and $w'$ belong to the same connected component of $\Gamma_{++}\backslash E_{k}$. Similarly, for $w,w'\in V_{0}$ we put $w\sim w'$ if they belong to the same connected component of $\Gamma_{++}$. Write $[w]$ for the collection of vertices $w'$ with $w\sim w'$. Note that for every $g\in U_{++}$ and $k\in\bbN_{0}$ we have $gV_{k} = V_{k}$ and, if $k\ge 1$, $gE_{k} = E_{k}$. Since the action of $U_{++}$ on $\Gamma_{++}$ preserves connected components we see that $w\sim w'$ if and only if $gw\sim gw'$. The following Lemma extends this to $\rho$.

\begin{lemma}\label{lem:equiv_classes_preserved_by_action}
Retain the above notation and let $k\in\bbN_{0}$. Then $\rho(V_{k})\!=\! V_{k+1}$ and, given $k\ge 1$, $\rho(E_{k}) = E_{k+1}$. Hence, for $w,w'\!\in\! V(\Gamma_{++})$ we have $w\!\sim\! w'$ if and only if $\rho(w)\!\sim\! \rho(w')$.
\end{lemma}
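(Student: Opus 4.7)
My plan is to prove the level equality $\rho(V_k)=V_{k+1}$ and the edge equality $\rho(E_k)=E_{k+1}$ (for $k\ge 1$) first, then deduce the equivalence statement by transferring paths through the graph isomorphism $\rho\colon\Gamma_{++}\to\rho(\Gamma_{++})$ from Proposition \ref{prop:properties_of_induced_map}.

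For the level equality, Lemma \ref{lem:level_sets} together with the definition of $\rho$ gives $\psi(\rho(uv_{-i}))=i+1$, which shows $\rho(V_k)\subseteq V_{k+1}$. Conversely, given $uv_{-k-1}\in V_{k+1}$, the element $\alpha(u)$ lies in $U_{++}$ and $\rho(\alpha(u)v_{-k})=uv_{-k-1}$ by taking the preimage $u'=u$ in the definition of $\rho$. The edge equality then follows from the description $E_k=\{(uv_{-k+1},uv_{-k}):u\in U_{++}\}$ recorded just before the lemma statement, by an analogous computation.

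The central observation for the equivalence is that removing $E_{k+1}$ from $\Gamma_{++}$ separates the ``upper cone'' $V_{\ge k+1}$ from the remaining vertices: the only edges of $\Gamma_{++}$ joining levels $k$ and $k+1$ belong to $E_{k+1}$. In the forward direction, a path in $\Gamma_{++}\setminus E_k$ (or simply in $\Gamma_{++}$ when $k=0$) from $w$ to $w'$ is mapped by $\rho$ to a path in $\rho(\Gamma_{++})\subseteq\Gamma_{++}$ whose edges avoid $E_{k+1}$, either by $\rho(E_k)=E_{k+1}$ when $k\ge 1$, or because $E_1\cap E(\rho(\Gamma_{++}))=\emptyset$ when $k=0$ (since all vertices of $\rho(\Gamma_{++})$ have level at least $1$). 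In the reverse direction, a path $\gamma$ in $\Gamma_{++}\setminus E_{k+1}$ between $\rho(w),\rho(w')\in V_{k+1}$ cannot cross to $V_{\le k}$, so it stays inside $V_{\ge k+1}\subseteq V_{\ge 1}=V(\rho(\Gamma_{++}))$. Since every edge of $\Gamma_{++}$ with both endpoints at level $\ge 1$ already lies in $\rho(\Gamma_{++})$, the path $\gamma$ belongs to $\rho(\Gamma_{++})$ and can be pulled back via $\rho^{-1}$ to a path in $\Gamma_{++}$ joining $w$ to $w'$; this pullback avoids $E_k$ (vacuously when $k=0$) by $\rho(E_k)=E_{k+1}$.

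The main subtlety I expect is the reverse-direction pullback: one needs to guarantee that the given path, a priori living in the ambient graph $\Gamma_{++}$, actually factors through the image $\rho(\Gamma_{++})$ so that pullback via $\rho^{-1}$ is legal. This is exactly where the disconnecting role of $E_{k+1}$ and the explicit description of $\rho(\Gamma_{++})$ in Proposition \ref{prop:properties_of_induced_map} are essential; once this is in hand, the rest reduces to the graph-isomorphism property of $\rho$.
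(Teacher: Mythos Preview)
Your proposal is correct and follows essentially the same approach as the paper: both use that $\rho$ is a graph isomorphism onto $\rho(\Gamma_{++})$ together with $\rho(E_k)=E_{k+1}$ to transfer connectivity, and both reduce the reverse direction to showing that a path in $\Gamma_{++}\setminus E_{k+1}$ between vertices of $V_{k+1}$ already lies inside $\rho(\Gamma_{++})$. Your write-up is in fact more explicit than the paper's at the key point---the paper simply asserts that the embedding $\rho(\Gamma_{++})\hookrightarrow\Gamma_{++}$ ``maps connected components of $\rho(\Gamma_{++})\setminus E_{k+1}$ to connected components of $\Gamma_{++}\setminus E_{k+1}$,'' whereas you justify this by observing that $E_{k+1}$ separates $V_{\le k}$ from $V_{\ge k+1}$, forcing the path to remain in $V_{\ge 1}=V(\rho(\Gamma_{++}))$.
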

 
\begin{proof}
The assertions $\rho(V_k) = V_{k+1}$ and $\rho(E_k) = E_{k+1}$ are immediate from the definitions. Suppose now that $k\ge 1$ and $w,w'\in V_{k}$ are in the same connected component of $\Gamma_{++}\setminus E_{k}$. By Proposition \ref{prop:properties_of_induced_map}, this can occur if and only if $\rho(w),\rho(w')\in V_{k+1}$ are in the same connected component of $\rho(\Gamma_{++})\setminus E_{k+1}$. The embedding of $\rho(\Gamma_{++})$ into $\Gamma_{++}$ maps connected components of $\rho(\Gamma_{++})\!\setminus\! E_{k+1}$ to connected components of $\Gamma_{++}\!\setminus E_{k+1}$ and is surjective on $V_{k+1}$.

For the case $k = 0$ suppose $w,w'\in V_0$ are in the same connected component of $\Gamma_{++}$. We then have $\rho(w)$ and $\rho(w')$ in the same connected component of $\rho(\Gamma_{++})$. Since $\rho(\Gamma_{++})$ is disjoint from $E_1$ by Proposition \ref{prop:properties_of_induced_map} and Lemma \ref{lem:level_sets}, we in fact conclude that $\rho(w)$ and $\rho(w')$ are in the same connected component of $\rho(\Gamma_{++}) = \rho(\Gamma_{++})\setminus E_1\subset \Gamma_{++}\setminus E_1$.
\end{proof}

\begin{lemma}\label{lem:in_vertices_in_descendants_of_root}
Retain the above notation. There is $N\in\bbN$ such that for every $v\in\desc_{\Gamma_{++}}(v_{0})$ with $\psi(v)\ge N$ we have $\operatorname{in}_{\Gamma_{++}}(v)\subseteq \desc_{\Gamma_{++}}(v_0)$. 
\end{lemma}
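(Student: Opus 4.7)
My plan is to transport the problem down to the finite set $\operatorname{in}_{\Gamma_{++}}(v_{-1})$ via iterates of $\rho$, and then use the $\alpha$-regressive-trajectory description of $U_{++}$ to control each in-neighbour. First, because $U\le U_{++}$ acts on $\Gamma_{++}$ preserving $V(\Gamma_{+})$, and every $v\in V(\Gamma_{+})$ with $\psi(v)=i$ has the form $u_{0}v_{-i}$ for some $u_{0}\in U$, it suffices to prove $\operatorname{in}_{\Gamma_{++}}(v_{-i})\subseteq V(\Gamma_{+})$ for all sufficiently large $i$.

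The key identity I would establish is
\[
\operatorname{in}_{\Gamma_{++}}(v_{-i})=\rho^{i-1}\bigl(\operatorname{in}_{\Gamma_{++}}(v_{-1})\bigr),\qquad i\ge 1,
\]
by induction, via $\rho\bigl(\operatorname{in}_{\Gamma_{++}}(v_{-j})\bigr)=\operatorname{in}_{\Gamma_{++}}(v_{-j-1})$. The forward inclusion is immediate from Proposition~\ref{prop:properties_of_induced_map}: $\rho$ preserves edges and $\rho(v_{-j})=v_{-j-1}$ since $\ker\alpha\subseteq\alpha^{-j-1}(U)$. The reverse inclusion reduces to surjectivity of $\alpha\colon U_{++}\cap\alpha^{-j-1}(U)\to U_{++}\cap\alpha^{-j}(U)$, an immediate consequence of the regressive-trajectory characterization of $U_{++}$.

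With this identity in hand, the heart of the proof is the analysis of $\rho^{i-1}$ on the set $\operatorname{in}_{\Gamma_{++}}(v_{-1})$, which is finite by Proposition~\ref{prop:properties_of_gamma_++}(iv). Writing its elements as $u_{1}v_{0},\dots,u_{d}v_{0}$ with $u_{l}\in U_{++}\cap\alpha^{-1}(U)$, the definition of $U_{++}$ furnishes, for each $l$, an $\alpha$-regressive trajectory $(u_{l}^{(k)})_{k\ge 0}$ of $u_{l}$ that is eventually contained in $U$; let $N_{l}$ satisfy $u_{l}^{(k)}\in U$ for all $k\ge N_{l}$. Since $\alpha^{k}(u_{l}^{(k)})=u_{l}$, the element $u_{l}^{(i-1)}$ is an admissible preimage in the iterated computation of $\rho^{i-1}$, whence $\rho^{i-1}(u_{l}v_{0})=u_{l}^{(i-1)}v_{-i+1}$; for $i-1\ge N_{l}$ this vertex lies in $V(\Gamma_{+})$. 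Setting $N:=1+\max_{l}N_{l}$ completes the argument.

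The point requiring most care, and the main obstacle I anticipate, is the well-definedness of the expression $\rho^{i-1}(u_{l}v_{0})=u_{l}^{(i-1)}v_{-i+1}$ when computed along a single regressive trajectory: two $\alpha^{i-1}$-preimages of $u_{l}$ differ by an element of $\ker\alpha^{i-1}\subseteq\alpha^{-(i-1)}(U)$ and therefore represent the same vertex of $V_{i-1}$. Once this is verified, the rest of the proof collapses to the elementary finite maximum described above.
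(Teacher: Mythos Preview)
Your proposal is correct and follows essentially the same approach as the paper's proof: both reduce to the special vertices $v_{-i}$, establish $\operatorname{in}_{\Gamma_{++}}(v_{-i})=\rho^{i-1}\bigl(\operatorname{in}_{\Gamma_{++}}(v_{-1})\bigr)$, pick $\alpha$-regressive trajectories eventually in $U$ for the finitely many in-neighbours of $v_{-1}$, and take $N$ to be one more than the maximum of the entry times. One slip to fix: the inclusion $U\le U_{++}$ is false in general, so for the reduction step you should instead use that $U$ is tidy above to replace $u_{0}\in U$ by some $u_{+}\in U_{+}\le U\cap U_{++}$ with $u_{+}v_{-i}=u_{0}v_{-i}$ (Theorem~\ref{thm:tidy_above}\ref{item:thm_tidy_above_U_+_action}), which is precisely what the paper does via Proposition~\ref{prop:highly_arc_trans}.
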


\begin{proof}
By Proposition \ref{prop:properties_of_gamma_++}, we can choose $u_0,\ldots, u_k\in U_{++}\cap \alpha^{-1}(U)$ such that $\operatorname{in}_{\Gamma_{++}}(v_{-1}) = \{u_0v_0,\ldots, u_{k}v_0\}$. Since $u_{i}\in U_{++}$ for all $i\in\{0,\ldots, k\}$, we may pick $\alpha$-regressive trajectories $(w_{j}^{i})_{j\in\bbN_{0}}$ and $N_i\in\bbN$ such that $w_{0}^{i} = u_{i}$ and $w^{i}_{n}\in U$ for all $n\ge N_{i}$. Set $N = \max\{N_i\mid i\in\{0,\ldots,k\}\}+1$.

Suppose $n\ge N$. To see that $\operatorname{in}_{\Gamma_{++}}(v_{-n})\subseteq\desc_{\Gamma_{++}}(v_0)$ note that by Proposition \ref{prop:properties_of_gamma_++} we have $\operatorname{in}_{\Gamma_{++}}(v_{-n}) = \rho^{n - 1}(\operatorname{in}(v_{-1})) = \{w_{n-1}^{i}v_{-N +1}\mid  i\in\{0,\ldots,k\}\}$. Since $n - 1\ge N_i$ for all $i\!\in\!\{0,\ldots , k\}$, the path $(w_{n-1}^{i}v_0,\ldots, w_{n-1}^{i}v_{-n+1})$ is contained in $\desc_{\Gamma_{++}}(v_{0})$. This shows $\operatorname{in}_{\Gamma_{++}}(v_{-n})\subseteq \desc_{\Gamma_{++}}(v_0)$. 

In general, let $v\in \desc_{\Gamma_{++}}(v_0)$ with $\psi(v)=n\ge N$. Applying Proposition \ref{prop:highly_arc_trans} to the arc $(v_0,\ldots, v_{-n})$ and any arc connecting $v_0$ to $v$, there is $u\in U\cap U_{++}$ such that $uv_{-n} = v$. Furthermore, $u\desc_{\Gamma_{++}}(v_0) = \desc_{\Gamma_{++}}(v_0)$ as $uv_0 = v_0$ and it follows that $\operatorname{in}_{\Gamma_{++}}(v) = u\operatorname{in}_{\Gamma_{++}}(v_{-n})\subseteq \desc_{\Gamma_{++}}(v_0)$.   
\end{proof}

\begin{lemma}\label{lem:equiv_classes_are_constant}
Retain the above notation. Then the equivalence classes on $\Gamma_{++}$ induced by $\sim$ have finite constant size.
\end{lemma}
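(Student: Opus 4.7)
The plan is to split the argument into two steps: first, show that all equivalence classes have the same cardinality; second, exhibit a single equivalence class that is finite. The essential tools are the $U_{++}$-action on $\Gamma_{++}$ together with the injective graph endomorphism $\rho$ from Proposition~\ref{prop:properties_of_induced_map}, both of which preserve $\sim$ by Lemma~\ref{lem:equiv_classes_preserved_by_action}.

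For the constant-size step, I would first observe that $U_{++}$ acts transitively on each level set $V_{k} = \{uv_{-k} \mid u \in U_{++}\}$; since the action preserves $\sim$, any two equivalence classes inside $V_{k}$ have the same cardinality. To compare different depths, $\rho$ restricts to a bijection $V_{k} \to V_{k+1}$ (injectivity from Proposition~\ref{prop:properties_of_induced_map}; surjectivity since $\rho(u^{*}v_{-k}) = uv_{-k-1}$ whenever $\alpha(u^{*}) = u$), and by Lemma~\ref{lem:equiv_classes_preserved_by_action} this bijection carries $[w]$ onto $[\rho(w)]$. Hence equivalence class sizes agree across levels, giving a single common cardinality.

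For the finiteness step, I would focus on $[v_{-n}]$ for some $n \geq N$, where $N$ is supplied by Lemma~\ref{lem:in_vertices_in_descendants_of_root}. Given $w \sim v_{-n}$ in $V_{n}$, pick a connecting path $P = (w_{0},\ldots,w_{\ell})$ in $\Gamma_{++} \setminus E_{n}$. Two observations will combine to trap $w$ inside a finite set. First, since each edge in $\Gamma_{++}$ has its deeper endpoint at some depth $m$ and lies in $E_{m}$, removing $E_{n}$ forbids every edge whose deeper endpoint is at depth $n$; consequently $P$ stays at depth $\geq n$, and any traversal of an in-edge must occur at a vertex of depth $\geq n+1$. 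Second, by induction along $P$ every $w_{i}$ lies in $\desc_{\Gamma_{++}}(v_{0}) = \Gamma_{+}$: an out-edge transition preserves being a descendant of $v_{0}$ automatically, while an in-edge transition occurs from a vertex of depth $\geq n+1 > N$ already in $\Gamma_{+}$, so Lemma~\ref{lem:in_vertices_in_descendants_of_root} places its in-neighbors in $\Gamma_{+}$. Therefore $w \in \Gamma_{+} \cap V_{n} = Uv_{-n}$, a set of cardinality $[U : U \cap \alpha^{-n}(U)]$, which is finite by compactness of $U$ and openness of $\alpha^{-n}(U)$.

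The main obstacle I expect is the second observation in the finiteness step, namely the interplay between the depth constraint enforced by deleting $E_{n}$ and the descendant constraint supplied by Lemma~\ref{lem:in_vertices_in_descendants_of_root}. Setting the induction up cleanly requires tracking both the direction of each edge traversed and the depth at which it is traversed, and the choice $n \geq N$ is dictated precisely so that every upward step along $P$ meets the hypothesis of Lemma~\ref{lem:in_vertices_in_descendants_of_root}; once that bookkeeping is in place the conclusion is immediate.
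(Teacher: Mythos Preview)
Your proof is correct and follows essentially the same route as the paper: constant size via the $U_{++}$-action on each level together with $\rho$, then finiteness by trapping $[v_{-N}]$ inside $\desc_{\Gamma_{++}}(v_0)=\Gamma_+$ using Lemma~\ref{lem:in_vertices_in_descendants_of_root}. The only difference is organizational: the paper first decomposes the connecting path in $\Gamma_{++}\setminus E_{N}$ into arcs and inducts along the resulting sequence of level-$N$ vertices with overlapping descendant sets, whereas you induct directly along the path itself---a minor and arguably cleaner variation of the same argument.
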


\begin{proof}
By Lemma \ref{lem:equiv_classes_preserved_by_action} and the fact that $U_{++}$ acts transitively on $V_{k}$ for every $k\in\bbN_{0}$, it suffices to show that a single equivalence is finite. Using Lemma \ref{lem:in_vertices_in_descendants_of_root}, choose $N\in\bbN$ such that for every $v\in \desc_{\Gamma_{++}}(v_{0})$ with $\psi(v)\ge N$ we have $\operatorname{in}_{\Gamma_{++}}(v)\subset \desc_{\Gamma_{++}}(v_0)$. We show that $[v_{-N}]\!\subseteq\! \desc_{\Gamma_{++}}(v_0)$. Since $\desc_{\Gamma_{++}}(v_0)\cap V_{k}$ is finite for all $k\in\bbN$ by Proposition \ref{prop:properties_of_gamma_++} and Lemma \ref{lem:equal_vert_in_gamma}, the assertion follows.

Suppose $v\in [v_{-N}]$. Then $v_{-N}$ and $v$ are in the same connected component of $\Gamma_{++}\!\setminus E_{N}$. Hence there is a path from $v_{-N}$ to $v$ contained in $\Gamma_{++}\setminus E_{N}$. Choosing arcs within this path and extending them to $V_{N}$ if necessary, we see that there are vertices $u_0,\ldots, u_{n}\in V_{N}$ with $u_0 = v_{-N}$, $u_{n} = v$ and $\desc_{\Gamma_{++}}(u_{i})\cap \desc_{\Gamma_{++}}(u_{i+1})\neq \emptyset$. We use induction to show that $u_{i}\in \desc_{\Gamma_{++}}(v_0)$. Clearly, $u_0 = v_{-N}\in \desc_{\Gamma_{++}}(v_0)$. Suppose $u_k\in \desc_{\Gamma_{++}}(v_0)$ and let $(w_{0},\ldots,w_{l})$ be an arc such that $w_{0} = u_{k+1}$ and $w_l\!\in\! \desc_{\Gamma_{++}}(u_k)\cap \desc_{\Gamma_{++}}(u_{k+1})$. Then $w_{l}\!\in\! \desc_{\Gamma_{++}}(v_0)$ and $\psi(w_{-l}) = N + l > N$. This implies $w_{l - 1}\in \operatorname{in}_{\Gamma_{++}}(w_l)\subseteq \desc_{\Gamma_{++}}(v_0)$ by the choice of $N$. Repeating this process until $u_{k+1}\!=\! w_{0}\!\in\! \operatorname{in}_{\Gamma_{++}}(w_{1})\!\subseteq\! \desc_{\Gamma_{++}}(v_0)$ completes the induction.
\end{proof}

Now define a directed graph $T$ as the quotient of $\Gamma_{++}$ by the vertex equivalence relation introduced above. In particular, $([w],[w'])$ is an edge in $T$ if and only if there are representatives $w\in[w]$ and $w'\in[w']$ such that $(w,w')$ is an edge in $\Gamma_{++}$. The following result collects properties of $T$. For the statement, we let $d_{+} = |\out_{\Gamma_{++}}(v_0)|$ and $d_{-} = |\operatorname{in}_{\Gamma_{++}}(v_{-1})|$. We let $\varphi:\Gamma_{++}\to T$ denote the quotient map.

\begin{lemma}\label{lem:properties_of_T}
Retain the above notation. The quotient $T$ is a forest of regular rooted trees of degree $d_+/d_-$. The map $\rho$ and the action of $U_{++}$ on $\Gamma_{++}$ descend to $T$. Furthermore, we have the following.
\begin{itemize}
  \item[(i)] The map $\rho$ is a graph morphism from $T$ onto $\rho(T)$ where
  \[V(\rho(T)) = \{[uv_{-i}]\mid u\in U_{++},\ i \in \bbN \}, \text{ and}\]
  \vspace{-0.4cm}  
  \[E(\rho(T)) = \{([uv_{-i}], [uv_{-i - 1}])\mid u\in U_{++},\ i \in \bbN\}.\]
  \item[(ii)] For every $v\in V(T)$, the stabilizer $(U_{++})_v$ acts transitively on $\out_{T}(v)$.
\end{itemize}
\end{lemma}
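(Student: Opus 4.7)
The plan is to address the four claims in order: descent of $\rho$ and the $U_{++}$-action to $T$; the rooted forest structure; part (ii); and finally part (i) together with the computation $d=d_+/d_-$. The $U_{++}$-action on $\Gamma_{++}$ is by graph automorphisms, so it permutes connected components of $\Gamma_{++}\setminus E_k$ for every $k$ and hence respects $\sim$; compatibility of $\rho$ with $\sim$ is precisely Lemma \ref{lem:equiv_classes_preserved_by_action}. Both therefore descend to $T$, with induced map $\bar\rho\colon T\to T$ given by $\bar\rho([w])=[\rho(w)]$.

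For the rooted-forest structure, I would show that every vertex of $T$ at level $k+1\ge 1$ has in-valency exactly $1$. Suppose $[w]$ receives $T$-edges from classes $[v_1]$ and $[v_2]$, realised by edges $(v_i,w_i')\in E(\Gamma_{++})$ with $w_i'\in[w]$. Any path joining $w_1'$ to $w_2'$ in $\Gamma_{++}\setminus E_{k+1}$ must stay at levels $\ge k+1$, since descending to level $k$ would require crossing an edge in $E_{k+1}$. Prepending $(v_1,w_1')$ and appending $(v_2,w_2')$ in reverse yields a walk from $v_1$ to $v_2$ whose edges either lie in $E_{k+1}$ or strictly between levels $\ge k+1$; none of these lie in $E_k$, so $v_1\sim v_2$ and $[v_1]=[v_2]$. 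Since level-zero vertices of $\Gamma_{++}$ have no in-edges by Proposition \ref{prop:properties_of_gamma_++}, each connected component of $T$ is a rooted tree.

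Part (ii) follows from Proposition \ref{prop:highly_arc_trans}: pick representing edges $(v_i,w_i')\in E(\Gamma_{++})$ with $v_i\in[v]$ and $w_i'\in[w_i]$, and apply the proposition to these two arcs of length one. It yields $u\in U_{++}$ and $n\in\bbN_0$ with $u\rho^n(v_1,w_1')=(v_2,w_2')$ (or the symmetric option); a level count forces $n=0$, and since $u$ preserves $\sim$ one obtains $u\in(U_{++})_{[v]}$ with $u[w_1]=[w_2]$. For part (i), the descriptions of $V(\rho(T))$ and $E(\rho(T))$ follow from $\bar\rho([uv_{-i}])=[\rho(uv_{-i})]$ together with Proposition \ref{prop:properties_of_induced_map}. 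Constancy of the out-valency on $T$ combines $U_{++}$-transitivity on each $T_k$ (giving constancy within each level) with the observation that $\bar\rho$ is a graph isomorphism of $T$ onto $\rho(T)$ preserving out-valencies, because $\rho(T)$ contains every out-edge of each of its vertices (such edges live between levels $\ge 1$). To pin down the value $d_+/d_-$, I would first establish $\rho([v])=[\rho(v)]$ using Lemma \ref{lem:equiv_classes_preserved_by_action} and the surjectivity of $\rho\colon V_k\to V_{k+1}$ implicit in Proposition \ref{prop:properties_of_induced_map}, so that all equivalence classes share a common size $s$; then (ii) forces each $[w]\in\out_T([v])$ to receive the same number of edges from $[v]$-representatives in $\Gamma_{++}$, so the $sd_+$ total out-edges split into $d$ equal shares of $sd_+/d$ each, and matching the $sd_-$ in-edges received by each $[w]$ (by in-valency one in $T$) gives $d=d_+/d_-$. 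The main obstacle I anticipate is this counting step: it requires verifying that equivalence-class sizes are genuinely constant across \emph{all} levels of $T$ and that the transitivity in (ii) permutes the edges between $[v]$ and its out-neighbour classes, not merely the classes themselves.
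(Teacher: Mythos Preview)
Your approach is essentially the paper's, and the arguments you give for descent, in-valency $1$, part (ii), and part (i) all match the paper's reasoning closely.

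The only place worth commenting on is the counting step for $d=d_+/d_-$, where you flag an obstacle that the paper sidesteps. First, the constancy (and, crucially, \emph{finiteness}) of equivalence-class sizes is exactly Lemma~\ref{lem:equiv_classes_are_constant}; you should cite it rather than rederive constancy, since your arithmetic $sd_+/d=sd_-$ needs $s<\infty$. Second, you do not actually need part (ii) or any ``equal shares'' argument. Set $A:=\varphi^{-1}([v])$ and $B:=\varphi^{-1}(\out_T([v]))$. Every $\Gamma_{++}$-edge issuing from $A$ lands in $B$ (since all out-edges from level $k$ go to level $k+1$, and the $T$-out-neighbours of $[v]$ are by definition exactly the classes meeting these endpoints), and every $\Gamma_{++}$-edge entering $B$ comes from $A$ (since $T$ has in-valency $1$). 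Counting this common edge set two ways gives $|A|\,d_+=|B|\,d_-$, i.e.\ $s\,d_+=s\,d\,d_-$, so $d=d_+/d_-$. This is the paper's argument, and it makes both of your anticipated difficulties disappear.
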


\begin{proof}
It is clear that if $v\in V(\Gamma_{++})\cap V_{0}$, then $|\operatorname{in}_{T}([v])| = 0$ since $|\operatorname{in}_{\Gamma_{++}}(u)| = 0$ for all $u\in V_{0}$. We now show that if $v\in \Gamma_{++}\!\setminus V_{0}$, then $|\operatorname{in}_{T}([v])| = 1$. Since $|\operatorname{in}_{\Gamma_{++}}(v)|\ge 1$, we have $|\operatorname{in}_{T}([v])|\ge 1$. Suppose now that $([u_0], [v])$ and $([u_1],[v])$ are edges in $T$. Then there are representatives $u_i', w_i'\in V(\Gamma_{++})$ such that $u_i'\in [u_i]$, $w_i\in [v]$ and $(u_i',w_i')\in E(\Gamma_{++})$ for $i\in\{0,1\}$. In particular, $w_0$ is in the same connected component of $\Gamma_{++}\!\setminus E_{\psi(w_0)}$ as $w_1$. Consequently, $u_0'$ is in the same connected component of $E_{\psi(w_0)-1}$ as $u_1'$. As $\psi(u_0') \!=\! \psi(w_0) - 1 \!=\! \psi(w_1)-1 \!=\!\psi(u_1')$, this shows that $[u_0] = [u_0'] = [u_1'] = [u_1]$, and so $([u_0], [v]) = ([u_1], [v])$. We therefore have $|\operatorname{in}_T([v])| = 1$.

The map $\rho$ descends to $T$ by Lemma \ref{lem:equiv_classes_preserved_by_action}, and the action of $U_{++}$ on $\Gamma_{++}$ readily descends to $T$. The assertions concerning $\rho$ and $\rho(T)$ are immediate from Proposition \ref{prop:properties_of_induced_map}. It follows from Proposition \ref{prop:properties_of_induced_map}, that $u\rho^{n}(\operatorname{in}_{T}(v)) = \operatorname{in}_{T}(u\rho^{n}(v))$ for all $v\in V(T)\backslash\varphi(V_{0})$. Since $\rho$ and the action of $U_{++}$ descend to $T$, an analogue of Remark \ref{rem:vertex_trans} also holds for $T$: Indeed, let $[u_{0}],[u_{1}]\in V(T)$. By Remark \ref{rem:vertex_trans}, there is $u\in U_{++}$ and $n\in\bbN_{0}$ such that, without loss of generality, $u\rho^{n}(u_{0})=u_{1}$. Hence $u\rho^{n}([u_{0}])=[u\rho^{n}(u_{0})]=[u_{1}]$. Therefore, $T$ is a forest of regular rooted trees and has constant out-valency.

Let $d$ denote the out-valency of $T$. As in \cite[Lemma 5]{Moe00}, we argue that $d=d^{+}/d^{-}$. By Lemma \ref{lem:equiv_classes_are_constant}, equivalence classes of vertices in $\Gamma_{++}$ have constant finite order $k\in\bbN$. Given $v\in V(T)$, let $A:=\varphi^{-1}(v)$. The $d$ edges issuing from $v$ end in vertices $w_{1},\ldots,w_{d}\in V(T)$. Put $B:=\varphi^{-1}(\{w_{1},\ldots,w_{d}\})$. Then all edges in $\Gamma_{++}$ ending in $B$ originate in $A$ because $T$ has in-valency $1$. The number of edges issuing from $A$, which is $kd^{+}$, and the number of edges terminating in $B$, which is $kdd^{-}$, are thus equal. Hence $d=d^{+}/d^{-}$.

For (ii), let $v\in V(T)$ and $u_0,u_1\in\out_{T}(v)$. Pick representatives $w_0,w_0', w_1,w_1'$ in $V(\Gamma_{++})$ such that $([w_i],[w_i']) = (v,u_i)$ for $i\in\{0,1\}$ and choose $g\in U_{++}$ such that $g(w_0,w_0') = (w_1,w_1')$ by Proposition \ref{prop:highly_arc_trans}. Then $gv = v$ and $gu_0 = u_1$.
\end{proof}

\begin{theorem}\label{thm:tidy_existence}
Let $G$ be a t.d.l.c. group and $\alpha\in\End(G)$. Then there exists a compact open subgroup $V\le G$ which is tidy for $\alpha$.
\end{theorem}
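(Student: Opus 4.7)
My plan has three stages: structural reduction, identification of the tidy subgroup, and verification of tidiness via the graph-theoretic characterization.

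First, I would reduce. If $\{\alpha^{-i}(U)\mid i\in\bbN_{0}\}$ is finite, Lemma~\ref{lem:finite_vertices_case} already furnishes a compact open $V$ tidy for $\alpha$. Otherwise, apply Theorem~\ref{thm:tidy_above_exists} to replace $U$ by $U_{-N}$ for a suitable $N\in\bbN$, so that $U$ is tidy above. This places us in the setting of Section~\ref{sec:tidying_procedure}, so the graphs $\Gamma_{++}$ and $T$ together with all their structural results are available. By Lemma~\ref{lem:properties_of_T}, the connected component $T_{0}\subseteq T$ containing $[v_{0}]$ is a rooted regular tree of out-valency $d := d_{+}/d_{-}$, and both the $U_{++}$-action and the shift $\rho$ restrict to $T_{0}$.

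Next, I would define $V$ as the setwise stabilizer inside $U_{++}$ of the finite equivalence class $[v_{0}]\subseteq V(\Gamma_{++})$. By Lemma~\ref{lem:equiv_classes_are_constant} the class $[v_{0}]$ is finite, so $V$ contains the pointwise stabilizer $(U_{++})_{v_{0}} = U_{++}\cap U$ with finite index. From this I would argue that $V$ is a compact open subgroup of $G$, using compactness of $U$, the relationship between $U_{++}\cap U$ and $U$, and the finite-index bookkeeping against the finite set $[v_{0}]$.

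Finally, to verify tidiness I would use Lemma~\ref{lem:tidy_powers}: it suffices to show $[V:V\cap\alpha^{-n}(V)] = [V:V\cap\alpha^{-1}(V)]^{n}$ for all $n\in\bbN$. The natural way is to identify the directed graph associated to $(G,\alpha,V)$ analogously to $\Gamma_{+}$ in Section~\ref{sec:tidiness} with $T_{0}$. Lemma~\ref{lem:properties_of_T} would then give constant out-valency $d$ on every level, so that $[V:V\cap\alpha^{-n}(V)] = d^{n}$, whence Theorem~\ref{thm:tidy_tree} yields tidiness. Arc-transitivity of $U_{++}$ on $\Gamma_{++}$ (Proposition~\ref{prop:highly_arc_trans}) together with Lemma~\ref{lem:equiv_classes_are_constant} supplies the bijection on level sets and edge-preservation. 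The hard part is precisely this identification: one must match cosets of $V\cap\alpha^{-k}(V)$ in $V$ with equivalence classes of vertices at level $k$ of $\Gamma_{++}$, aligning the $V$-orbit structure with $U_{++}$-orbits modulo $\sim$; carrying this out cleanly while also establishing that $V$ is compact open in $G$ is the main technical hurdle.
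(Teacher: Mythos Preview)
Your reduction and overall strategy match the paper's, but there is a genuine gap in how you define $V$. You take $V$ to be the setwise stabilizer of $[v_{0}]$ \emph{inside $U_{++}$}, so that $V\le U_{++}$ and $V$ contains $(U_{++})_{v_{0}}=U_{++}\cap U$ with finite index. The problem is that $U_{++}\cap U$ is in general neither open nor compact in $G$: we are only assuming $U$ is tidy above, so there is no reason for $U_{++}$ to be closed, nor for $U_{++}\cap U$ to equal $U_{+}$ (that equality requires tidiness below, cf.\ Lemma~\ref{lem:tidy_below}\ref{item:lem:tidy_below:U_+}), and even $U_{+}$ is typically not open. Consequently your $V$, being a finite-index overgroup of $U_{++}\cap U$ inside $U_{++}$, has no reason to be a compact open subgroup of $G$, and the ``finite-index bookkeeping'' you allude to cannot repair this.

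The paper's fix is to take the setwise stabilizer of $X_{0}:=[v_{0}]$ in $G$, viewing $X_{0}$ as a finite subset of $V(\Gamma)$; then the pointwise stabilizer $G_{X_{0}}$ is a finite intersection of conjugates of $U$ (each $w\in X_{0}$ has the form $uv_{0}$ with $G_{w}=uUu^{-1}$), hence compact open, and $V=G_{\{X_{0}\}}$ contains it with finite index. One then checks that this larger $V$ preserves $\desc_{\Gamma_{++}}(X_{0})$ and that its action descends to $T$; the key computation is the identification $V_{[v_{-i}]}=V\cap\alpha^{-i}(V)$, obtained by passing through $\alpha^{i}$ and Lemma~\ref{lem:equiv_classes_preserved_by_action}, after which your intended application of Lemma~\ref{lem:tidy_powers} goes through via the orbit-stabilizer theorem exactly as you outline.
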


\begin{proof}
Recall that throughout the section we consider a compact open subgroup $U\le G$ which, referring to Lemma \ref{lem:finite_vertices_case}, we assume to satisfy that $\{\alpha^{-i}(U)\mid i\in\bbN_{0}\}$ is infinite, and, referring to Theorem \ref{thm:tidy_above_exists}, we assume to be tidy above.

For $i\in\bbN_{0}$, let $v_{-i}':=\varphi(v_{-i})\in V(T)$. In view of the fact that $\Gamma_{++}\subseteq\Gamma$, consider $V:=G_{\{X_{0}\}}$ where $X_{0}:=[v_{0}]\subseteq V(\Gamma_{++})$ is the equivalence class of $v_{0}$ in $\Gamma_{++}$. Since $X_{0}$ is finite by Lemma \ref{lem:equiv_classes_are_constant}, $V$ is open in the permutation topology coming from $\Gamma$ given that $G_{X_{0}}\le V=G_{\{X_{0}\}}=\{g\in G\mid gX_{0}=X_{0}\}$, and hence also open (and closed) in $G$. For the same reason we conclude that $V$ is compact as it contains the compact group $U$ as a finite index subgroup.

Recall that by Remark \ref{rem:rel_gamma_++_gamma}, $\desc_{\Gamma_{++}}(v_0) = \desc_{\Gamma}(v_0)$. Since $U_{++}$ acts by automorphisms on $\Gamma_{++}$ and is transitive on $X_0$, we have $\desc_{\Gamma_{++}}(v) = \desc_{\Gamma}(v)$ for all $v\in X_0$. Taking the union of vertices $v\in X_0$ we have $\desc_{\Gamma_{++}}(X_0) = \desc_{\Gamma}(X_0)$. As $V\! \desc_{\Gamma}(X_0) = \desc_{\Gamma}(X_0)$, the group $V$ acts on $\desc_{\Gamma_{++}}(X_0)$ by automorphisms.

It is clear that $V$ preserves $V_{k}$, $E_{k}$ and connected components. So the action of $V$ descends to $T$ and $V$ stabilizes $v_0'\in V(T)$. Note that $(U_{++})_{v_0'}\le V$ and so iterated application of Lemma \ref{lem:properties_of_T} shows that $V$ acts transitively on vertices of fixed depth in $T$. Also, $V_{v_{-i}'} = V\cap \alpha^{-i}(V)$: Suppose $g\in V$ and $gv_{-i} = uv_{-i}$, where $u\in U_{++}$. Then $g^{-1}u\in \alpha^{-i}(U)$. Thus $\smash{\alpha^{i}(g^{-1}u)\in U}$ and so $\smash{\alpha^{i}(g)v_0 = \alpha^{i}(u)v_0}$. Applying Lemma \ref{lem:equiv_classes_preserved_by_action}, we see that $gv_i\sim v_i$ if and only if $\alpha^{i}(g)v_0\sim v_0$. Finally, applying the orbit-stabilizer theorem and Lemma \ref{lem:properties_of_T} we have
\[[V:V\cap \alpha^{-n}(V)] = |Vv_{-n}'| = (d_{+}/d_{-})^{n} = |Vv_{-1}'|^{n} = [V:V\cap \alpha^{-1}(V)]^{n}.\]
for all $n\in\bbN$. Hence $V$ is tidy for $\alpha$ by Lemma~\ref{lem:tidy_powers}
\end{proof}

\begin{remark}\label{rem:gamma_++_tidy}
Retain the above notation and assume that $U$ is tidy. We argue that in this case $\Gamma_{++}$ and $T$ coincide: It suffices to show that $|\mathrm{in}_{\Gamma_{++}}(v)| = 1$ for some $v = uv_{-i}$ with $i > 0$ as Proposition \ref{prop:properties_of_gamma_++} shows that the relation $\sim$ on $\Gamma_{++}$ is trivial. By Remark \ref{rem:rel_gamma_++_gamma} and Theorem \ref{thm:tidy_tree}, the graph $\desc_{\Gamma_{++}}(v_{0})=\Gamma_{+}$ is already a tree. Lemma \ref{lem:in_vertices_in_descendants_of_root} shows that there exists a vertex $v$ with $\operatorname{in}_{\Gamma_{++}}(v)\subset \Gamma_{+}$. Thus $|\mathrm{in}_{\Gamma_{++}}(v)| = 1$.
\end{remark}

The following lemma will be used in Section \ref{sec:tree_rep_thm}.

\begin{lemma}\label{lem:pseudo_nub}
Suppose $U$ is tidy for $\alpha$. Then $U_{++}\cap U_{--}\le U_+\cap U_-\le U$.
\end{lemma}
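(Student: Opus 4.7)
The inclusion $U_+\cap U_-\le U$ is essentially immediate from the recursive definitions: both $(U_n)_{n\ge 0}$ and $(U_{-n})_{n\ge 0}$ are descending chains starting at $U_0=U$, so $U_+\le U_1\le U$ and $U_-\le U_{-1}\le U$. The substance lies in the inclusion $U_{++}\cap U_{--}\le U_+\cap U_-$. The strategy is to reduce it to the single statement $U_{++}\cap U_{--}\le U$: once this is known, Lemma~\ref{lem:tidy_below}, applied using that $U$ is both tidy above and tidy below, yields $U_{--}\cap U=U_-$ and $U_{++}\cap U=U_+$, so any $x\in U_{++}\cap U_{--}$ automatically lies in $U_-\cap U_+$.

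To prove $U_{++}\cap U_{--}\le U$, let $x\in U_{++}\cap U_{--}$. From $x\in U_{--}=\bigcup_{n}\alpha^{-n}(U_-)$ I would first fix $n\in\bbN_0$ with $\alpha^{n}(x)\in U_-=\bigcap_{k\ge 0}\alpha^{-k}(U)$, which rearranges to $x\in\alpha^{-m}(U)=G_{v_{-m}}$ for every $m\ge n$, i.e.\ $x$ fixes the vertex $v_{-m}$ for all sufficiently large $m$. Meanwhile, $x\in U_{++}$ acts by automorphisms on the graph $\Gamma_{++}$. The key point is then Remark~\ref{rem:gamma_++_tidy}: tidiness of $U$ forces $\Gamma_{++}$ to coincide with its quotient $T$, so that (by Lemma~\ref{lem:properties_of_T}) every non-root vertex of $\Gamma_{++}$ has in-valency exactly one, and the unique predecessor of $v_{-m}$ for $m\ge 1$ is $v_{-m+1}$. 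Because $x$ is a graph automorphism, $x$ sends $\mathrm{in}_{\Gamma_{++}}(v_{-m})=\{v_{-m+1}\}$ to $\mathrm{in}_{\Gamma_{++}}(xv_{-m})=\mathrm{in}_{\Gamma_{++}}(v_{-m})=\{v_{-m+1}\}$, so fixing $v_{-m}$ propagates to fixing $v_{-m+1}$. Iterating from $m=n$ down to $m=1$ gives $xv_0=v_0$, which is exactly $x\in U$.

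The only non-routine ingredient is the structural fact that $\Gamma_{++}$ has in-valency one at every non-root vertex under tidiness, which is precisely Remark~\ref{rem:gamma_++_tidy}; once that is in hand, everything else is a short walk up the tree combined with the two equalities $U_{\pm\pm}\cap U=U_{\pm}$ from Lemma~\ref{lem:tidy_below}.
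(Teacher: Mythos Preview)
Your proof is correct and follows essentially the same line as the paper's: both use Remark~\ref{rem:gamma_++_tidy} to know $\Gamma_{++}$ is a forest of rooted trees, observe that any $x\in U_{++}\cap U_{--}$ fixes some $v_{-n}$, and then use the tree structure to conclude $x$ fixes the root $v_0$, finishing with Lemma~\ref{lem:tidy_below}. The only cosmetic difference is that you spell out the induction up the unique predecessors explicitly, whereas the paper phrases it as ``$x$ preserves $\desc_{\Gamma_{++}}(v_0)$ and hence fixes its root''.
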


\begin{proof}
Since $U$ is tidy for $\alpha$, the graph $\Gamma_{++}$ is a forest of rooted trees by Remark \ref{rem:gamma_++_tidy} . Note that for each $u\in U_{++}\cap U_{--}$, there exists $i\in \bbN_{0}$ such that $uv_{-i} = v_{-i}$. Hence $U_{++}\cap U_{--}$ preserves $\desc_{\Gamma_{++}}(v_0)$, which is a tree with root $v_0$, and is therefore contained in $\stab_{G}(v_{0})=U$. The claim now follows from Lemma~\ref{lem:tidy_below}.
\end{proof}

\section{The Scale Function and Tidy Subgroups}\label{sec:scale_function}

In this section we link the concept of tidy subgroups to the scale function and thereby recover results of \cite{Wil15} in a geometric manner. First, we make a preliminary investigation into the intersection of tidy subgroups. Let $G$ be a t.d.l.c. group, $\alpha\in\End(G)$ and $U^{(1)},U^{(2)}\le G$ compact open as well as tidy for $\alpha$.

\begin{proposition}\label{prop:tidy_subgroups_index_equal}
Retain the above notation. Then
\begin{displaymath}
 [U^{(1)}:U^{(1)}\cap \alpha^{-1}(U^{(1)})] = [U^{(2)}:U^{(2)}\cap \alpha^{-1}(U^{(2)})]
\end{displaymath}
\end{proposition}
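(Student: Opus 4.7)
The plan is to translate the statement about the single-step indices into an asymptotic one by invoking Lemma~\ref{lem:tidy_powers}: writing $a := [U^{(1)}:U^{(1)}\cap\alpha^{-1}(U^{(1)})]$ and $b := [U^{(2)}:U^{(2)}\cap\alpha^{-1}(U^{(2)})]$, tidiness gives $[U^{(i)}:U^{(i)}\cap\alpha^{-n}(U^{(i)})]=a^{n}$ or $b^{n}$ for all $n\in\bbN$. Hence $a=b$ will follow if one can show $a^{n}\le Cb^{n}$ and $b^{n}\le Ca^{n}$ for a constant $C$ independent of $n$, by taking $n$-th roots and letting $n\to\infty$.

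To produce such bounds I would compare both $U^{(1)}$ and $U^{(2)}$ to the intersection $V := U^{(1)}\cap U^{(2)}$. Since $U^{(1)}$ and $U^{(2)}$ are commensurable compact open subgroups, the indices $n_{i}:=[U^{(i)}:V]$ are finite. The key step is to establish the following two elementary index estimates for any compact open $V\le U$ with $[U:V]=n$ and any $k\in\bbN$:
\begin{displaymath}
[U:U\cap\alpha^{-k}(U)]\;\le\; n\cdot[V:V\cap\alpha^{-k}(V)]\quad\text{and}\quad [V:V\cap\alpha^{-k}(V)]\;\le\; n\cdot[U:U\cap\alpha^{-k}(U)].
\end{displaymath}
Both are proven by computing the index $[U:V\cap\alpha^{-k}(V)]$ in two ways, first via $V$, then via $U\cap\alpha^{-k}(U)$, and equating. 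The correction factor $[U\cap\alpha^{-k}(U):V\cap\alpha^{-k}(V)]$ one picks up is trivially $\ge 1$ for the first estimate. For the second estimate this correction factor must be bounded above by $n^{2}$: split it into $[U\cap\alpha^{-k}(U):V\cap\alpha^{-k}(U)]\cdot[V\cap\alpha^{-k}(U):V\cap\alpha^{-k}(V)]$, noting that the first factor is bounded by $[U:V]=n$, while for the second factor one uses that $\alpha^{k}$ induces an injection $(V\cap\alpha^{-k}(U))/(V\cap\alpha^{-k}(V))\hookrightarrow U/V$, again giving $\le n$.

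Putting the two estimates together with $U=U^{(i)}$ and then $U=U^{(j)}$ (for $\{i,j\}=\{1,2\}$) gives
\begin{displaymath}
a^{n}=[U^{(1)}\!:\!U^{(1)}\!\cap\!\alpha^{-n}(U^{(1)})]\le n_{1}[V\!:\!V\!\cap\!\alpha^{-n}(V)]\le n_{1}n_{2}\,[U^{(2)}\!:\!U^{(2)}\!\cap\!\alpha^{-n}(U^{(2)})]=n_{1}n_{2}b^{n},
\end{displaymath}
so $(a/b)^{n}\le n_{1}n_{2}$ for all $n\in\bbN$, forcing $a\le b$. Reversing the roles of $U^{(1)}$ and $U^{(2)}$ yields the reverse inequality and hence $a=b$.

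The main obstacle is the second of the two index estimates: one must control the ``error factor'' $[U\cap\alpha^{-k}(U):V\cap\alpha^{-k}(V)]$ uniformly in $k$, which is precisely where the set-theoretic preimage $\alpha^{-k}$ obstructs a naive argument. The observation that $\alpha^{k}$ induces an injection into $U/V$ is exactly what allows this factor to be bounded by $n^{2}$ instead of growing with $k$; once this bound is in hand, the rest is a short asymptotic argument using Lemma~\ref{lem:tidy_powers}.
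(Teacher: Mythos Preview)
Your argument is correct and arrives at the same sandwich inequalities the paper uses, namely
\begin{displaymath}
\frac{d_i^{\,n}}{k_i}\le [V:V\cap\alpha^{-n}(V)]\le k_i\, d_i^{\,n}
\end{displaymath}
with $V=U^{(1)}\cap U^{(2)}$, $k_i=[U^{(i)}:V]$ and $d_i=[U^{(i)}:U^{(i)}\cap\alpha^{-1}(U^{(i)})]$ (this is the paper's Lemma~\ref{lem:orbit_bounds}). The difference lies in how the bounds are obtained and how the conclusion is drawn. The paper derives these bounds graph-theoretically via the trees $\Gamma_+^{(i)}$ and orbit counts, and then invests further work (Lemmas~\ref{lem:finite_tidy_stabilize}--\ref{lem:tree_or_point}) to show that $[V:V\cap\alpha^{-n}(V)]$ itself has exact geometric growth $lt^{n}$ for large $n$, concluding $d_1=t=d_2$. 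You instead prove the bounds by pure index bookkeeping---the key observation being the injection $(V\cap\alpha^{-k}(U))/(V\cap\alpha^{-k}(V))\hookrightarrow U/V$ induced by $\alpha^{k}$---and then simply chain them, $a^{n}\le n_1[V:V\cap\alpha^{-n}(V)]\le n_1 n_2\, b^{n}$, so that no information about the growth of the middle term is needed. This bypasses the case analysis and the tree-structure lemma for $\Gamma_+$ associated to $V$ entirely. Your route is shorter and more elementary; the paper's route has the virtue of staying within its graph-theoretic framework and yielding the auxiliary structural information about $\Gamma_+$ along the way.
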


To prove Proposition \ref{prop:tidy_subgroups_index_equal}, we need some preparatory lemmas concerning inverse images of $U^{(1)}$ and $U^{(2)}$. The first one complements Lemma \ref{lem:finite_vertices_case}.

\begin{lemma}\label{lem:finite_tidy_stabilize}
Let $G$ be a t.d.l.c. group, $\alpha\in\End(G)$ and $U\le G$ compact open and tidy above for $\alpha$. If $\{\alpha^{-n}(U)\mid n\in\bbN_{0}\}$ is finite, then $\alpha(U)\le U\le\alpha^{-1}(U)$. \end{lemma}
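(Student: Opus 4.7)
The plan is to reduce to a short index computation via Proposition \ref{prop:out_index}. Observe first that the two stated inclusions $\alpha(U)\le U$ and $U\le\alpha^{-1}(U)$ are equivalent: applying $\alpha^{-1}$ to $\alpha(U)\le U$ gives $U\le\alpha^{-1}(\alpha(U))\le\alpha^{-1}(U)$, and applying $\alpha$ to $U\le\alpha^{-1}(U)$ gives $\alpha(U)\le\alpha(\alpha^{-1}(U))\le U$. So it suffices to show $U=U_{-1}:=U\cap\alpha^{-1}(U)$.

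Next, I would exploit the finiteness assumption. Since the descending sequence $(U_{-n})_{n\in\bbN_{0}}$ is defined as $U_{-n}=\bigcap_{k=0}^{n}\alpha^{-k}(U)$ and the collection $\{\alpha^{-k}(U)\mid k\in\bbN_{0}\}$ is finite, the intersections $U_{-n}$ eventually stabilize: there exists $N\in\bbN_{0}$ such that $U_{-N}=U_{-N-1}$. In particular, $[U_{-N}:U_{-N-1}]=1$.

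The hypothesis that $U$ is tidy above for $\alpha$ now lets me invoke Proposition \ref{prop:out_index}, which yields the equality $[U_{-n}:U_{-n-1}]=[U:U_{-1}]$ for every $n\in\bbN$. Setting $n=N$ gives $[U:U_{-1}]=1$, so $U=U_{-1}=U\cap\alpha^{-1}(U)$, i.e.\ $U\le\alpha^{-1}(U)$. Combined with the equivalence noted in the first paragraph, we also obtain $\alpha(U)\le U$, which is the desired chain $\alpha(U)\le U\le\alpha^{-1}(U)$.

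There is no substantive obstacle here beyond invoking the correct earlier results. The only point one must be careful about is that Proposition \ref{prop:out_index} requires tidiness above for $\alpha$, which is part of the hypothesis, and that the finiteness assumption forces the descending chain $(U_{-n})_n$ to terminate rather than merely being eventually periodic\,---\,both are immediate.
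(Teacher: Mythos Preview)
Your proof is correct and follows essentially the same approach as the paper: use finiteness of $\{\alpha^{-n}(U)\}$ to see that the descending chain $(U_{-n})_n$ stabilizes, then apply Proposition~\ref{prop:out_index} to obtain $[U:U_{-1}]=1$, and conclude $U\le\alpha^{-1}(U)$ (whence $\alpha(U)\le U$). The only differences are cosmetic---you spell out the equivalence of the two inclusions at the outset, whereas the paper simply applies $\alpha$ at the end.
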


\begin{proof}
By assumption, the intersection $\bigcap_{k=0}^{\infty}\alpha^{-k}(U)$ has only finitely many terms and hence stabilizes eventually. For sufficiently large $n\in\bbN_0$ we therefore have $[U_{-n}: U_{-n-1}] = 1$. By Proposition \ref{prop:out_index}, we get
\begin{displaymath}
  1\!=\! [U_{-n}:U_{-n-1}]\!=\! [U:U_{-1}]=[U:U\cap\alpha^{-1}(U)].
\end{displaymath}
Hence $U\le\alpha^{-1}(U)$. Applying $\alpha$ yields $\alpha(U)\le U$.
\end{proof}

The next lemma settles Proposition \ref{prop:tidy_subgroups_index_equal} when both $\{\alpha^{-n}(U^{(1)})\mid n\in\bbN_{0}\}$ and $\{\alpha^{-n}(U^{(2)})\mid n\in\bbN_{0}\}$ are finite.

\begin{lemma}\label{lem:both_finite_case}
Retain the above notation. If $\{\alpha^{-n}(U^{(i)})\mid n\in\bbN_{0}\}$ is finite for both $i\in\{1,2\}$, then $[U^{(1)}: U^{(1)}\cap\alpha^{-1}(U^{(1)})]\!=1=\![U^{(2)}:U^{(2)}\cap \alpha^{-1}(U^{(2)})]$ and the group $U^{(1)}\cap U^{(2)}$ is tidy for $\alpha$. 
\end{lemma}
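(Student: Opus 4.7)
The plan is to reduce everything to the case where $\alpha$ contracts each $U^{(i)}$ into itself, which happens automatically under the finiteness hypothesis, and then exploit the algebraic tidiness criterion of Lemma~\ref{lem:tidy_powers}.

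First I would apply Lemma~\ref{lem:finite_tidy_stabilize} to each $U^{(i)}$: since $U^{(i)}$ is tidy (hence tidy above) for $\alpha$ and $\{\alpha^{-n}(U^{(i)})\mid n\in\bbN_{0}\}$ is finite, we conclude $\alpha(U^{(i)})\le U^{(i)}\le \alpha^{-1}(U^{(i)})$. The right-hand inclusion immediately gives $U^{(i)}\cap\alpha^{-1}(U^{(i)})=U^{(i)}$, so $[U^{(i)}:U^{(i)}\cap\alpha^{-1}(U^{(i)})]=1$ for each $i\in\{1,2\}$, settling the first assertion of the lemma.

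Next, set $V:=U^{(1)}\cap U^{(2)}$, which is a compact open subgroup of $G$ as the intersection of two such. Using $\alpha(U^{(i)})\le U^{(i)}$ for $i\in\{1,2\}$, I would observe
\begin{displaymath}
\alpha(V)\le \alpha(U^{(1)})\cap\alpha(U^{(2)})\le U^{(1)}\cap U^{(2)}=V,
\end{displaymath}
so $V\le\alpha^{-1}(V)$. Iterating the endomorphism $\alpha$ then yields $V\le\alpha^{-n}(V)$ for every $n\in\bbN$, hence $V\cap\alpha^{-n}(V)=V$ for all such $n$.

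To conclude, I would invoke Lemma~\ref{lem:tidy_powers}: since
\begin{displaymath}
[V:V\cap\alpha^{-n}(V)]=1=1^{n}=[V:V\cap\alpha^{-1}(V)]^{n}
\end{displaymath}
for all $n\in\bbN$, the subgroup $V$ is tidy for $\alpha$. There is no genuine obstacle here; the finiteness assumption plus tidiness above have already done the heavy lifting via Lemma~\ref{lem:finite_tidy_stabilize}, reducing the statement to the trivial observation that $\alpha$-invariance of $V$ is preserved under intersections and that $\alpha$-invariance of a compact open subgroup automatically entails tidiness through Lemma~\ref{lem:tidy_powers}.
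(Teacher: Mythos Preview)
Your proof is correct and follows essentially the same path as the paper's: both invoke Lemma~\ref{lem:finite_tidy_stabilize} to get $U^{(i)}\le\alpha^{-1}(U^{(i)})$, deduce that the intersection $V=U^{(1)}\cap U^{(2)}$ satisfies $V\le\alpha^{-1}(V)$, and conclude tidiness of $V$. The only cosmetic difference is the final citation: the paper appeals to Lemma~\ref{lem:finite_vertices_case} (observing $V_{-}=V$), whereas you appeal to Lemma~\ref{lem:tidy_powers} via the trivial index computation; both are immediate from $\alpha(V)\le V$.
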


\begin{proof}
The first assertion follows from Lemma \ref{lem:finite_tidy_stabilize} as both indices are equal to $1$. By the same Lemma we have $\smash{\alpha^{-1}(U^{(1)}\cap U^{(2)})=\alpha^{-1}(U^{(1)})\!\cap\!\alpha^{-1}(U^{(2)})\ge U^{(1)}\cap U^{(2)}}$. Lemma \ref{lem:finite_vertices_case} now entails that $\smash{(U^{(1)}\cap U^{(2)})_{-} = U^{(1)}\cap U^{(2)}}$ is tidy for $\alpha$.
\end{proof}

Retain the above notation and set $V:=U^{(1)}\cap U^{(2)}$. Consider the graph $\Gamma_{+}$ associated to $V$.

\begin{lemma}\label{lem:tree_or_point}
Retain the above notation. Then either $\Gamma_{+}$ is a directed infinite tree, rooted at $v_{0}$, with constant in-valency $1$ excluding the root, or there exists $n\in\bbN_{0}$ such that $\alpha^{-n}(V)\!=\!\alpha^{-n - k}(V)$ for all $k\in\bbN_{0}$.
\end{lemma}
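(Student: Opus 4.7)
The plan is to first establish the key identity $V \cap \alpha^{-i}(V) = V_{-i}$ for all $i \in \bbN_0$, and then to treat the infinite and finite cases separately. For the identity, Lemma~\ref{lem:tidy_powers} applied to each tidy subgroup $U^{(j)}$ gives $[U^{(j)} : U^{(j)} \cap \alpha^{-i}(U^{(j)})] = d_j^{\,i}$, where $d_j := [U^{(j)}:U^{(j)}_{-1}]$, while iterating Proposition~\ref{prop:out_index} yields $[U^{(j)}:U^{(j)}_{-i}] = d_j^{\,i}$ as well. Since $U^{(j)}_{-i} \subseteq U^{(j)} \cap \alpha^{-i}(U^{(j)})$ trivially, equality of these finite indices forces equality of the two subgroups. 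Intersecting over $j \in \{1,2\}$ and exchanging the order of intersections then gives $V \cap \alpha^{-i}(V) = U^{(1)}_{-i} \cap U^{(2)}_{-i} = \bigcap_{k=0}^{i} \alpha^{-k}(V) = V_{-i}$.

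Suppose first that $\{\alpha^{-i}(V) \mid i \in \bbN_0\}$ is infinite. Then Lemma~\ref{lem:equal_vert_in_gamma} applies, so the depth function is well defined on $V(\Gamma_{+})$ and, in particular, $|\operatorname{in}_{\Gamma_{+}}(v_{0})| = 0$. For a vertex $uv_{-i}$ at depth $i \ge 1$, its in-neighbors are the distinct vertices of the form $u'v_{-i+1}$ with $u' \in V$ and $u'v_{-i} = uv_{-i}$; the latter condition forces $u' \in u(V \cap \alpha^{-i}(V)) = uV_{-i}$ by the identity. Since $V_{-i} \subseteq \alpha^{-i+1}(V)$, all such representatives lie in a single $\alpha^{-i+1}(V)$-coset, and hence yield only one in-neighbor. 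Combined with $\Gamma_{+} = \desc_{\Gamma_{+}}(v_{0})$, the constant in-valency one then makes $\Gamma_{+}$ a directed tree rooted at $v_{0}$.

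Now suppose $\{\alpha^{-i}(V) \mid i \in \bbN_0\}$ is finite. By the key identity, the descending chain $V_{-i}$ takes only finitely many values, so it stabilizes; consequently $[V:V_{-i}]$ is uniformly bounded in $i$. On the other hand, $V_{-i} \subseteq V \cap U^{(j)}_{-i}$ and multiplicativity of indices yield
\[
[V : V_{-i}] \;\ge\; [V : V \cap U^{(j)}_{-i}] \;\ge\; \frac{[U^{(j)} : U^{(j)}_{-i}]}{[U^{(j)} : V]} \;=\; \frac{d_j^{\,i}}{[U^{(j)}:V]}.
\]
Boundedness forces $d_j = 1$, equivalently $U^{(j)} \le \alpha^{-1}(U^{(j)})$, for both $j \in \{1,2\}$. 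Applying $\alpha^{-i}$ shows that each sequence $(\alpha^{-i}(U^{(j)}))_{i \in \bbN_0}$ is ascending, and hence so is the intersection sequence $(\alpha^{-i}(V))_{i \in \bbN_0}$. An ascending sequence taking finitely many values must stabilize, supplying the required $n$.

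The main obstacle is the finite case. Stabilization of $V \cap \alpha^{-i}(V)$ alone delivers only eventual periodicity of $\alpha^{-i}(V)$ via pigeonhole, not stabilization; distinct subgroups $\alpha^{-i}(V)$ can share the same intersection with $V$. The crucial step is the index inequality that forces $d_j = 1$, which in turn promotes each ambient sequence $\alpha^{-i}(U^{(j)})$ to be monotone and then transfers this monotonicity to the intersection, upgrading periodicity to genuine stabilization.
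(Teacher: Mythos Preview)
Your proof is correct and takes a genuinely different route from the paper's. The key move is the identity $V\cap\alpha^{-i}(V)=V_{-i}$, obtained by comparing the indices $[U^{(j)}:U^{(j)}\cap\alpha^{-i}(U^{(j)})]=d_j^{\,i}$ from Lemma~\ref{lem:tidy_powers} with $[U^{(j)}:U^{(j)}_{-i}]=d_j^{\,i}$ from Proposition~\ref{prop:out_index}. With this identity in hand, the in-valency computation in the infinite case becomes a one-line coset calculation: any in-neighbour of $uv_{-i}$ is represented by some $u'\in uV_{-i}$, and since $V_{-i}\subseteq\alpha^{-i+1}(V)$ all such $u'$ determine the same vertex $u'v_{-i+1}$. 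Your treatment of the finite case is also different: you bound $[V:V_{-i}]\ge d_j^{\,i}/[U^{(j)}:V]$, conclude $d_j=1$, and then use the resulting monotonicity of each sequence $(\alpha^{-i}(U^{(j)}))_i$ to force stabilisation of their intersection.

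The paper's argument is more graph-theoretic and case-based. It never proves your identity. Instead, it splits along the dichotomy ``$\alpha^{-n}(V)=\alpha^{-n-1}(V)$ for some $n$'' versus ``not'', and in the latter case argues that at least one of $\{\alpha^{-n}(U^{(j)})\}$ is infinite. It then shows that any cycle in $\Gamma_{+}$ forces a vertex of in-valency at least~$2$, and rules this out via the graph morphisms $\varphi_j:\Gamma_{+}\to\Gamma_{+}^{(j)}$ to the trees associated with $U^{(j)}$ (when both ambient sets are infinite), or via Lemma~\ref{lem:finite_tidy_stabilize} (when one is finite). Your approach handles both sub-cases uniformly and is shorter; the paper's approach avoids the index-comparison step and stays closer to the geometric picture. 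One minor point: your claim that in-valency one together with $\Gamma_{+}=\desc_{\Gamma_{+}}(v_0)$ yields a tree is correct, but it is worth noting explicitly that the well-defined depth function (Lemma~\ref{lem:gamma_+_depth}) excludes cycles, since a vertex of maximal depth in a putative cycle would acquire two distinct in-neighbours.
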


\begin{proof}
Note that if $\alpha^{-n}(V) = \alpha^{-n - 1}(V)$, then $\alpha^{-n}(V) = \alpha^{- n - k}(V)$ for all $k\in\bbN_{0}$. Suppose instead that $\alpha^{-n}(V)\neq\alpha^{-n-1}(V)$ for all $n\in\bbN_{0}$.Then we may assume, without loss of generality, that $\{\alpha^{-n}(U^{(1)})\mid n\in\bbN_{0}\}$ is infinite. In particular, we may consider the graph $\smash{\Gamma_{+}^{(1)}}$ associated to $\smash{U^{(1)}}$ which is an infinite rooted tree by Theorem \ref{thm:tidy_tree}.

We have to show that $\Gamma_{+}$ does not contain a cycle, the in-valency of $v_0\in V(\Gamma_{+})$ is $0$ and the in-valency of every other vertex in $\Gamma_{+}$ is precisely $1$. Note that every vertex excluding $v_0$ has in-valency at least $1$: By assumption, $v_{-i}\neq v_{-i-1}$ for all $i\in\bbN$. In particular $v_{-i}\in\operatorname{in}_{\Gamma_{+}}(v_{-i-1})$ for all $i\in\bbN$. 

Now, suppose that $c:=(w_{0},\ldots,w_{n}=w_{0})$ $(n\ge 2)$ is a cycle in $\Gamma_{+}$. In particular, the vertices $w_{0},\ldots,w_{n}\in V(\Gamma_{+})$ are pairwise distinct. We show that $c$ forces some vertex of $\Gamma_{+}$ to have in-valency at least $2$ and then argue that such vertices do not exist. If $c$ is not directed, then there is $i\in\{0,\ldots,n\}$ such that the in-valency of $w_{i}$ is at least $2$. Otherwise, $c$ is directed of the form $(u_0v_{-i},\ldots, u_{n}v_{-i-n} = u_0v_{-i})$, where $u_{j}\in V$ for all $j\in\{0,\ldots,n\}$. Then $\alpha^{-i}(V) = \alpha^{-i-n}(V)$ and therefore $(v_{-i},\ldots, v_{-i - n})$ is a non-trivial cycle. We aim to show that $v_{-i}$ has in-valency at least 2 in this case. We can choose $u\!\in\!\alpha^{-i-1}(V)\!\setminus\!\alpha^{-i}(V)$: If $\alpha^{-i-1}(V)\!\subseteq\!\alpha^{-i}(V)$, then iterated applications of $\alpha^{-1}$ show $\alpha^{-i}(V)\!\supseteq\! \alpha^{-i-1}(V)\!\supseteq\!\alpha^{-i-n}(V)\!=\!\alpha^{-i}(V)$, in contradiction to the assumption. Since $\alpha^{-i - 1}(V)\!=\!\alpha^{-1}\alpha^{-i}(V)\!=\!\alpha^{-1}\alpha^{-i - n}(V)$, also $u\in\alpha^{-i-n - 1}(V)\!\setminus\!\alpha^{-n-i}(V)$. This implies that $(uv_{-i-n},v_{-i -n -1})$ is an edge in $\Gamma_{+}$ which is distinct from $(v_{-n-i},v_{-n - i - 1})$.

Noting that if $v_0$ has non-zero in-valency, then we have a cycle, it remains to show that no vertex has in-valency at least $2$. We split into two cases: First, consider the case where $\smash{\{\alpha^{-n}(U^{(2)})\!\mid\! n\in\bbN_{0}\}}$ is finite. Then $\smash{\alpha^{-n}(U^{(2)})\!\ge\!\alpha^{-n+1}(U^{(2)})\!\ge\! U^{(2)}}$ for all $n\in\bbN$ by Lemma \ref{lem:finite_tidy_stabilize} and
\begin{align*}
|\operatorname{in}&_{\Gamma_{+}}(v_i)| = [\alpha^{-i}(V)\cap V: \alpha^{-i}(V)\cap V\cap\alpha^{-i+1}(V)] \\
&=[\alpha^{-i}(U^{(1)})\cap U_{1}\cap U_{2}:\alpha^{-i}(U^{(1)})\cap U_{1}\cap\alpha^{-i + 1}(U^{(1)})\cap U_{2}] \\
&\le [\alpha^{-i}(U^{(1)})\cap U_{1}:\alpha^{-i}(U^{(1)})\cap U_{1}\cap\alpha^{-i + 1}(U^{(1)})] = |\operatorname{in}_{\Gamma_{+}^{(1)}}(v_{-i}^{(1)})| = 1
\end{align*}
for all $i\in\bbN$ which suffices.

In the case where $\{\alpha^{-n}(U^{(2)})\mid n\in\bbN_{0}\}$ is infinite, suppose for the sake of a contradiction that $uv_{-n}\in V(\Gamma_{+})$ $(n\in\bbN)$ has in-valency at least $2$. Choose vertices $wv_{-n+1},zv_{-n+1}\in V(\Gamma_{+})$ such that $(wv_{-n+1},uv_{-n})$ and $(zv_{-n+1},vv_{-n})$ are distinct edges in $\Gamma_{+}$. Let $\smash{\varphi_{i}:\Gamma_{+}\to \Gamma_{+}^{(i)}}$ ($i\in\{1,2\}$) be the graph morphism given by $\smash{\varphi_{i}(uv_{-j}) = uv_{-j}^{(i)}}$ for all $j\in\bbN_{0}$ and $\smash{u\in V\subseteq U^{(i)}}$. Since each vertex excluding the root in $\smash{\Gamma_{+}^{(i)}}$ has in-valency $1$, we have $\varphi_{i}(wv_{-n+1}) = \varphi_{i}(zv_{-n+1})$. This implies $w^{-1}z\in \alpha^{-n+1}(U^{(1)})\cap \alpha^{-n+1}(U^{(2)}) = \alpha^{-n+1}(V)$. We therefore have $wv_{-n+1} = zv_{-n+1}$, in contradiction to the assumption.
\end{proof}

Set $k_i = [U^{(i)}:V]$ and $d_i = [U^{(i)}:U^{(i)}\cap \alpha^{-1}(U^{(i)})]$.
\begin{lemma}\label{lem:orbit_bounds}
Retain the above notation. We have $k_id_i^{n}\ge |Vv_{-n}|\ge d_i^n/k_i$. Also, if $\{\alpha^{-i}(V)\mid i\in\bbN_{0}\}$ is finite, then $d_1=1=d_{2}$.
\end{lemma}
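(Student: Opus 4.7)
The plan is to compute $|Vv_{-n}|$ via orbit--stabilizer and then sandwich it between the indices $[U^{(i)}:U^{(i)}\cap\alpha^{-n}(U^{(i)})]$, which equal $d_i^n$ by tidiness of $U^{(i)}$ and Lemma \ref{lem:tidy_powers}. Since $G$ acts on $V(\Gamma)$ by left multiplication and the stabilizer of $v_{-n}=\alpha^{-n}(V)$ is $\alpha^{-n}(V)$ itself, orbit--stabilizer yields $|Vv_{-n}|=[V:V\cap\alpha^{-n}(V)]$. Both inequalities then reduce to bookkeeping along the subgroup chain $V\cap\alpha^{-n}(V)\le V\cap\alpha^{-n}(U^{(i)})\le V\le U^{(i)}$.

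For the upper bound, I would factor
\begin{displaymath}
[V:V\cap\alpha^{-n}(V)]=[V:V\cap\alpha^{-n}(U^{(i)})]\cdot[V\cap\alpha^{-n}(U^{(i)}):V\cap\alpha^{-n}(V)].
\end{displaymath}
The first factor is at most $d_i^n$ via the standard coset embedding $V/(V\cap K)\hookrightarrow U^{(i)}/K$ applied to $K=U^{(i)}\cap\alpha^{-n}(U^{(i)})$. For the second factor, decompose $U^{(i)}$ as a disjoint union of $k_i$ cosets of $V$ and take preimages under $\alpha^n$: each non-empty preimage is a single coset of $\alpha^{-n}(V)$ inside $\alpha^{-n}(U^{(i)})$, so $[\alpha^{-n}(U^{(i)}):\alpha^{-n}(V)]\le k_i$, and a second application of the same embedding trick bounds the second factor by this index.

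The lower bound is immediate from the inclusion $V\cap\alpha^{-n}(V)\le U^{(i)}\cap\alpha^{-n}(U^{(i)})$ and multiplicativity of indices:
\begin{displaymath}
d_i^n=[U^{(i)}:U^{(i)}\cap\alpha^{-n}(U^{(i)})]\le[U^{(i)}:V\cap\alpha^{-n}(V)]=k_i|Vv_{-n}|.
\end{displaymath}

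For the final claim, suppose $\{\alpha^{-i}(V)\mid i\in\bbN_0\}$ is finite. Then $\Gamma_+$ cannot be an infinite directed tree with constant in-valency $1$: in such a tree the vertices $v_0,v_{-1},v_{-2},\ldots$ sit at pairwise distinct depths, hence are pairwise distinct, forcing infinitely many distinct subgroups $\alpha^{-i}(V)$. Lemma \ref{lem:tree_or_point} therefore supplies $N\in\bbN_0$ with $\alpha^{-n}(V)=\alpha^{-N}(V)$ for all $n\ge N$, whence $|Vv_{-n}|=[V:V\cap\alpha^{-N}(V)]$ is bounded independently of $n$. The lower bound $|Vv_{-n}|\ge d_i^n/k_i$ then forces $d_i=1$ for $i\in\{1,2\}$. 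The only mildly subtle point in the whole argument is the coset decomposition establishing $[\alpha^{-n}(U^{(i)}):\alpha^{-n}(V)]\le k_i$; everything else is routine index accounting.
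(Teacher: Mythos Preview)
Your argument is correct and follows essentially the same index-bookkeeping approach as the paper: both rest on $[U^{(i)}:U^{(i)}\cap\alpha^{-n}(U^{(i)})]=d_i^n$ (tidiness via Lemma~\ref{lem:tidy_powers}) together with $[\alpha^{-n}(U^{(i)}):\alpha^{-n}(V)]\le k_i$, and both derive the final clause from Lemma~\ref{lem:tree_or_point} and the lower bound. Your presentation is in fact slightly more streamlined than the paper's, since you invoke Lemma~\ref{lem:tidy_powers} directly rather than splitting into cases according to whether $\{\alpha^{-n}(U^{(i)})\mid n\in\bbN_0\}$ is finite, and you work purely with indices instead of passing through the auxiliary orbits $U^{(i)}v_{-n}$ and $Vv_{-n}^{(i)}$.
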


\begin{proof}

Since $U^{(i)}$ is tidy, either the graph $\Gamma_{+}^{(i)}$ is a tree with out-valency $d_i$ by Theorem \ref{thm:tidy_tree}, or $\smash{\{\alpha^{-i}(U^{(i)})\mid i\in\bbN_{0}\}}$ is finite. Then $d_{1}=1=d_{2}$ by Lemma~\ref{lem:both_finite_case}. In both cases, $\smash{k_id_i^{n} = k_i|U^{(i)}v_{-n}^{(i)}|}$, as the following arguments show: In the former case this follows from Lemma \ref{lem:tidy_above_transitive}, in the latter we have $\smash{v_{-n}^{(i)} = v_0^{(i)}}$ whence $\smash{|U^{(i)}v_{-n}^{(i)}|=1}$. Next, we have
\[k_i|U^{(i)}v_{-n}^{(i)}| = [U^{(i)}:V][U^{({i})}:U^{(i)}\cap \alpha^{-n}(U^{(i)})].\]
Since $[\alpha^{-n}(U^{(i)}):\alpha^{-n}(V)]\le [U^{(i)}:V]$ we obtain
\begin{align*}
k_i|U^{(i)}v_{-n}^{(i)}|&\ge[U^{({i})}:U^{(i)}\cap \alpha^{-n}(U^{(i)})][\alpha^{-n}(U^{(i)}):\alpha^{-n}(V)] \\
&\ge [U^{({i})}:U^{(i)}\cap \alpha^{-n}(U^{(i)})][\alpha^{-n}(U^{(i)})\cap U^{(i)}:U^{(i)}\cap \alpha^{-n}(V)] \\
&=[U^{(i)}:U^{(i)}\cap\alpha^{-n}(V)] \\
&= |U^{(i)}v_{-n}|
\end{align*}
where $U^{(i)}v_{-n}$ is the orbit of $v_{-n}$ under the action $U^{(i)}$ in $\calP(G)$. Since $V\le U^{(i)}$, we have $k_id_i^{n}\ge|U^{(i)}v_{-n}|\ge|Vv_{-n}|$ which is the first inequality.

Since $\smash{\alpha^{-n}(V) = \alpha^{-n}(U^{(1)})\cap \alpha^{-n}(U^{(2)})\le\alpha^{-n}(U^{(i)})}$, we have $\smash{|Vv_{-n}|\ge |Vv_{-n}^{(i)}|}$ when considered as orbits in $\calP(G)$. The orbit-stabilizer theorem now implies
\begin{align*}
|Vv_{-n}^{(i)}| &= \dfrac{[U^{(i)}:V][V: \stab_{V}(v_{-n}^{(i)})]}{[U^{(i)}:V]} =\dfrac{[U^{(i)}:\stab_{V}(v_{-n}^{(i)})]}{k_i}\\
& \ge \dfrac{[U^{(i)}:\stab_{U^{(i)}}(v_{-n}^{(i)})]}{k_i} = \dfrac{|U^{(i)}v_{-n}^{(i)}|}{k_i} = \dfrac{d_i^{n}}{k_i},
\end{align*}
as required. Finally, if $\{\alpha^{-i}(V)\mid i\in\bbN_{0}\}$ is finite, then $\alpha^{-n}(V) = \alpha^{-n - k}(V)$ for $n$ sufficiently large and $k\in\bbN_{0}$ by Lemma \ref{lem:tree_or_point}. Thus $(|Vv_{-n}|)_{n\in\bbN_{0}}$ eventually stabilizes. This implies $d_{i}=1$.
\end{proof}

\begin{proof}[Proof of Proposition \ref{prop:tidy_subgroups_index_equal}] By Lemma \ref{lem:orbit_bounds}, we may assume that $\{\alpha^{-i}(V)\mid i\in\bbN_{0}\}$ is infinite. In this case, Lemma \ref{lem:tree_or_point} shows that $\Gamma_{+}$ is a rooted tree with root $v_0$. Let $t_n = |\out_{\Gamma_{+}}(v_{-n})|$ for $n\in\bbN_{0}$. Since $\Gamma_{+}$ is a rooted tree, $t_n = [V_{-n}:V_{-n-1}]$. 

The sequence $(t_{n})_{n\in\bbN_{0}}$ is non-increasing: Indeed, we have
\begin{displaymath}
  t_{n-1} = [V_{-n+1}:V_{-n}]\ge [V_{-n}:V_{-n-1}] = t_{n}
\end{displaymath}
for all $n\in\bbN$ because the homomorphism $\alpha$ maps $V_{-n}$ inside $V_{-n+1}$ by Lemma~\ref{lem:wil_lem2}, and $\alpha^{-1}(V_{-n})\cap V_{-n}=V_{-n-1}$.

Since the sequence $(t_{n})_{n\in\bbN_{0}}$ is non-negative, non-increasing and takes integer values it is eventually constant equal to some integer $t$. 
Since $\Gamma_{+}$ is a tree, we have $\smash{|Vv_{-n}|=\prod_{i=1}^{n-1}t_{i}}$. Given that $t_{i}= t$ for almost all $i\!\in\!\bbN_{0}$ there is a constant $l\in\bbQ$ such that $|Vv_{-n}|=lt^{n}$ for sufficiently large $n$. Then
\begin{displaymath}
k_{i}d_{i}^{n}\ge|Vv_{-n}|=lt^{n}\ge\frac{d_{i}^{n}}{k_{i}}
\end{displaymath}
for large enough $n\in\bbN$ and $i\in\{1,2\}$ by Lemma \ref{lem:orbit_bounds}. As a consequence, we have $d_{1}=t=d_{2}$ and thus $[U^{(1)}:U^{(1)}\cap\alpha^{-1}(U^{(1)})]=[U^{(2)}:U^{(2)}\cap\alpha^{-1}(U^{(2)})]$.
\end{proof}

The following theorem links the concept of being tidy to the scale function.

\begin{theorem}\label{thm:tidy_minimizing}
Let $G$ be a t.d.l.c. group, $\alpha\in\End(G)$ and $U\le G$ compact open. Then $U$ is tidy for $\alpha$ if and only if $U$ is minimizing for $\alpha$. In this case, $s(\alpha)=|\out_{\Gamma_{+}}(v_0)|$.
\end{theorem}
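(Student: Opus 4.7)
The plan is to set $d := [V:V\cap\alpha^{-1}(V)]$ for any tidy $V$, which by Proposition \ref{prop:tidy_subgroups_index_equal} is independent of the choice of tidy subgroup, and to prove $s(\alpha) = d$. An orbit--stabilizer computation gives $|\out_{\Gamma_+}(v_0)| = |Vv_{-1}| = [V:V\cap\alpha^{-1}(V)] = d$, so once $s(\alpha) = d$ is established the scale formula follows immediately.

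For $s(\alpha)\ge d$, I would show every compact open $W\le G$ satisfies $[W:W\cap\alpha^{-1}(W)]\ge d$. First, pass from $W$ to the tidy-above subgroup $W_{-N}$ of Theorem \ref{thm:tidy_above_exists}. The key monotonicity, extracted from the proof of Proposition \ref{prop:tidy_subgroups_index_equal}, is that $\alpha$ induces an injection $W_{-k}/W_{-k-1} \hookrightarrow W_{-k+1}/W_{-k}$, so the sequence $([W_{-k}:W_{-k-1}])_{k\ge 0}$ is non-increasing, and in particular
\[
[W_{-N}:W_{-N}\cap\alpha^{-1}(W_{-N})] = [W_{-N}:W_{-N-1}] \le [W:W\cap\alpha^{-1}(W)].
\]
Next, apply Theorem \ref{thm:tidy_existence} to $W_{-N}$ to obtain a tidy subgroup $V$; the construction there produces $[V:V\cap\alpha^{-1}(V)] = d_+/d_-$, where $d_+ = [W_{-N}:W_{-N-1}]$ and $d_-\ge 1$ is the common in-valency below the root of the auxiliary graph $\Gamma_{++}$. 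Combining, $d = d_+/d_- \le d_+ \le [W:W\cap\alpha^{-1}(W)]$. The reverse inequality $s(\alpha)\le d$ is immediate by taking $W$ to be any tidy subgroup, so $s(\alpha) = d$ and every tidy subgroup is minimizing.

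For the converse, suppose $W$ is minimizing, so $[W:W\cap\alpha^{-1}(W)] = d$. Applying the inequality just established to each $W_{-k}$ in place of $W$ gives $[W_{-k}:W_{-k-1}] \ge d$; combined with the non-increasing upper bound, this forces $[W_{-k}:W_{-k-1}] = d$ for every $k\ge 0$. Hence $[W:W_{-n}] = d^n$ and therefore $[W:W\cap\alpha^{-n}(W)] \le d^n$. For the reverse inequality, I would apply the equality of scale and common tidy index to the endomorphism $\alpha^n$: by Lemma \ref{lem:tidy_powers}, any subgroup tidy for $\alpha$ is tidy for $\alpha^n$ with $[V:V\cap\alpha^{-n}(V)] = d^n$, so $s(\alpha^n) = d^n$ and hence $[W:W\cap\alpha^{-n}(W)] \ge s(\alpha^n) = d^n$. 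Combining, $[W:W\cap\alpha^{-n}(W)] = d^n = [W:W\cap\alpha^{-1}(W)]^n$ for every $n\in\bbN$, and Lemma \ref{lem:tidy_powers} concludes that $W$ is tidy for $\alpha$. The main obstacle is this last step, where the characterization of tidiness via arbitrary $n$ has to be bootstrapped through $\alpha^n$; the rest of the argument is essentially an assembly of the preceding results.
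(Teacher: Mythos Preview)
Your argument is correct, and the route you take for the converse direction (minimizing $\Rightarrow$ tidy) is genuinely different from the paper's. The paper first argues directly that a minimizing $U$ must be tidy above (otherwise some $U_{-n}$ would have strictly smaller displacement index, contradicting minimality), and then applies the $\Gamma_{++}$ construction to $U$ itself: since $U$ is minimizing, the equality $d^{+}=d=d^{+}/d^{-}$ forces $d^{-}=1$, so $\Gamma_{+}=\desc_{\Gamma_{++}}(v_{0})$ is already a regular rooted tree and Theorem~\ref{thm:tidy_tree} gives tidiness. Your argument instead bypasses the separate tidy-above step entirely: from the universal lower bound $[W_{-k}:W_{-k-1}]\ge d$ and the monotonicity $[W_{-k}:W_{-k-1}]\le[W_{-k+1}:W_{-k}]$ you pin down $[W:W_{-n}]=d^{n}$, then sandwich $[W:W\cap\alpha^{-n}(W)]$ between $d^{n}$ and $s(\alpha^{n})=d^{n}$ (the latter via your first part applied to $\alpha^{n}$ together with Lemma~\ref{lem:tidy_powers}), and conclude by the index criterion of Lemma~\ref{lem:tidy_powers}.

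What each approach buys: the paper's proof is more geometric and stays inside the picture of $\Gamma_{++}$, reading off tidiness from the in-valency $d^{-}=1$; yours is more algebraic and uniform, reducing everything to the single numerical criterion of Lemma~\ref{lem:tidy_powers} and never splitting into tidy-above/tidy-below. Two small points to tidy up in your write-up: you should note that when $\{\alpha^{-i}(W_{-N})\mid i\in\bbN_{0}\}$ is finite the $d_{+}/d_{-}$ formula is replaced by Lemma~\ref{lem:finite_vertices_case} (giving $d=1$, so the inequality is trivial), and you should make explicit the equality $[W:W\cap\alpha^{-n}(W)]=[\alpha^{n}(W):\alpha^{n}(W)\cap W]$ (the map $\alpha^{n}$ induces a bijection of coset spaces) so that the scale lower bound applies.
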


\begin{proof}
Suppose that $U$ is minimizing for $\alpha$. If $\{\alpha^{-k}(U)\mid k\in\bbN_{0}\}$ is finite, then $s(\alpha)=1$ by the definition of the scale function given in Section \ref{sec:Willis_theory} and Lemma~\ref{lem:finite_vertices_case}. Consequently, $\alpha(U)\le U$. Therefore, we have $U=U_{-}$ and $U_{--}\ge U_{-}=U$ is open and hence closed.

Assume now that $\{\alpha^{-k}(U)\mid k\in\bbN\}$ is infinite. First, we show that $U$ is tidy above for $\alpha$. Suppose otherwise. Then by Theorem \ref{thm:tidy_above_exists} and Lemma \ref{lem:moe_thm2.3_end} there is $n\in\bbN$ such that with $v_{-1}\in V(\Gamma)$ we have $|U_{n}v_{-1}|=|U_{+}v_{-1}|\lneq|Uv_{-1}|$ and such that $U_{-n}$ is tidy above for $\alpha$. Then
\begin{align*}
 [\alpha(U_{-n})&:\alpha(U_{-n})\cap U_{-n}]=[U_{-n}:U_{-n}\cap\alpha^{-1}(U_{-n})]=[U_{n}:U_{n}\cap\alpha^{-1}(U)] \\
 &=|U_{n}v_{-1}|\lneq|Uv_{-1}|=[U:U\cap\alpha^{-1}(U)]=[\alpha(U):\alpha(U)\cap U].
\end{align*}
where the equalities follow by applying the appropriate power of $\alpha$ to the respective quotient, using Lemma \ref{lem:wil_lem2}. This contradicts the assumption that $U$ is minimizing.

Now consider the graph $\Gamma_{++}$ associated to $U$ which has constant out-valency $d^{+}$, and constant in-valency $d^{-}$, excluding all $v\in V(\Gamma_{++})$ with $\psi(v)=0$, by Proposition \ref{prop:properties_of_gamma_++}. Since $U$ is tidy above, Remark \ref{rem:rel_gamma_++_gamma} implies that $d^{+}=|Uv_{-1}|=[U:U\cap\alpha^{-1}(U)]=[\alpha(U):\alpha(U)\cap U]$. Let $V$ denote the tidy subgroup constructed from the graph $\Gamma_{++}$ associated to $U$ by Theorem \ref{thm:tidy_existence}. Then the quotient $T$ of $\Gamma_{++}$ has out-valency
\begin{displaymath}
 d=[V:V\cap\alpha^{-1}(V)]=[\alpha(V):\alpha(V)\cap V].
\end{displaymath}
Furthermore, $d=d^{+}/d^{-}$ by Lemma \ref{lem:properties_of_T}. The fact that $U$ is minimizing now implies $d^{-}=1$. It follows that $\Gamma_{+} = \desc_{\Gamma_{++}}(v_0)$ is already a tree. Thus $U$ is tidy by Theorem~\ref{thm:tidy_tree}.

\vspace{0.2cm}
Conversely, assume that $U$ is tidy for $\alpha$. Let $V\le G$ be a compact open subgroup which is minimizing. Then $V$ is tidy by the above and Proposition \ref{prop:tidy_subgroups_index_equal} implies that
\begin{displaymath}
 s(\alpha)\!=\![\alpha(V):\alpha(V)\cap V]\!=\![V:V\cap\alpha^{-1}(V)]\!=\![U:U\cap\alpha^{-1}(U)]\!=\![\alpha(U):\alpha(U)\cap U].
\end{displaymath}
That is, $U$ is minimizing.
\end{proof}

\begin{corollary}\label{cor:scale_powers}
Let $G$ be a t.d.l.c. group and $\alpha\in\End(G)$. Then $s(\alpha^{n})=s(\alpha)^{n}$.
\end{corollary}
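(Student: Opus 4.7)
The plan is to leverage the equivalence between tidiness and being minimizing (Theorem \ref{thm:tidy_minimizing}) together with the stability of tidiness under taking powers (Lemma \ref{lem:tidy_powers}). The idea is that one and the same compact open subgroup $U$ can witness the scale of both $\alpha$ and $\alpha^n$, reducing the identity $s(\alpha^n)=s(\alpha)^n$ to an index computation that has essentially already been done.

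First I would invoke Theorem \ref{thm:tidy_existence} to obtain a compact open subgroup $U\le G$ which is tidy for $\alpha$. By the final clause of Lemma \ref{lem:tidy_powers}, the same $U$ is then tidy for $\alpha^n$ for every $n\in\bbN$. Applying Theorem \ref{thm:tidy_minimizing} to both $\alpha$ and $\alpha^n$, the subgroup $U$ is simultaneously minimizing for both endomorphisms, so that
\begin{displaymath}
s(\alpha)=[\alpha(U):\alpha(U)\cap U]\quad\text{and}\quad s(\alpha^n)=[\alpha^n(U):\alpha^n(U)\cap U].
\end{displaymath}

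The next step is to rewrite these indices on the preimage side. For any continuous endomorphism $\beta$ and any compact open $U$, applying $\beta$ induces a bijection $U/(U\cap\beta^{-1}(U))\to\beta(U)/(\alpha(U)\cap U)$: surjectivity is immediate and injectivity follows from the definition of $\beta^{-1}(U)$. Hence $[\alpha(U):\alpha(U)\cap U]=[U:U\cap\alpha^{-1}(U)]$ and likewise $[\alpha^n(U):\alpha^n(U)\cap U]=[U:U\cap\alpha^{-n}(U)]$. Now, since $U$ is tidy for $\alpha$, Lemma \ref{lem:tidy_powers} yields the key identity $[U:U\cap\alpha^{-n}(U)]=[U:U\cap\alpha^{-1}(U)]^n$, and chaining the equalities gives $s(\alpha^n)=s(\alpha)^n$.

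There is no genuine obstacle here; the only minor point to verify carefully is the bijection $U/(U\cap\beta^{-1}(U))\to\beta(U)/(\beta(U)\cap U)$, which is needed because $\beta$ need not be injective on $U$. Everything else is a direct citation of the structural results already established, most importantly the fact that a single tidy subgroup serves as a minimizer for all positive powers of $\alpha$.
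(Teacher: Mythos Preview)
Your proof is correct and follows essentially the same route as the paper's own argument: obtain a tidy $U$ via Theorem~\ref{thm:tidy_existence}, use Lemma~\ref{lem:tidy_powers} to see that $U$ is tidy for $\alpha^{n}$ and that $[U:U\cap\alpha^{-n}(U)]=[U:U\cap\alpha^{-1}(U)]^{n}$, and then invoke Theorem~\ref{thm:tidy_minimizing} to identify these indices with $s(\alpha^{n})$ and $s(\alpha)^{n}$. The only blemish is a typo in your displayed bijection, where $\alpha(U)\cap U$ should read $\beta(U)\cap U$; the argument itself is fine.
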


\begin{proof}
By Theorem \ref{thm:tidy_existence} there is a compact open subgroup $U\le G$ which is tidy for $\alpha$. Following Theorem \ref{thm:tidy_minimizing} the group $U$ is minimizing and therefore
\begin{displaymath}
 s(\alpha)=[\alpha(U):\alpha(U)\cap U]=[U:U\cap\alpha^{-1}(U)].
\end{displaymath}
Since $U$ is also tidy for $\alpha^{n}$ by Lemma \ref{lem:tidy_powers} we conclude, using the same lemma,~that
\begin{displaymath}
 s(\alpha^{n})=[\alpha^{n}(U):\alpha^{n}(U)\cap U]=[U:U\cap\alpha^{-n}(U)]=[U:U\cap\alpha^{-1}(U)]^{n}=s(\alpha)^{n}. \qedhere
\end{displaymath}
\end{proof}

M{\"o}ller's spectral radius formula \cite[Theorem 7.7]{Moe02} for the scale may be proven as in \cite[Proposition 18]{Wil15} but with reference to Theorem \ref{thm:tidy_existence} for the existence of tidy subgroups.

\begin{theorem}
Let $G$ be a t.d.l.c. group, $\alpha\in\End(G)$ and $U\le G$ compact open. Then $s(\alpha)=\lim_{n\to\infty}[\alpha^{n}(U):\alpha^{n}(U)\cap U]^{1/n}$. \qed
\end{theorem}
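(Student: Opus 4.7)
The plan is to sandwich $[\alpha^{n}(U):\alpha^{n}(U)\cap U]$ between $s(\alpha)^{n}$ and $C\cdot s(\alpha)^{n}$ for a constant $C$ independent of $n$, so that after taking $n$-th roots the factor $C^{1/n}\to 1$ forces the limit to equal $s(\alpha)$. A preliminary reformulation helps: the restriction $\alpha^{n}|_{U}\colon U\to\alpha^{n}(U)$ is a continuous surjection, and the preimage of $\alpha^{n}(U)\cap U$ under $\alpha^{n}|_{U}$ is $U\cap\alpha^{-n}(U)$ (using $U\subseteq\alpha^{-n}(\alpha^{n}(U))$); hence
\[
  [\alpha^{n}(U):\alpha^{n}(U)\cap U] \;=\; [U:U\cap\alpha^{-n}(U)].
\]
This lets the argument proceed entirely with inverse images, which behave well under intersections.

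The lower bound is immediate: taking $U$ as a trial subgroup in the definition of the scale gives $s(\alpha^{n})\le [\alpha^{n}(U):\alpha^{n}(U)\cap U]$, and Corollary~\ref{cor:scale_powers} supplies $s(\alpha^{n})=s(\alpha)^{n}$, whence $s(\alpha)\le[\alpha^{n}(U):\alpha^{n}(U)\cap U]^{1/n}$. For the upper bound, Theorem~\ref{thm:tidy_existence} produces a compact open subgroup $V\le G$ that is tidy for $\alpha$. By Lemma~\ref{lem:tidy_powers} the subgroup $V$ is tidy for every power $\alpha^{n}$, hence minimizing by Theorem~\ref{thm:tidy_minimizing}, so $[V:V\cap\alpha^{-n}(V)]=s(\alpha^{n})=s(\alpha)^{n}$. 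Now set $W:=U\cap V$, a compact open subgroup of finite index in both $U$ and $V$.

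The upper bound then follows from the chain
\[
  [U:U\cap\alpha^{-n}(U)] \;\le\; [U:W]\,[W:W\cap\alpha^{-n}(W)] \;\le\; [U:W]\,[V:W]\,[W:W\cap\alpha^{-n}(V)] \;\le\; [U:W]\,[V:W]\,s(\alpha)^{n},
\]
where the first step uses $W\cap\alpha^{-n}(W)\le U\cap\alpha^{-n}(U)$ combined with the tower law applied to $W\cap\alpha^{-n}(W)\le W\le U$; the second step uses $[\alpha^{-n}(V):\alpha^{-n}(W)]\le[V:W]$, obtained from the well-defined injection $\alpha^{-n}(V)/\alpha^{-n}(W)\hookrightarrow V/W$ induced by $\alpha^{n}$ (here $W\le V$ ensures the map is well-defined); and the third step is the standard fact that the $W$-orbit of the trivial coset in $V/(V\cap\alpha^{-n}(V))$ has size $[W:W\cap\alpha^{-n}(V)]$ and so is bounded above by $[V:V\cap\alpha^{-n}(V)]$. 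Setting $C:=[U:W][V:W]$, combining both bounds and taking $n$-th roots yields $s(\alpha)\le [\alpha^{n}(U):\alpha^{n}(U)\cap U]^{1/n}\le C^{1/n}s(\alpha)$, and letting $n\to\infty$ gives the claim. The main obstacle is the upper bound: the endomorphism setting (as opposed to automorphisms) means $\alpha^{n}$ can have a nontrivial kernel and indices need not be preserved, so one has to be careful that each index manipulation used above remains valid; the key simplification is that all three comparisons only use the tower law for subgroup indices together with the elementary bound on $[\alpha^{-n}(V):\alpha^{-n}(W)]$, both of which survive the passage to endomorphisms.
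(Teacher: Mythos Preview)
Your proof is correct. The paper does not give its own proof of this theorem; it simply states that the argument of \cite[Proposition~18]{Wil15} goes through, with Theorem~\ref{thm:tidy_existence} supplying the tidy subgroup. Your sandwich argument---using the definition of the scale for the lower bound and a tidy subgroup $V$ together with finite-index comparisons through $W=U\cap V$ for the upper bound---is exactly that standard approach, and each index manipulation you use (the bijection $U/(U\cap\alpha^{-n}(U))\to\alpha^{n}(U)/(\alpha^{n}(U)\cap U)$, the injection $\alpha^{-n}(V)/\alpha^{-n}(W)\hookrightarrow V/W$, and the orbit bound inside $V/(V\cap\alpha^{-n}(V))$) is valid for endomorphisms as you note.
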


\section{The Tree-Representation Theorem}\label{sec:tree_rep_thm}

In this section, we prove an analogue of the following tree representation theorem for automorphisms due to Baumgartner and Willis \cite{BW04}, see also \cite{Hor15}.

\begin{theorem}[{\cite[Theorem 4.1]{BW04}}]\label{thm:tree_rep_bw}
Let $G$ be a t.d.l.c. group, $\alpha\!\in\!\Aut(G)$ of infinite order and $U\!\le\! G$ compact open as well as tidy for $\alpha$. Then there is a regular tree $T$ of degree $s(\alpha)+1$ and a homomorphism $\varphi:U_{++}\rtimes\langle \alpha\rangle\to \Aut(T)$ such that 
\begin{itemize}
  \item[(i)] $\varphi(U_{++}\rtimes\langle\alpha\rangle)$ fixes an end $\omega\in\partial T$ and is transitive on $\partial T\setminus \{\omega\}$,
  \item[(ii)] the stabilizer of each end in $\partial T\setminus \{\omega\}$ is conjugate to $(U_{+}\cap U_{-})\rtimes \langle \alpha \rangle$,
  \item[(iii)] $\ker(\varphi)$ is the largest compact normal subgroup $N\unlhd U_{++}$ with $\alpha(N)=N$,
  \item[(iv)] $\varphi(U_{++})$ is the set of elliptic elements in $\varphi(U_{++}\rtimes \langle\alpha\rangle)$.
\end{itemize}
\end{theorem}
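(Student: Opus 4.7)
The plan is to build $T$ as a bi-infinite extension of the directed rooted tree $\Gamma_{+}$ from Section \ref{sec:char_tidy_subgroups}, exploiting the invertibility of $\alpha$. By Theorem \ref{thm:tidy_tree}, $\Gamma_{+}$ is a regular rooted directed tree at $v_{0}=U$ with constant out-valency $s(\alpha)$ (using Theorem \ref{thm:tidy_minimizing}). To promote this ``half-tree'' to a regular tree of degree $s(\alpha)+1$ with a distinguished end, I would set $v_{i}:=\alpha^{i}(U)\in\calP(G)$ for all $i\in\bbZ$ (which makes sense since $\alpha\in\Aut(G)$) and define $T$ as the undirected graph whose vertex set is the orbit of $\{v_{i}\mid i\in\bbZ\}$ under $U_{++}$ in $\calP(G)$, with geometric edges $\{gv_{i},gv_{i-1}\}$. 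The subgraph of descendants of $v_{0}$ then coincides with $\Gamma_{+}$, while a new ``upward'' neighbour $v_{1}$ of $v_{0}$ is supplied by $\alpha$.

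First I would verify that $T$ is indeed a regular tree of degree $s(\alpha)+1$. The descendant structure at $v_{0}$ gives $s(\alpha)$ outgoing edges by Theorem \ref{thm:tidy_tree}; applying the automorphism $\alpha^{i}$ transports this to each $v_{i}$, yielding the required valency at every vertex. The tree property can be checked either by showing that $\alpha$ acts on the vertex set as a translation along the bi-infinite ray $(v_{i})_{i\in\bbZ}$ (so no cycles arise from concatenating $\alpha$-shifts with $\Gamma_{+}$-arcs), or, equivalently, via the depth/level argument from Lemma \ref{lem:gamma_+_depth} adapted to $\bbZ$-indexing. The action of $U_{++}\rtimes\langle\alpha\rangle$ on $T$ is the natural one: $U_{++}$ by left multiplication on cosets, and $\alpha$ by $gv_{i}\mapsto\alpha(g)v_{i+1}$; compatibility of these two gives the homomorphism $\varphi$.

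Properties (i)--(iv) then follow from the graph-theoretic framework. For (i), the end $\omega$ represented by the ray $(v_{i})_{i\ge 0}$ is fixed by both $U_{++}$ and $\alpha$; transitivity on $\partial T\setminus\{\omega\}$ reduces, after applying a power of $\alpha$, to arc-transitivity of $U_{+}$ on descendants of $v_{0}$ at any fixed depth, which is Lemma \ref{lem:tidy_above_transitive}. For (ii), the stabilizer of the end descending through $v_{0}$ opposite to $\omega$ is $\bigcap_{n\ge 0}\alpha^{-n}(\mathrm{Stab}_{U_{+}}(v_{0}))\rtimes\langle\alpha\rangle$, which simplifies to $(U_{+}\cap U_{-})\rtimes\langle\alpha\rangle$ by a standard computation using Lemma \ref{lem:pseudo_nub} and $\mathrm{Stab}_{U_{++}}(v_{0})=U_{+}$ (see Lemma \ref{lem:tidy_below}\ref{item:lem:tidy_below:U_+}); conjugation transports this to any other non-$\omega$ end. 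For (iv), each $h\in U_{++}$ lies in some $\alpha^{n}(U_{+})$ and hence fixes $v_{-n}$, whereas any element of the form $h\alpha^{k}$ with $k\neq 0$ shifts the level by $k$ and is therefore hyperbolic. Finally (iii) follows because the kernel of $\varphi$, being the intersection of vertex stabilizers, consists of those $u\in U_{++}$ fixing every $\alpha^{n}(U_{+})$-coset; this is a compact $\alpha$-invariant normal subgroup, and any such subgroup acts trivially on the coset tree by definition, giving maximality.

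The main obstacle I anticipate is the tree property: one has to rule out ``horizontal'' identifications, i.e. ensure that distinct cosets $gv_{i}$ and $g'v_{i}$ at the same level are never joined by paths shorter than those forced by the $\Gamma_{+}$-structure. For this, Lemma \ref{lem:equal_vert_in_gamma} handles the comparison of levels, and a careful use of the fact that $U$ is tidy (not merely tidy above) is required below the root, where Lemma \ref{lem:pseudo_nub} prevents $U_{++}\cap U_{--}$ from creating new adjacencies. A secondary subtlety is the identification of $\ker\varphi$ with the \emph{largest} such subgroup in (iii); one would argue that any compact normal $\alpha$-invariant $N\trianglelefteq U_{++}$ acts trivially on $V(T)$ because $N\le U_{+}$ (using compactness and that $N$ stabilizes the end $\omega$) and then $\alpha$-invariance forces $N\le\bigcap_{n}\alpha^{n}(U_{+})$, which is contained in the kernel.
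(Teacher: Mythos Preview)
The paper does not prove this theorem; it is quoted verbatim from \cite{BW04} and serves only as motivation for the endomorphism analogue, Theorem~\ref{thm:tree_rep_thm}. There is thus no ``paper's own proof'' to compare against.

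That said, your construction is essentially the one the paper carries out for Theorem~\ref{thm:tree_rep_thm}: extend the indexing to $v_{i}=\alpha^{i}(U)$ for $i\in\bbZ$, form the graph on $\{uv_{i}\mid u\in U_{++},\ i\in\bbZ\}$ (the paper calls this $\overline{\Gamma}_{++}$), let $U_{++}$ act by left multiplication and $\alpha$ by the shift $\rho^{-1}$, and verify that the result is a regular tree by checking that every vertex has in-valency~$1$ (your ``depth/level'' suggestion). Because $\alpha$ is an automorphism here, several complications in the paper's argument disappear: $\rho$ is trivially well-defined (no $\bik(\alpha)$ is needed), and Lemma~\ref{lem:distinct_vertices} is easier. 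Your arguments for (i), (iii), (iv) match the paper's almost step by step.

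One point to tighten: part~(ii) is not treated in the paper's endomorphism theorem at all, so you are on your own there. Your expression $\bigcap_{n\ge 0}\alpha^{-n}(\mathrm{Stab}_{U_{+}}(v_{0}))\rtimes\langle\alpha\rangle$ is not quite right as written---$\mathrm{Stab}_{U_{+}}(v_{0})=U_{+}$, so this gives $U_{-}\rtimes\langle\alpha\rangle$ rather than $(U_{+}\cap U_{-})\rtimes\langle\alpha\rangle$. The correct computation is that the $U_{++}$-part of the stabilizer of the end $-\omega=[(v_{-i})_{i\ge 0}]$ is $\{u\in U_{++}\mid u\in\alpha^{-n}(U)\text{ for all large }n\}=U_{++}\cap U_{--}$, which equals $U_{+}\cap U_{-}$ by Lemma~\ref{lem:pseudo_nub}; and separately one checks that every $\alpha^{k}$ fixes $-\omega$ (it shifts the ray but preserves the end). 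This is what you intend, but the formula you wrote does not encode it.
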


To prove an analogous statement for endomorphisms, we let $\alpha\in\End(G)$ have infinite order and $U\le G$ compact open as well as tidy for $\alpha$. Let $S:=U_{++}\rtimes\langle\alpha\rangle$ be the topological semidirect product semigroup of the (semi)group $U_{++}\le G$ and the semigroup $\langle\alpha\rangle\le\End(G)$, where $\End(G)$ is equipped with the compact-open topology and $\langle\alpha\rangle$ acts continuously on $U_{++}$ by endomorphisms as $\alpha(U_{++})=U_{++}$, see \cite[Theorem 2.9, Theorem 2.10]{CHK83}. In particular:

\begin{enumerate}
  \item Elements of $S$ have the form $(u,\alpha^{k})$ for some $u\in U_{++}$ and $k\in\bbN_{0}$. We identify $(U_{++},\id)$ with $U_{++}$, and $(\id,\langle\alpha\rangle)$ with $\langle\alpha\rangle$.
  \item Composition in $S$ is given by $(u_0,\alpha^{k_0})(u_1,\alpha^{k_1}) = (u_0\alpha^{k_0}(u_1),\alpha^{k_0+k_1})$.
  \item The topology on $S$ is the product topology on the set $U_{++}\times\langle\alpha\rangle$.
  \item The subsemigroup of $S$ generated by $(\id, \alpha)$ is isomorphic to $(\bbN, +)$ because $\alpha\in\End(G)$ has infinite order.
\end{enumerate}

We split the construction of the desired tree into the cases $s(\alpha)=1$ and $s(\alpha)>1$. First, assume $s(\alpha)>1$. Recall that $v_{-i}:=\alpha^{-i}(U)\in\calP(G)$ for $i\ge 0$. We extend this definition to positive indices by setting $v_{i}:=\alpha^{i}(U)\in\calP(G)$ for all $i\in\bbZ$. The following lemma shows that these vertices are all distinct.

\begin{lemma}\label{lem:distinct_vertices}
Retain the above notation. In particular, assume $s(\alpha)>1$. Suppose $\alpha^{m}(U) = \alpha^{n}(U)$ for some $n,m\in\bbZ$. Then $m = n$.
\end{lemma}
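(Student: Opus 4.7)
I argue by contradiction: assume $m \neq n$, say $m < n$, and derive $s(\alpha) \le 1$, contradicting the standing hypothesis $s(\alpha) > 1$. The argument splits into three cases according to the signs of $m$ and $n$.

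In the case $0 \le m < n$, since $U$ is tidy for $\alpha$, Lemma~\ref{lem:tidy_powers} gives that $U$ is tidy for every $\alpha^k$, hence minimizing for $\alpha^k$ by Theorem~\ref{thm:tidy_minimizing}. Combined with Corollary~\ref{cor:scale_powers}, this yields $[\alpha^k(U) : \alpha^k(U) \cap U] = s(\alpha^k) = s(\alpha)^k$ for every $k \in \bbN_{0}$. Applying this at $k=m$ and $k=n$ to the assumed equality $\alpha^m(U) = \alpha^n(U)$ gives $s(\alpha)^m = s(\alpha)^n$, which is impossible when $s(\alpha) > 1$.

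In the case $m < n \le 0$, rewrite the hypothesis as $\alpha^{-|m|}(U) = \alpha^{-|n|}(U)$ with $|m| > |n| \ge 0$, and apply $\alpha^{-1}$ repeatedly to obtain $\alpha^{-|m|-k}(U) = \alpha^{-|n|-k}(U)$ for every $k \ge 0$. Thus the sequence $(\alpha^{-i}(U))_{i \ge |n|}$ is periodic with period dividing $|m|-|n|$, so $\{\alpha^{-i}(U) \mid i \in \bbN_{0}\}$ is finite. Lemma~\ref{lem:finite_vertices_case} then supplies a compact open subgroup $V$ that is tidy for $\alpha$ and satisfies $\alpha(V) \le V$; being minimizing by Theorem~\ref{thm:tidy_minimizing}, $V$ yields $s(\alpha) = [\alpha(V) : \alpha(V) \cap V] = 1$, again a contradiction.

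The remaining case $m < 0 \le n$ is the most delicate, and I expect it to be the main obstacle. Applying $\alpha^{|m|}$ to both sides of $\alpha^{-|m|}(U) = \alpha^n(U)$ yields $\alpha^{|m|+n}(U) = \alpha^{|m|}(\alpha^{-|m|}(U)) \subseteq U$, where the inclusion holds tautologically and, crucially, does not require injectivity of $\alpha$. Setting $N := |m|+n > 0$, we have $\alpha^N(U) \cap U = \alpha^N(U)$, so $s(\alpha^N) \le 1$ by the definition of the scale, and Corollary~\ref{cor:scale_powers} then forces $s(\alpha) = 1$. The difficulty here is that $\alpha$ need not be injective, so one cannot cancel $\alpha^{|m|}$ to compare $U$ with $\alpha^{|m|+n}(U)$ on equal footing; the one-sided inclusion above is nevertheless enough to produce a positive power of $\alpha$ that contracts $U$, which is precisely what drives the argument.
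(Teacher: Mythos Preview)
Your proof is correct. All three cases are handled soundly, and the case split matches the paper's. In the case $m<n\le 0$ your argument is essentially the paper's.

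In the other two cases you take a genuinely simpler route. For $0\le m<n$ the paper works considerably harder: it passes to $U_{+}$, uses the chain $U_{+}\le\alpha^{m}(U_{+})\le\alpha^{n}(U_{+})$ together with Lemma~\ref{lem:coset_calculations} and Corollary~\ref{cor:scale_powers} to find an element $u\in\alpha^{n}(U_{+})\setminus\alpha^{m}(U_{+})$, and then invokes the tidy decomposition and Lemma~\ref{lem:pseudo_nub} to show $u\notin\alpha^{m}(U)$. Your observation that $\alpha^{m}(U)=\alpha^{n}(U)$ forces $[\alpha^{m}(U):\alpha^{m}(U)\cap U]=[\alpha^{n}(U):\alpha^{n}(U)\cap U]$, hence $s(\alpha)^{m}=s(\alpha)^{n}$, bypasses all of this; the paper's argument does produce an explicit witness in $\alpha^{n}(U)\setminus\alpha^{m}(U)$, but that extra information is not needed for the lemma. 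For $m<0\le n$ the paper notes that $W:=\alpha^{m}(U)$ is compact (because it equals the forward image $\alpha^{n}(U)$) and open (as a preimage), and that $\alpha^{n-m}(W)\subseteq W$, whence $s(\alpha^{n-m})=1$. Your trick of applying $\alpha^{|m|}$ to get $\alpha^{|m|+n}(U)=\alpha^{|m|}(\alpha^{-|m|}(U))\subseteq U$ reaches the same power $n-m=|m|+n$ but avoids checking that $\alpha^{m}(U)$ is compact open; since $n-m\ge 1$ in either formulation, Corollary~\ref{cor:scale_powers} finishes as you say.
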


\begin{proof}
For $m,n\le 0$, an equality $\alpha^{-m}(U)=\alpha^{-n}(U)$ with $m\neq n$ implies that the set $\{\alpha^{-k}(U)\mid k\in\bbN_{0}\}$ is finite and hence $s(\alpha)=1$ by Lemma \ref{lem:finite_vertices_case}.

Now, let $0\le m<n$. Since $U$ is tidy for $\alpha$, it is tidy for $\alpha^n$ and $\alpha^m$ by Lemma \ref{lem:tidy_powers}. Lemma \ref{lem:coset_calculations} shows that $s(\alpha^n) = [\alpha^n(U_+):U_+]$ and $s(\alpha^m) = [\alpha^m(U_+):U_+]$. The inclusions $U_+\le \alpha^m(U_+)\le \alpha^n(U_+)$, which can be seen from the description of $U_+$ as elements in $U$ which admit $\alpha$-regressive trajectories contained in $U$, give a factorization of $s(\alpha^n)$ as follows
\begin{align*} s(\alpha^n) &= [\alpha^n(U_+):U_+] = [\alpha^n(U_+):\alpha^m(U_+)][\alpha^m(U_+):U_+]\\
& = [\alpha^n(U_+):\alpha^m(U_+)]s(\alpha^m).
\end{align*} 
Applying Corollary \ref{cor:scale_powers} gives $s(\alpha)^n = [\alpha^n(U_+):\alpha^m(U_+)]s(\alpha)^m$. Since $m < n$ and $s(\alpha)>1$, we get $[\alpha^{n}(U_+):\alpha^{m}(U_+)]\!\neq\! 1$. So there is $u\!\in\! \alpha^{n}(U_+)\!\setminus \alpha^{m}(U_+)\!\subseteq\! \alpha^{n}(U)$. For the sake of a contradiction, suppose $u\in \alpha^{m}(U)$. Since $U$ is tidy above, there are $u_{\pm}\in U_{\pm}$ with $u =\alpha^{m}(u_+)\alpha^{m}(u_-)$. It follows that $\alpha^{m}(u_+)^{-1}u \in \alpha^{n}(U_+)\le U_{++}$ since $\alpha^{m}(U_{+})\le \alpha^{n}(U_{+})$. Also, we have $\alpha^{m}(u_-)\in \alpha^{m}(U_{-})\le U_{-}\le U_{--}$, and so applying Lemma \ref{lem:pseudo_nub}, 
\[\alpha^{m}(u_+)^{-1}u\in U_{++}\cap U_{--}\le U_+\cap U_-\le \alpha^{m}(U_+).\]
It follows that $u\in\alpha^{m}(U_+)$, a contradiction. Thus $u\not\in\alpha^{m}(U)$ and $\alpha^{n}(U)\!\neq\! \alpha^{m}(U)$.

Finally, suppose $m < 0 < n$ and $\alpha^{m}(U) = \alpha^{n}(U)$. Then $\alpha^{m}(U)$ is a compact open subgroup which is stabilized by $\alpha^{n-m}$. This shows $s(\alpha^{n - m}) = 1$ which implies $s(\alpha) = 1$ by Corollary \ref{cor:scale_powers}. This contradicts the assumption $s(\alpha) > 1$.
\end{proof}

We define a directed graph $\overline{\Gamma}_{++}$ by setting 
\[V(\overline{\Gamma}_{++}) = \{uv_{i}\mid i\in\bbZ,u\in U_{++}\} \ \text{ and }\ E(\overline{\Gamma}_{++}) = \{(uv_{i},uv_{i - 1}\mid i\in\bbZ,u\in U_{++}\}.\]

Note that $\Gamma_{++}$ is a subgraph of $\overline{\Gamma}_{++}$ and that $U_{++}$ acts on $\overline{\Gamma}_{++}$ by automorphisms. We will show that the map $\rho$, defined in the paragraph preceding Proposition \ref{prop:properties_of_induced_map}, extends to an automorphism of $\overline{\Gamma}_{++}$. To do so, consider the following subgroups associated to $\alpha$:
\begin{displaymath}
  \parb^{-}(\alpha):=\{x\in G\mid\hbox{there exists a bounded }\alpha\hbox{-regressive trajectory for }x\},
\end{displaymath}
\begin{displaymath}
  \bik(\alpha):= \overline{\{x\in \parb^{-}(\alpha)\mid\alpha^{n}(x) = e \hbox{ for some }n\in\bbN\}}.
\end{displaymath}
It follows from \cite[Proposition 20]{Wil15}, \cite[Definition 12]{Wil15} and Theorem \ref{thm:tidy_minimizing} that $\alpha(\bik(\alpha)) = \bik(\alpha)\le U$. The same proposition implies that for $u_1,u_2\in U_{++}\le \parb^{-}(\alpha)$ with $\alpha(u_1) = \alpha(u_2)$ we have $u_1^{-1}u_2\in\bik(\alpha)\le U$.

Now define $\rho: \overline{\Gamma}_{++}\to \overline{\Gamma}_{++}$ as follows: Given $uv_{i}\in V(\overline{\Gamma}_{++})$, choose $u'\in U_{++}$ such that $\alpha(u') = u$ and set $\rho(uv_{i}) = u'v_{i-1}$.

\begin{proposition}
Retain the above notation. The map $\rho$ is an automorphism of the graph $\overline{\Gamma}_{++}$.
\end{proposition}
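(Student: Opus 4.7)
The plan is to imitate the proof of Proposition \ref{prop:properties_of_induced_map}, the new wrinkle being that on the positive indices $\alpha$ need not be injective on $U_{++}$; the kernel ambiguity is governed by $\bik(\alpha)$. The cornerstone will be the inclusion $\bik(\alpha) \leq \alpha^{m}(U)$ valid for every $m \in \bbZ$. Indeed, for $m \geq 0$, the stated identity $\alpha(\bik(\alpha)) = \bik(\alpha) \leq U$ iterates to $\bik(\alpha) = \alpha^{m}(\bik(\alpha)) \leq \alpha^{m}(U)$, while for $m \leq 0$ the inclusion $\bik(\alpha) \leq U \leq \alpha^{m}(U)$ is immediate.

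With this bound in hand, well-definedness of $\rho$ splits into two aspects. First, by the stated property of $\bik(\alpha)$, any two lifts $u', u'' \in U_{++}$ of $u$ satisfy $u'^{-1} u'' \in \bik(\alpha) \leq \alpha^{i-1}(U) = G_{v_{i-1}}$, so $u' v_{i-1} = u'' v_{i-1}$. Second, two representatives $u_0 v_i = u_1 v_j$ of the same vertex force $i = j$ by Lemma \ref{lem:distinct_vertices} and $u_0^{-1} u_1 \in \alpha^{i}(U)$; any lifts $u_k' \in U_{++}$ then satisfy $u_0'^{-1} u_1' \in \alpha^{-1}(\alpha^{i}(U))$. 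For $i \leq 0$ this preimage is simply $\alpha^{i-1}(U)$; for $i \geq 1$, the factorization $\alpha^{i}(U) = \alpha(\alpha^{i-1}(U))$ gives $\alpha^{-1}(\alpha^{i}(U)) = \alpha^{i-1}(U) \cdot \ker(\alpha) = \alpha^{i-1}(U)$, using $\ker(\alpha) \leq \bik(\alpha) \leq \alpha^{i-1}(U)$. In either case $u_0' v_{i-1} = u_1' v_{i-1}$.

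To establish bijectivity, I would exhibit the explicit inverse $\sigma\colon uv_i \mapsto \alpha(u)\, v_{i+1}$, which is well-defined because $\alpha$ preserves coset relations: $u^{-1} u' \in \alpha^i(U)$ forces $\alpha(u)^{-1}\alpha(u') \in \alpha^{i+1}(U)$. The identities $\rho \circ \sigma = \id = \sigma \circ \rho$ follow by choosing $u$ itself as the distinguished lift of $\alpha(u)$ under $\alpha$. Edge preservation in both directions is then immediate on common representatives: the edge $(u v_i, u v_{i-1})$ maps under $\rho$ to $(u' v_{i-1}, u' v_{i-2})$ and under $\sigma$ to $(\alpha(u) v_{i+1}, \alpha(u) v_i)$, both being edges of $\overline{\Gamma}_{++}$. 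I expect the well-definedness at the positive indices to be the main obstacle: it is the only step genuinely requiring the machinery of $\bik(\alpha)$ rather than a direct extension of Proposition \ref{prop:properties_of_induced_map}.
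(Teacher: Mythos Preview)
Your overall strategy mirrors the paper's: split well-definedness into the choice of lift and the choice of coset representative, treat $i\le 0$ by a direct preimage computation, invoke $\bik(\alpha)$ for $i\ge 1$, and exhibit the explicit inverse $\sigma\colon uv_{i}\mapsto\alpha(u)v_{i+1}$. Two points need attention.

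The minor one: your justification of $\bik(\alpha)\le\alpha^{m}(U)$ for $m\le 0$ via ``$U\le\alpha^{m}(U)$'' is false in the very regime under consideration, since $s(\alpha)>1$ forces $[U:U\cap\alpha^{-1}(U)]=s(\alpha)>1$ and hence $U\not\subseteq\alpha^{-1}(U)$. The conclusion is still correct: from $\alpha(\bik(\alpha))=\bik(\alpha)\le U$ one obtains $\alpha^{|m|}(\bik(\alpha))=\bik(\alpha)\le U$, whence $\bik(\alpha)\le\alpha^{m}(U)$.

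The substantive gap is the assertion $\ker(\alpha)\le\bik(\alpha)$, and with it the global identity $\alpha^{-1}(\alpha^{i}(U))=\alpha^{i-1}(U)$ for $i\ge 1$. By definition $\bik(\alpha)\subseteq\parb^{-}(\alpha)$, but elements of $\ker(\alpha)$ need not admit any $\alpha$-regressive trajectory at all. For a concrete failure take $G=\bbQ_{p}\times\bbZ/4$, $\alpha(x,a)=(x/p,2a)$ and $U=\bbZ_{p}\times\bbZ/4$: one checks via Lemma~\ref{lem:tidy_powers} that $U$ is tidy with $s(\alpha)=p$, yet $\ker(\alpha)=\{0\}\times\{0,2\}\not\subseteq\alpha^{2}(U)=p^{-2}\bbZ_{p}\times\{0\}$, so $\alpha^{-1}(\alpha^{3}(U))=p^{-2}\bbZ_{p}\times\{0,2\}\neq\alpha^{2}(U)$. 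What rescues the argument is exactly the datum you drop at this step, namely $u_{0}'^{-1}u_{1}'\in U_{++}$. The paper does not attempt a global computation of $\alpha^{-1}(\alpha^{i}(U))$; instead it picks a single preimage $u_{3}\in\alpha^{i-1}(U)$ of $u_{0}^{-1}u_{1}$ and applies the stated consequence of \cite[Proposition~20]{Wil15} to the pair $u_{0}'^{-1}u_{1}'$, $u_{3}$ (both with the same image under $\alpha$) to obtain $((u_{0}')^{-1}u_{1}')^{-1}u_{3}\in\bik(\alpha)\le\alpha^{i-1}(U)$, hence $u_{0}'^{-1}u_{1}'\in\alpha^{i-1}(U)$. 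The membership in $U_{++}\subseteq\parb^{-}(\alpha)$ is what makes this step go through.
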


\begin{proof}
We first show that $\rho$ is well-defined: By Lemma \ref{lem:distinct_vertices}, it suffices to suppose $u_0,u_1,u_0',u_1'\in U_{++}$ and $i\in\bbZ$ are such that $u_0v_{i} \!=\! u_1v_{i}$, $\alpha(u_0') \!=\! u_0$ and $\alpha(u_1') \!=\! u_1$. Then $u_0^{-1}u_1\in \alpha^{i}(U)$ and $(u_0')^{-1}u_1'\in \alpha^{-1}(\alpha^{i}(U))\cap U_{++}$.

For $i\ge 1$, we have $u_{0}^{-1}u_{1}\in\alpha^{i}(U)=\alpha(\alpha^{i-1}(U))$, hence there is $u_3\in \alpha^{i-1}(U)$ with $\alpha(u_3) = u_0^{-1}u_1$ and we get $((u_0')^{-1}u_1')^{-1}u_3\in \bik(\alpha)\le \alpha^{i-1}(U)$ as $\bik(\alpha)\le U$ and $\alpha(\bik(\alpha))=\bik(\alpha)$. Thus $(u_0')^{-1}u_1'\in \alpha^{i-1}(U)$. This shows $u_0'v_{i - 1} = u_1'v_{i - 1}$.

For $i\le 0$, we argue as in the proof of Proposition \ref{prop:properties_of_induced_map}: We have $\alpha((u_0')^{-1}u_{1}') = u_{0}^{-1}u_1\in \alpha^{i}(U)$ and so $(u_0')^{-1}u_1'\in \alpha^{i-1}(U)$. This implies $w_0'v_{i-1} = u_{1}'v_{i-1}$. Overall, $\rho$ is well-defined.

To see that $\rho$ is a bijection on $V(\overline{\Gamma}_{++})$ note $\rho(\alpha(u)v_{i+1}) = uv_i$ and that $\rho^{-1}$ defined by $uv_{i}\mapsto \alpha(u)v_{i+1}$ is well-defined by the following argument: If $uv_{i} =u'v_{i}$, then $u^{-1}u'\in \alpha^{i}(U)$ and $\alpha(u)^{-1}\alpha(u')\in \alpha^{i+1}(U)$. Thus $\alpha(u)v_{i+1} = \alpha(u')v_{i+1}$.
\end{proof}

Note that $\overline{\Gamma}_{++}$ contains $\Gamma_{++}$ as a subgraph and $\Gamma_{++}$ is a forest of rooted regular trees by Remark \ref{rem:gamma_++_tidy} and Lemma \ref{lem:properties_of_T}. For $v\in V(\overline{\Gamma}_{++})$, there is $n\in\bbN_{0}$ such that $\rho^{n}(v)\in V(\Gamma_{++})$. This shows that the in-valency of $v$ is $1$. 

We find that $\overline{\Gamma}_{++}$ is a regular tree with constant out-valency because $\langle U_{++},\rho\rangle$ acts transitively on $V(\overline{\Gamma}_{++})$. Also, $|\out_{\overline{\Gamma}_{++}}(v_{0})|=[U\cap U_{++}:U\cap U_{++}\cap\alpha^{-1}(U)]$ is equal to $[U:U\cap\alpha^{-1}(U)]$ by Remark \ref{rem:rel_gamma_++_gamma} which in turn is equal to $s(\alpha)$ by Theorem \ref{thm:tidy_minimizing} and the equality $[U:U\cap\alpha^{-1}(U)]=[\alpha(U):\alpha(U)\cap U]$.

Since $\rho$ is a translation in $\Aut(\overline{\Gamma}_{++})$ we see that the subsemigroup generated by $\rho^{-1}$ is isomorphic to $(\bbN,+)$.

Define $\varphi: U_{++}\sqcup \langle \alpha \rangle \to \Aut(\overline{\Gamma}_{++})$ by $\varphi(u)(u'v_i)\! =\! uu'v_i$ for all $u,u'\in U_{++}$ and $\varphi(\alpha^{k}) = \rho^{-k}$ for all $k\in\bbN_{0}$.

\begin{lemma}\label{lem:tree_rep_cont}
Retain the above notation. The map $\varphi$ extends to a continuous semigroup homomorphism $\varphi:S\to\Aut(\overline{\Gamma}_{++})$. 
\end{lemma}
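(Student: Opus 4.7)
The plan is to extend $\varphi$ to all of $S$ by setting
\[
  \varphi(u, \alpha^k) := L_u \circ \rho^{-k},
\]
where $L_u \in \Aut(\overline{\Gamma}_{++})$ denotes left multiplication by $u$; this is the unique extension compatible with $\varphi(u) = L_u$ on $U_{++}$ and $\varphi(\alpha^k) = \rho^{-k}$ on $\langle \alpha \rangle$. Verifying the homomorphism property reduces, via the semigroup law $(u_0, \alpha^{k_0})(u_1, \alpha^{k_1}) = (u_0 \alpha^{k_0}(u_1), \alpha^{k_0+k_1})$ in $S$, to the intertwining relation $\rho^{-k} \circ L_u = L_{\alpha^k(u)} \circ \rho^{-k}$ for $u \in U_{++}$ and $k \in \bbN_0$. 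I would check this by applying both sides to an arbitrary vertex $w v_j \in V(\overline{\Gamma}_{++})$ and iterating the formula $\rho^{-1}(w v_j) = \alpha(w) v_{j+1}$; the result is $\alpha^k(uw) v_{j+k} = \alpha^k(u) \alpha^k(w) v_{j+k}$ on both sides, which is valid since $\alpha$ is a homomorphism.

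For continuity, I would use that multiplication in $\Aut(\overline{\Gamma}_{++})$ with the permutation topology is (jointly) continuous, thereby reducing to continuity of the two restrictions $u \mapsto L_u$ on $U_{++}$ and $\alpha^k \mapsto \rho^{-k}$ on $\langle \alpha \rangle$ separately. Concretely, fixing a finite set of vertices $\{w_i v_{j_i}\}_{i=1}^n$, the condition $u w_i v_{j_i} = u_0 w_i v_{j_i}$ for all $i$ is equivalent to $u_0^{-1} u \in \bigcap_i w_i \alpha^{j_i}(U) w_i^{-1}$. For $j_i \le 0$, each $\alpha^{j_i}(U)$ is open in $G$ as an inverse image of $U$, so the conjugate set is open. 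For $j_i > 0$, I would invoke the regressive-trajectory description of $U_+$ to conclude $U_+ \subseteq \alpha^{j_i}(U_+) \subseteq \alpha^{j_i}(U)$, so that $w_i U_+ w_i^{-1} \subseteq w_i \alpha^{j_i}(U) w_i^{-1}$; openness then follows from $U_+$ being compact open in the natural tdlc topology on $U_{++}$. For the second restriction, Lemma \ref{lem:distinct_vertices} shows that distinct $k$ produce distinct vertices $v_k = \rho^{-k}(v_0)$, so the preimage of any basic open set containing $\rho^{-k_0}$ meets $\langle \alpha \rangle$ in at most $\{\alpha^{k_0}\}$; that this singleton is open for the subspace compact-open topology on $\langle \alpha \rangle$ follows by testing against the compact set $U$, combined with distinctness of the $v_k$.

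The hard part will be continuity of $u \mapsto L_u$ for vertices of positive index $j_i > 0$: here $\alpha^{j_i}(U)$ is merely compact (not open) in $G$, so openness of the relevant conjugates $w_i \alpha^{j_i}(U) w_i^{-1}$ must be extracted from the finer natural topology on $U_{++}$, in which $U_+$ is compact open, rather than from the subspace topology inherited from $G$. This hinges on the tidiness of $U$, which ensures $U_+$ has the required structure. A parallel subtlety is isolating $\alpha^{k_0}$ in $\langle \alpha \rangle$ with the compact-open topology, which ultimately rests on Lemma \ref{lem:distinct_vertices} together with the standing assumption $s(\alpha) > 1$.
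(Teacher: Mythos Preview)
Your proposal is correct and follows essentially the same approach as the paper. Both arguments define the extension by $\varphi(u,\alpha^{k})=L_{u}\circ\rho^{-k}$, verify the homomorphism property via the intertwining relation $\rho^{-1}L_{u}=L_{\alpha(u)}\rho^{-1}$ checked on vertices using $\rho^{-1}(u'v_{i})=\alpha(u')v_{i+1}$, and establish continuity by showing that vertex stabilizers in $U_{++}$ are open, splitting into the cases $i\le 0$ (where $\alpha^{i}(U)$ is open in $G$) and $i>0$ (where one uses $U_{+}=U\cap U_{++}$ from tidiness via Lemma~\ref{lem:tidy_below}). The only organisational difference is that you factor through joint continuity of composition in $\Aut(\overline{\Gamma}_{++})$, whereas the paper checks openness of $\{x\in S\mid\varphi(x)w=w'\}$ directly; and you explicitly address discreteness of $\langle\alpha\rangle$ in the compact-open topology, which the paper uses implicitly when asserting that $(V,\id)x$ is open in $S$.
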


\begin{proof}
Note that $\varphi$ extends separately both to a semigroup homomorphism of $U_{++}$, and the semigroup generated by $\alpha$. To show that it extends to a semigroup homomorphism of $S$ it suffices to show that $\varphi(\alpha)\varphi(u) = \varphi(\alpha(u))\varphi(\alpha)$. Then $\varphi(u,\alpha^{n}):=\varphi(u)\varphi(\alpha^{n})$ is well-defined for all $u\in U_{++}$ and $n\in\bbN_0$. Given a vertex $u'v_{i}\in V(\overline{\Gamma}_{++})$, we obtain as required:
\[\varphi(\alpha)\varphi(u)u'v_{i} = \rho^{-1}(uu'v_{i}) = \alpha(uu')v_{i+1} = \alpha(u)\rho^{-1}(u'v_{i}) = \varphi(\alpha(u))\varphi(\alpha)u'v_{i}.\]
To see that $\varphi$ is continuous it suffices to show that $\{x\in S\mid \varphi(x)w = w'\}$ is open in $S$ for all $w,w'\in V(\overline{\Gamma}_{++})$. We first show that the subgroup of $U_{++}$ stabilising $w'$ is open in $U_{++}$. Write $w' = u\alpha^{i}(U)$ where $u\in U_{++}$ and $i\in\bbZ$. The stabilizer is $u\alpha^i(U)u^{-1}\cap U_{++}$ which contains $u(\alpha^i(U)\cap U_{++})u^{-1}$. It suffices to show that $\alpha^i(U)\cap U_{++}$ is open in $U_{++}$. If $i\le 0$, then continuity of $\alpha$ implies $\alpha^i(U)$ is open in $G$. It follows that $\alpha^i(U)\cap U_{++}$ is open in $U_{++}$. Otherwise $i > 0$ and $U_+\le \alpha^i(U_+)\le \alpha^i(U)$. So we have $U_+\le \alpha^i(U)\cap U_{++}$. But $U$ is assumed to be tidy for $\alpha$ and so $U_{+} = U\cap U_{++}$ by Lemma \ref{lem:tidy_below}. This is an open set in $U_{++}$.  

We have shown that the stabilizer $V$ of $w'$ in $U_{++}$ is an open subgroup of $U_{++}$, so $x\in S$ with $x(w) = w'$ is contained in the open subset $(V,\id)x\subseteq S$ and $\varphi((V,\id)x)w = w'$.

\end{proof}
The following Lemma extends Lemma \ref{lem:tidy_below}\ref{item:lem:tidy_below:U_+}.
\begin{lemma}\label{lem:precise_preimage}
Retain the above notation. Let $x_0\in \alpha^{i}(U)\cap U_{++}$ ($i\in\bbZ$). Then there is $x_1\in \alpha^{i-1}(U)\cap U_{++}$ with $\alpha(x_1) = x_0$.
Hence, $\alpha^{i}(U)\cap U_{++} = \alpha^{i}(U_+)\cap U_{++}$.
\end{lemma}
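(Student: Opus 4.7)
The plan is to split on the sign of $i$: the case $i\le 0$ follows almost immediately from the fact that every $x_0\in U_{++}$ admits an $\alpha$-regressive preimage in $U_{++}$, while the case $i\ge 1$ is the real substance and uses tidiness in an essential way.

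First I would observe that $\alpha(U_{++})=U_{++}$: given $x_0\in U_{++}$, write $x_0=\alpha^n(w)$ with $w\in U_+$ and $n\in\bbN_0$. If $n\ge 1$, then $x_0=\alpha(\alpha^{n-1}(w))$ with $\alpha^{n-1}(w)\in U_{++}$; if $n=0$, then $w\in U_+$ has an $\alpha$-regressive trajectory $(w_j)_{j\in\bbN_0}$ contained in $U_+\le U_{++}$ and $x_0=\alpha(w_1)$ with $w_1\in U_{++}$. For $i\le 0$, noting that $\alpha^{i-1}(U)=\alpha^{-1}(\alpha^i(U))$, any such preimage $x_1\in U_{++}$ of $x_0$ automatically lies in $\alpha^{i-1}(U)$, which settles this case.

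For $i\ge 1$, I would write $x_0=\alpha^i(u)$ with $u\in U$ and invoke tidiness above to decompose $u=u_+u_-$ with $u_\pm\in U_\pm$, so that $x_0=\alpha^i(u_+)\alpha^i(u_-)$. Since $\alpha(U_-)\le U_-$ (by the characterisation of $U_-$ as elements whose $\alpha$-trajectory lies in $U$), we have $\alpha^i(u_-)\in U_-$; and since $\alpha^i(u_+)\in\alpha^i(U_+)\le U_{++}$ while $x_0\in U_{++}$, it follows that $\alpha^i(u_-)=\alpha^i(u_+)^{-1}x_0\in U_{++}\cap U_-\subseteq U_{++}\cap U=U_+$ by Lemma~\ref{lem:tidy_below}. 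The decisive observation is then $U_+\subseteq\alpha^i(U_+)$: any $v\in U_+$ has a regressive trajectory $(v_j)_{j\in\bbN_0}$ contained in $U_+$, so $v=\alpha^i(v_i)\in\alpha^i(U_+)$. Combining these yields $x_0\in\alpha^i(U_+)$, and writing $x_0=\alpha^i(w)$ with $w\in U_+$, the element $x_1:=\alpha^{i-1}(w)\in\alpha^{i-1}(U_+)\subseteq\alpha^{i-1}(U)\cap U_{++}$ satisfies $\alpha(x_1)=x_0$.

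For the ``hence'' clause, the inclusion $\alpha^i(U_+)\cap U_{++}\subseteq\alpha^i(U)\cap U_{++}$ is trivial. For the reverse, the $i\ge 1$ argument above already shows $x_0\in\alpha^i(U_+)$; for $i\le 0$, the condition $x_0\in\alpha^i(U)$ means $\alpha^{-i}(x_0)\in U$, and since $\alpha^{-i}(x_0)\in U_{++}$ as well, Lemma~\ref{lem:tidy_below} places it in $U\cap U_{++}=U_+$, so $x_0\in\alpha^i(U_+)$. The main obstacle is the $i\ge 1$ case: an arbitrary preimage of $x_0\in\alpha^i(U)$ under $\alpha$ need not lie in $U_{++}$, and the tidiness-above decomposition, together with Lemma~\ref{lem:tidy_below} and the absorption $U_+\subseteq\alpha^i(U_+)$, is exactly what produces a preimage of the required form.
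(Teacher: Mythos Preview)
Your proof is correct and follows essentially the same approach as the paper: split on the sign of $i$, and for the nontrivial direction use the tidy-above decomposition $u=u_+u_-$ together with Lemma~\ref{lem:tidy_below}\ref{item:lem:tidy_below:U_+} to force $\alpha^i(u_-)\in U_+$, then the absorption $U_+\subseteq\alpha^i(U_+)$. The only cosmetic difference is that the paper treats $i\ge 0$ (rather than $i\ge 1$) via the decomposition and builds $x_1$ directly as $\alpha^{i-1}(u_+)\cdot x''$ with $\alpha(x'')=\alpha^i(u_-)$, whereas you first deduce $x_0\in\alpha^i(U_+)$ and then set $x_1=\alpha^{i-1}(w)$; the underlying mechanism is identical.
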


\begin{proof}
First, suppose $i\ge 0$ and let $x'\in U$ with $\alpha^{i}(x')=x_{0}$. Since $U$ is tidy there are $x_{+}'\in U_{+}$ and $x_{-}'\in U_{-}$ such that $x'=x_{+}'x_{-}'$. Then $x_{0}=\alpha^{i}(x_{+}')\alpha^{i}(x_{-}')$ where $\alpha^{i}(x_{+}')\in U_{++}$ and $\alpha^{i+1}(x_{-}')\in U_{-}$. Given that $x_{0}\in U_{++}$ we conclude that $\alpha^{i}(x_{-}')\in U_{-}\cap U_{++}\le U_{-}\cap U_{+}$ by Lemma \ref{lem:tidy_below}. Hence we may pick $x''\in U_{+}$ with $\alpha(x_{+}'')=\alpha^{i}(x_{-}')\in U_{+}\cap U_{-}$. Then $\alpha(\alpha^{i-1}(x_{+}')x'')=\alpha^{i}(x_{+}')\alpha(x'')=x_{0}$ where $\alpha^{i-1}(x_{+}')x''\in U_{++}\cap\alpha^{i}(U)$ as $x''\in U_{+}$. So we may set $x_{1}:=\alpha^{i-1}(x_{+}')x''$ which gives the first claim when $i \ge 0$.
By repeating this process, we find $x_i\in U\cap U_{++}$ such that $\alpha^i(x_i) = x_0$. Lemma \ref{lem:tidy_below} gives $U\cap U_{++} = U_{+}$ and so $x_0\in \alpha^i(U_+)$. We conclude that $\alpha^i(U)\cap U_{++}\le \alpha^{i}(U_+)\cap U_{++}$.

If $i<0$, pick $x_{1}\in U_{++}$ with $\alpha(x_{1})=x_{0}$. Then $\alpha(x_{1})\in\alpha^{i}(U)$ and therefore $x_{1}\in\alpha^{-1}(\alpha^{i}(U))=\alpha^{i-1}(U)$ as $i<0$. Furthermore, $\alpha^{-i}(x_0)\in U\cap U_{++} = U_{+}$ and thus $x_0\in \alpha^{i}(U_+)$. This shows $\alpha^i(U)\cap U_{++}\le \alpha^{i}(U_+)\cap U_{++}$. 

Finally, note that $\alpha^{i}(U_+)\cap U_{++}\le \alpha^i(U)\cap U_{++}$ for all $i\in\bbZ$ as $U_{+}\le U$.
\end{proof}

We are now in position to prove an analogue of Theorem \ref{thm:tree_rep_bw} for endomorphisms.

\begin{theorem}\label{thm:tree_rep_thm}
Let $G$ be a t.d.l.c. group, $\alpha\!\in\!\End(G)$ of infinite order, $U\le G$ compact open as well as tidy for $\alpha$, and $S:=U_{++}\rtimes\langle\alpha\rangle$. Then there is a tree $T$ and a continuous semigroup homomorphism $\varphi:S\to\Aut(T)$ such that
\begin{itemize}
  \item[(i)] $T$ has constant valency $s(\alpha)+1$,
  \item[(ii)] $\varphi(S)$ fixes an end $\omega\in\partial T$ and is transitive on $\partial T\setminus \{\omega\}$,	
  \item[(iii)] $\ker(\varphi)$ is the largest compact normal subgroup $N\unlhd U_{++}$ with $\alpha(N)=N$,
  \item[(iv)] $\varphi(U_{++})$ is the set of elliptic elements of $\varphi(S)$.
\end{itemize}
\end{theorem}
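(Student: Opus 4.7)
The plan is to take the tree $T := \overline{\Gamma}_{++}$ together with the continuous semigroup homomorphism $\varphi : S \to \Aut(\overline{\Gamma}_{++})$ constructed in Lemma~\ref{lem:tree_rep_cont}, and verify (i)--(iv) in turn for the case $s(\alpha) > 1$. The degenerate case $s(\alpha) = 1$ is handled separately by taking $T$ to be the bi-infinite line, on which $\varphi(\alpha)$ is the unit shift and $\varphi(U_{++})$ acts trivially; all four claims are then immediate, using that $U_{++}$ is compact in this case.

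For (i), I would combine the in-valency-one observation already proved before the statement with transitivity of $\langle U_{++}, \rho\rangle$ on $V(\overline{\Gamma}_{++})$ to conclude that the out-valency is constant and equal to $|\out_{\overline{\Gamma}_{++}}(v_0)| = [U \cap U_{++} : U \cap U_{++} \cap \alpha^{-1}(U)]$. By Remark~\ref{rem:rel_gamma_++_gamma} this equals $[U : U \cap \alpha^{-1}(U)]$, and by Theorem~\ref{thm:tidy_minimizing} the latter equals $s(\alpha)$; hence the total valency is $s(\alpha) + 1$.

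For (ii), let $\omega$ be the end of $T$ represented by the ray $(v_0, v_1, v_2, \ldots)$, i.e.\ the direction of increasing indices along the central line $(v_i)_{i \in \bbZ}$. Lemma~\ref{lem:precise_preimage} together with the ascending-union description $U_{++} = \bigcup_i \alpha^i(U_+)$ shows that each $u \in U_{++}$ lies in $\alpha^i(U) \cap U_{++}$ for all sufficiently large $i$, so $u$ pointwise fixes a tail of the $\omega$-ray and therefore fixes $\omega$; and $\varphi(\alpha) = \rho^{-1}$ translates $v_i \mapsto v_{i+1}$ along the same ray, so fixes $\omega$ too. For transitivity on $\partial T \setminus \{\omega\}$, each such end is represented by a descending (out-edge) ray starting at some $v_i$; apply $\varphi(\alpha)^i = \rho^{-i}$ to move the starting vertex to $v_0$, then use transitivity of $U_+$ on arcs in $\Gamma_+$ from $v_0$ of a given length (which follows from Lemma~\ref{lem:tidy_above_transitive} together with $U = U_+U_-$ and the fact that $U_-$ pointwise fixes every descending ray from $v_0$) and extend from arcs to rays by a compactness argument over $U_+$.

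For (iii), set $K = \ker \varphi$. Since $\rho^{-n}$ is a non-trivial translation for every $n \ge 1$, no element of $S \setminus U_{++}$ acts trivially, so $K \le U_{++}$; then $K = \bigcap_{i \in \bbZ}(\alpha^i(U) \cap U_{++})$ is a stabilizer, hence compact (contained in $U_+$), normal in $U_{++}$, and $\alpha$-invariant (using Lemma~\ref{lem:precise_preimage} and surjectivity of $\alpha$ on $U_{++}$ to handle the $\alpha$-preimage step). Conversely, given any compact normal $\alpha$-invariant $N \unlhd U_{++}$, compactness of $N$ in the ascending union $U_{++} = \bigcup_i \alpha^i(U_+)$ of open compact subgroups gives $N \le \alpha^I(U_+)$ for some $I$, so $N$ pointwise fixes $v_I$; normality combined with $U_{++}$-transitivity on the level set $V_I$ upgrades this to pointwise fixing of all of $V_I$, and the conjugation relation $\rho^{-1}\varphi(u)\rho = \varphi(\alpha(u))$ from Lemma~\ref{lem:tree_rep_cont}, together with $\alpha(N) = N$, propagates the $N$-fixed set across every horocycle of $T$, forcing $N \le K$. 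For (iv), Lemma~\ref{lem:precise_preimage} directly gives that every $\varphi(u)$ with $u \in U_{++}$ fixes some $v_j$ and is therefore elliptic, while each $\varphi((u, \alpha^n))$ with $n \ge 1$ factors as $\varphi(u)\rho^{-n}$, a composite of an elliptic fixing a tail of the $\omega$-ray with a translation of length $n$ along that ray, which acts as a translation of length $n$ on $T$ and is therefore hyperbolic. The main obstacle I anticipate is the propagation step in (iii): carefully using $\alpha$-invariance of $N$ and the conjugation relation to extend the $N$-fixed locus downward from $V_I$ through every level of the tree.
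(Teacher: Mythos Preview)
Your outline follows the paper's proof closely: the same tree $T=\overline{\Gamma}_{++}$, the same $\varphi$ from Lemma~\ref{lem:tree_rep_cont}, and the same decomposition into the cases $s(\alpha)>1$ and $s(\alpha)=1$. Parts (i) and (iv) match the paper essentially verbatim, and your argument for $N\le K$ in (iii) via the ascending union $U_{++}=\bigcup_i\alpha^i(U_+)$ is a clean variant of the paper's appeal to \cite[Theorem~5.2]{FTN91}.

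Two points in (iii) need attention. First, the formula $K=\bigcap_{i\in\bbZ}(\alpha^i(U)\cap U_{++})$ describes only the pointwise stabiliser of the central line $\{v_i\}_{i\in\bbZ}$, not of the whole tree; a priori this could be strictly larger than $\ker\varphi$. The containment $K\subseteq\bigcap_i(\alpha^i(U)\cap U_{++})$ is what you actually need for compactness, and that is immediate. Second, and more substantively, showing $K\subseteq\alpha(K)$ needs more than Lemma~\ref{lem:precise_preimage} and surjectivity of $\alpha$ on $U_{++}$. Given $u_0\in K$ and any preimage $u_1\in U_{++}$ with $\alpha(u_1)=u_0$, to show $u_1$ fixes an arbitrary vertex $w\alpha^i(U)$ you use Lemma~\ref{lem:precise_preimage} to produce $x_1\in\alpha^i(U)\cap U_{++}$ with $\alpha(wx_1w^{-1})=u_0$, but then $u_1$ and $wx_1w^{-1}$ differ by an element of $\ker(\alpha|_{U_{++}})$. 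The paper absorbs this discrepancy via $\bik(\alpha)$: it is a compact normal $\alpha$-stable subgroup of $U_{++}$ contained in $U$ (hence in every $\alpha^j(U)$), so by the half of (iii) you have already proved it lies in $K$, and therefore $u_1\in wx_1w^{-1}\bik(\alpha)\subseteq w\alpha^i(U)w^{-1}$.

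Finally, a directional slip in (ii): $\varphi(\alpha)^i=\rho^{-i}$ sends $v_j$ to $v_{j+i}$, so it does not move a ray starting at $v_i$ to one starting at $v_0$. The paper's reduction runs the other way: an arbitrary end $\omega'$ is written as starting at some $u_kv_k$, and one observes that mapping $-\omega$ to $\omega'$ by an element of $\varphi(S)$ reduces (via $u_k^{-1}\rho^k$) to finding $u\in U_+$ taking $-\omega$ to an end based at $v_0$; this last step is exactly the $U_+$-arc-transitivity plus compactness argument you describe.
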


\begin{proof}
First, assume $s(\alpha)>1$. Let $T$ be the undirected graph underlying $\overline{\Gamma}_{++}$, i.e. the graph with vertex set $V(\overline{\Gamma}_{++})$ and edge-relation the symmetric closure of $E(\overline{\Gamma}_{++})\subseteq V(\overline{\Gamma}_{++})\times V(\overline{\Gamma}_{++})$. The continuous semigroup homomorphism $\varphi$ from $S$ to $\Aut(\overline{\Gamma}_{++})$ defined above induces a continuous semigroup homomorphism $S\to \Aut(T)$ for which we use the same letter.

Part (i) is now immediate from the fact that every vertex in $\overline{\Gamma}_{++}$ has out-valency $s(\alpha)$ and in-valency $1$.

For part (ii), let $\omega\in\partial T$ be the end associated to the sequence $(v_i)_{i\in\bbN_{0}}$. Then $\varphi(\alpha)=\rho^{-1}$ fixes $\omega$ by definition. If $u\in U_{++}$, then there exists an $\alpha$-regressive trajectory for $u$ eventually contained in $U$. That is $u\in\alpha^{n}(U)$ for all sufficiently large $n\in\bbN$ whence $uv_{n} = v_{n}$ for sufficiently large $n$. This shows that $u\omega=\omega$. Overall, we conclude $\varphi(S)\omega=\omega$. 

Now consider the end $-\omega\in\partial T$ associated to the sequence $(v_{-i})_{i\in\bbN_{0}}$ and let $\omega'\in\partial T$ be an end distinct from $\omega$. Because $T$ is a regular tree with constant in-valency $1$ and $V(\overline{\Gamma}_{++})=\{uv_{i}\mid u\in U_{++},\ i\in\bbZ\}$, the end $\omega'$ is represented by $(u_{k-i}v_{k-i})_{i\in\bbN_{0}}$ for some $k\in\bbZ$ and some sequence $(u_{k-i})_{i\in\bbN_{0}}$ in $U_{++}$. Then $u_{k}^{-1}\rho^{k}\omega'$ is an end $\omega''\in\partial T$ represented by a sequence originating from $v_{0}$ and it suffices to show that there is an element $u\in U_{++}$ which maps the sequence of $-\omega$ to that of $\omega''$. Since $U\cap U_{++}=U_{+}$ by Lemma \ref{lem:tidy_below} as $U$ is tidy, the set $U\cap U_{++}$ is a descending intersection of compact subgroups and so is compact. We finish by applying Lemma \ref{lem:basic_arc_trans} and by picking a convergent subsequence inside $U\cap U_{++}$.

As to (iii), let $N$ be any compact normal subgroup of $U_{++}$ with $\alpha(N)=N$. We show that $N\!\le\! \ker \varphi$ before showing that $\alpha(\ker(\varphi)) = \ker(\varphi)$. By \cite[Theorem 5.2]{FTN91}, $\varphi(N)\le\Aut(\overline{\Gamma}_{++})_{v}$ for some $v\in V(\overline{\Gamma}_{++})$ because $\varphi(N)$ is compact by Lemma~\ref{lem:tree_rep_cont} and acts without inversions. Since $N$ is normal in $U_{++}$, we obtain
\begin{displaymath}
\varphi(N)=\varphi(u)\varphi(N)\varphi(u)^{-1}\le\varphi(N)\cap\Aut(\overline{\Gamma}_{++})_{\varphi(u)v}\le\Aut(\overline{\Gamma}_{++})_{v,\varphi(u)v}
\end{displaymath}
for all $u\in U_{++}$. Similarly, given that $\alpha(N)=N$ we have
\begin{align*}
\varphi(N)=\varphi(\alpha(N))&\varphi(\alpha)\varphi(\alpha)^{-1}=\varphi(\alpha(N)\circ\alpha)\varphi(\alpha)^{-1} \\
&=\varphi(\alpha\circ N)\varphi(\alpha)^{-1}=\rho^{-1}\varphi(N)\rho\le\Aut(\overline{\Gamma}_{++})_{v,\rho^{-1}(v)}.
\end{align*}
as well as 
\begin{align*}
\varphi(N)=\varphi(\alpha)^{-1}&\varphi(\alpha)\varphi(N)=\varphi(\alpha)^{-1}\varphi(\alpha\circ N) \\
&=\varphi(\alpha)^{-1}\varphi(\alpha(N))\varphi(\alpha)=\rho\varphi(N)\rho^{-1}\le\Aut(\overline{\Gamma}_{++})_{v,\rho(v)}.
\end{align*}
As a consequence, $\varphi(N)$ fixes every vertex in the orbit of $v$ under the action of the group generated by $\varphi(S)$. This group acts vertex-transitively as it contains $\varphi(U_{++})$ and both $\rho$ and $\rho^{-1}$. This shows that $\varphi(N)$ fixes $T$, i.e. $N\le\ker(\varphi)$.

The kernel of $\varphi$ consists of those elements $s\in S$ such that $\varphi(s)$ fixes every vertex of $T$. Note that $\ker(\varphi)\le U_{++}\le S$ by Lemma \ref{lem:distinct_vertices}, so
\begin{displaymath}
  \ker(\varphi)=U_{++}\cap\bigcap_{i\in\bbZ}\bigcap_{u\in U_{++}}u\alpha^{i}(U)u^{-1}.
\end{displaymath}
In particular, $\ker(\varphi)$ is compact and $\alpha(\ker(\varphi))\le\ker(\varphi)$ as $\alpha(U_{++})=U_{++}$. To see that $\alpha(\ker(\varphi)) = \ker(\varphi)$, suppose $u_0\in\ker(\varphi)$ and choose $u_1\in U_{++}$ such that $\alpha(u_1) = u_0$. Such a $u_1$ exists as $u_0\in U_{++}$. We will show $u_1\in \ker(\varphi)$. Let $w\in U_{++}$ and $i\in\bbZ$. Then $u_0\in \alpha(w)\alpha^{i+1}(U)\alpha(w)^{-1}$ as $u_0\in\ker(\varphi)$. Choose $x_{0}\in\alpha^{i+1}(U)$ such that $u_0 = \alpha(w)x_{0}\alpha(w^{-1})$. Then $x_{0}\in U_{++}\cap\alpha^{i+1}(U)$ as $u_{0}$, $\alpha(w)$, $\alpha(w^{-1})\in U_{++}$. Lemma \ref{lem:precise_preimage} gives $x_{1}\in\alpha^{i}(U)\cap U_{++}$ such that $\alpha(x_{1})=x_{0}$. We obtain $u_1\bik(\alpha) = wx_{1}w^{-1}\bik(\alpha)$. However, $\alpha(\bik(\alpha)) = \bik(\alpha)$ and $\bik(\alpha)$ is compact and normal in $U_{++}$. We have already shown that such a subgroup is contained in $\ker(\varphi)\le w\alpha^{i}(U)w^{-1}$, hence $u_1\in w\alpha^{i}(U)w^{-1}$. As our choice of $w\in U_{++}$ and $i\in\bbZ$ was arbitrary, $u_1\in\ker(\varphi)$.

For part (iv), write $s=(u,\alpha^{k})$ $(u\in U_{++},\ k\in\bbN)$ for elements of $S$. Given that $\varphi(\alpha)=\rho^{-1}$, we necessarily have $k=0$ in order for $\varphi(s)$ to fix a vertex, so $s\in U_{++}$. Conversely, every element $u\in U_{++}$ is contained in $\alpha^{n}(U)$ for all sufficiently large $n\in\bbN$, so $\varphi(u)$ fixes $v_{n}$ for the same values of $n$.

Now, assume $s(\alpha) = 1$. Then $\alpha(U_+) = U_+$ by Lemma \ref{lem:coset_calculations}. This shows that $U_{++} = U_{+}$ is a compact subgroup with $\alpha(U_{++}) = U_{++}$. Let $T$ be the (undirected) tree with vertex set $\bbZ$ and $i,j\in V(T)$ connected by an edge whenever $|i-j|=1$. Define $\varphi:S\to\Aut(T)$ by setting $\varphi(\alpha)$ to be the translation of length $1$ in the direction of $\omega:=(i)_{i\in\bbN_{0}}\in\partial T$, and $\varphi(u)$ to be the identity automorphism of $T$ for all $u\in U_{++}$. Then $\varphi$ satisfies all the conclusions of Theorem \ref{thm:tree_rep_thm}. 
\end{proof}

\begin{remark}
The action in Theorem \ref{thm:tree_rep_thm} relates to Theorem \ref{thm:tree_rep_bw} in the following manner: Results from \cite[Section 9]{Wil15} show that if $U$ is tidy for $\alpha$, then $\bik(\alpha)\unlhd U_{++}$ and the endomorphism $\overline{\alpha}$ of $U_{++}/\bik(\alpha)$ induced by $\alpha|_{U_{++}}$ is an automorphism. Let $q:U_{++}\to U_{++}/\bik(\alpha)$ be the quotient map. Then $q(U_+)$ is tidy for $\overline{\alpha}$, $(q(U_{+}))_{++} = q(U_{++})$ and $s(\overline{\alpha}) = s(\alpha)$. Extend $q$ to a semigroup homomorphism from $S$ to $q(U_{++})\rtimes\langle \overline{\alpha}\rangle$ by setting $q(\alpha) = \overline{\alpha}$. Also, let $\varphi: S\to\Aut(T)$ be as in Theorem \ref{thm:tree_rep_thm} and $\varphi': q(U_{++})\rtimes\langle \overline{\alpha}\rangle \to T'$ as in Theorem \ref{thm:tree_rep_bw}. Then there exists a graph isomorphism $\psi:T'\to T$ such that the diagram
\begin{displaymath}
 \xymatrix{
  S \ar[r]^-{\varphi} \ar[d]_{q} & \Aut(T) \\
  q(U_{++})\rtimes\langle\overline{\alpha}\rangle \ar[r]_-{\varphi'} & \Aut(T') \ar[u]_{\widetilde{\psi}},
 }
\end{displaymath}
where $\smash{\widetilde{\psi}}$ is conjugation by $\psi$, commutes.
\end{remark}

\section{New Endomorphisms From Old}
\label{sec:new_from_old}
We conclude with a construction that produces new endomorphisms of totally disconnected, locally compact groups from old, inspired by \cite[Example 5]{Wil15}.

Let $G_{1}$ and $G_{2}$ be totally disconnected compact groups. Assume that there are isomorphisms $\varphi_{i}:G_{i}\to H_{i}$ ($i\in\{1,2\}$) of $G_{i}$ onto compact open subgroups $H_{i}\le G_{i}$. Consider the HNN-extension $G$ of $G_{1}\times G_{2}$ which makes the isomorphic subgroups $H_{1}\times G_{2}\cong G_{1}\times G_{2}\cong G_{1}\times H_{2}$ conjugate:
\begin{displaymath}
 G:=\langle G_{1}\!\times\! G_{2},t\mid\{t^{-1}(h_{1},g_{2})t=(\varphi_{1}^{-1}(h_{1}),\varphi_{2}(g_{2}))\mid (h_{1},g_{2})\in H_{1}\!\times\! G_{2}\}\rangle.
\end{displaymath}
Set $U:=G_{1}\times G_{2}\le G$. Given that $G$ commensurates $U$, it admits a unique group topology which makes the inclusion of $U$ into $G$ continuous and open, see \cite[Chapter III, \S 1.2, Proposition 1]{Bou98}. Then $G$ is a non-compact t.d.l.c. group which contains $U\!:=\!G_{1}\!\times\! G_{2}$ as a compact open subgroup. Define $\beta\in\End(G)$ by setting $\beta(t)=t$ and $\beta(g_{1},g_{2})=(\varphi_{1}(g_{1}),g_{2})$ for all $(g_{1},g_{2})\in G_{1}\times G_{2}$. Then
\begin{displaymath}
  \beta(t^{-1}(h_{1},g_{2})t)=t^{-1}(\varphi_{1}(h_{1}),g_{2})t=(h_{1},g_{2})=\beta(\varphi_{1}^{-1}(h_{1}),g_{2}).
\end{displaymath}
for all $(h_{1},g_{2})\in H_{1}\times G_{2}$ and hence $\beta$ indeed extends to $G$. Note that $\beta$ is continuous: Let $V\le G$ be open. Then so is $V\cap(H_{1}\cap G_{2})$ and $\beta^{-1}(V)\supseteq\beta^{-1}(V\cap(H_{1}\cap G_{2}))\cap U$ which is open in $U$ and therefore in $G$ since $\varphi_{1}$ is continuous. Observe that $s(\beta)=1$ as $\beta(U)\le U$. Let $\alpha:=c_{t}\circ\beta\in\End(G)$ where $c_{t}:G\to G,\ g\mapsto tgt^{-1}$ is conjugation by $t$. For $(g_{1},h_{2})\in G_{1}\!\times\! H_{2}$ we have
\begin{equation}
 \alpha(g_{1},h_{2})=t\beta(g_{1},h_{2})t^{-1}=t(\varphi_{1}(g_{1}),h_{2})t^{-1}=(\varphi_{1}^{2}(g_{1}),\varphi_{2}^{-1}(h_{2}))
 \tag{E}
 \label{eq:def_alpha}
\end{equation}

\begin{lemma}\label{lem:hnn_tidy}
Retain the above notation. Then $U$ is tidy for $\alpha$ and $s(\alpha)\! =\! [G_2:H_2]$.
\end{lemma}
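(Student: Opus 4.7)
The plan is to apply Lemma~\ref{lem:tidy_powers}: it suffices to show that $[U:U\cap\alpha^{-n}(U)] = [G_2:H_2]^n$ for every $n\in\bbN$. Once tidiness of $U$ is established, Theorem~\ref{thm:tidy_minimizing} combined with the identity $|\out_{\Gamma_+}(v_0)| = [U:U\cap\alpha^{-1}(U)]$ from Remark~\ref{rem:rel_gamma_++_gamma} immediately yields $s(\alpha) = [G_2:H_2]$.

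The main computation is that $U\cap\alpha^{-n}(U) = G_1 \times \varphi_2^n(G_2)$. Since $\beta(t)=t$, the endomorphism $\beta$ commutes with conjugation $c_t$, hence $\alpha^n = c_{t^n}\circ\beta^n$ and
\[
\alpha^n(g_1,g_2) = t^n(\varphi_1^n(g_1),g_2)t^{-n}
\]
for $(g_1,g_2)\in G_1\times G_2$. Because $\varphi_1^n(g_1)\in G_1$ automatically, the condition $\alpha^n(g_1,g_2)\in U$ reduces to $(\varphi_1^n(g_1),g_2)\in U_{(n)} := t^{-n}Ut^n\cap U$, a subgroup of $U$ depending only on $n$. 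I would compute $U_{(n)}$ by induction. The base case $U_{(1)}=G_1\times H_2$ follows from Britton's Lemma applied to the HNN extension $G$: for $x=(g_1,g_2)\in U$ with $g_2\notin H_2$, the word $txt^{-1}$ is reduced and therefore cannot represent an element of $U$, while for $x\in G_1\times H_2$ the defining relation gives $txt^{-1}\in H_1\times G_2\subseteq U$. For the inductive step, for $x=(k_1,k_2)\in U_{(1)}$ we have $txt^{-1}=(\varphi_1(k_1),\varphi_2^{-1}(k_2))$, so $x\in U_{(n)}$ if and only if $txt^{-1}\in U_{(n-1)}$; the induction then yields $U_{(n)}=G_1\times\varphi_2^n(G_2)$.

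Combining, $[U:U\cap\alpha^{-n}(U)] = [G_2:\varphi_2^n(G_2)]$. Since $\varphi_2\colon G_2\to H_2$ is an isomorphism which restricts to an isomorphism $\varphi_2^{k}(G_2)\to\varphi_2^{k+1}(G_2)$ for every $k\ge 0$, a telescoping product gives
\[
[G_2:\varphi_2^n(G_2)] = \prod_{k=0}^{n-1}[\varphi_2^k(G_2):\varphi_2^{k+1}(G_2)] = [G_2:H_2]^n,
\]
which closes the argument. The main obstacle is the application of Britton's Lemma to establish $U_{(1)}=G_1\times H_2$ and, more generally, to rule out unexpected cancellations in higher iterations; everything after the HNN bookkeeping is a routine induction together with an elementary index calculation.
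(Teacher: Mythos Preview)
Your proposal is correct and follows essentially the same route as the paper: both arguments invoke Lemma~\ref{lem:tidy_powers}, establish $U\cap\alpha^{-n}(U)=G_1\times\varphi_2^{n}(G_2)$ via Britton's Lemma, and finish with the same telescoping index computation and Theorem~\ref{thm:tidy_minimizing}. Your factorisation $\alpha^{n}=c_{t^{n}}\circ\beta^{n}$ is a tidy repackaging of the paper's direct iteration of equation~\eqref{eq:def_alpha}; just note that your inductive step, as written, only treats $x\in U_{(1)}$, so you still need the higher-power Britton argument (which you flag) to see that $U_{(n)}\subseteq U_{(1)}$ and close the induction.
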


\begin{proof}
We proceed via Lemma \ref{lem:tidy_powers}. First, we show that $\alpha^{-n}(U)\!\cap\! U\! =\! G_1\!\times\!\varphi_2^{n}(G_2)$. The inclusion $ G_1\times \varphi_2^{n}(G_2)\le \alpha^{-n}(U)\cap U$ follows from equation \eqref{eq:def_alpha}. Suppose $g\not\in G_1\times \varphi_2^{n}(G_2)$. We will show $g\not\in \alpha^{-n}(U)\cap U$. If $g\not\in U$, then we are done and so we may write $g = (g_1,g_2)\in G_1\times (G_2\setminus \varphi_2^{n}(G_2))$. By equation \eqref{eq:def_alpha}, there exists $0\le m < n$ such that $\alpha^{m}(g_1,g_2)\in G_1\times (G_2\setminus H_2)$. We therefore show that $\alpha^{l}(g'_{1},g'_{2})\not\in U$ for all $l\in\bbN$ whenever $(g'_1,g'_{2})\in G_1\times( G_{2}\backslash H_{2})$. Indeed, $\alpha^{l}(g_{1},g_{2})=t^{l}(\varphi_{1}^{l}(g_{1}),g_{2})t^{-l}$ is not contained in $U$: If $t^{l}(\varphi_{1}^{l}(g_{1}),g_{2})t^{-l}=(g_{1}',g_{2}')\in U$, then
\begin{displaymath}
t\cdots t(\varphi_{1}^{l}(g_{1}),g_{2})t^{-1}\cdots t^{-1}(g_{1}'^{-1},g_{2}'^{-1})=1, 
\end{displaymath}
contradicting Britton's Lemma on words in HNN-extensions, see \cite[Lemma 4]{Bri63} or \cite[Theorem 2.1]{LS15}.

We have shown that $\alpha^{-n}(U)\cap U = G_1\times \varphi_2^{n}(G_2)$. Since $\varphi_2^{n}(G_2)$ is a nested series of subgroups for $n\in\bbN$, we have
\begin{align*}
[U:U\cap \alpha^{-n}(U)] &= [G_1\times G_2:G_1\times \varphi_2^{n}(G_2)] = [G_2:\varphi_2^{n}(G_2)]\\ 
&= \prod_{i = 0}^{n - 1}[\varphi_2^{i}(G_2):\varphi_{2}^{i+1}(G_2)] = [G_2:H_2]^{n}.
\end{align*}
Lemma \ref{lem:tidy_powers} shows that $U$ is tidy. By Theorem \ref{thm:tidy_minimizing}, we have
\begin{displaymath}
  s(\alpha)=[U:U\cap\alpha^{-1}(U)]=[G_{1}\!\times\! G_{2}:G_{1}\!\times\! H_{2}]=[G_{2}:H_{2}]. \qedhere
\end{displaymath}
\end{proof}

\bibliographystyle{plain}
\bibliography{willis_theory_graphs}

\end{document}